\documentclass[10pt,final,twocolumn]{IEEEtran}
%

\usepackage{graphicx}
\usepackage{tabularx}


\usepackage{algorithmic}
\usepackage{url}
\usepackage{verbatim}
\usepackage{amsmath} 
\usepackage{amssymb}  
\usepackage{amsfonts}
\usepackage{acronym}
\usepackage{multirow}
\usepackage[normalem]{ulem}
\usepackage{graphicx}
\usepackage{epsfig}
\usepackage{cite}
\usepackage{algorithm}
\usepackage{verbatim}
\usepackage[dvipsnames,usenames]{xcolor}
\usepackage{multicol}

\def\grad{\nabla}
\def\noi{\noindent}

\def\be{\mathbf{e}}

\def\bk{\mathbf{k}}

\def\bx{\mathbf{x}}  

\def\L{{\boldsymbol{\Lambda}}}

\def\cB{\mathcal{B}}
\def\cC{\mathcal{C}}
\def\cD{\mathcal{D}}
\def\cE{\mathcal{E}}

\def\cG{\mathcal{G}}
\def\cH{\mathcal{H}}
\def\cI{\mathcal{I}}

\def\cK{\mathcal{K}}
\def\cL{\mathcal{L}}

\def\cN{\mathcal{N}}
\def\cO{\mathcal{O}}
\def\cP{\mathcal{P}}

\def\cS{\mathcal{S}}
\def\cT{\mathcal{T}}

\def\cV{\mathcal{V}}

\def\smskip{\smallskip}

\def\texitem#1{\par\smskip\noindent\hangindent 25pt
               \hbox to 25pt {\hss #1 ~}\ignorespaces}


\def\norm#1{\left\|#1\right\|}

\newcommand{\BEAS}{\begin{eqnarray*}}
\newcommand{\EEAS}{\end{eqnarray*}}
\newcommand{\BEA}{\begin{eqnarray}}
\newcommand{\EEA}{\end{eqnarray}}
\newcommand{\BEQ}{\begin{eqnarray}}
\newcommand{\EEQ}{\end{eqnarray}}
\newcommand{\BIT}{\begin{itemize}}
\newcommand{\EIT}{\end{itemize}}
\newcommand{\BNUM}{\begin{enumerate}}
\newcommand{\ENUM}{\end{enumerate}}

\newcommand{\BA}{\begin{array}}
\newcommand{\EA}{\end{array}}


\newcommand{\reals}{\mathbb{R}}
\newcommand{\integers}{\mathbb{Z}}






\newcommand{\argmin}{\mathop{\rm argmin}}
\newcommand{\argmax}{\mathop{\rm argmax}}

\newcommand{\dom}{\mathop{\bf dom}}






\newif\ifpagenumbering
\pagenumberingtrue

\pagenumberingfalse

%
%
\newsavebox{\theorembox}
\newsavebox{\lemmabox}
\newsavebox{\corollarybox}
\newsavebox{\propositionbox}
\newsavebox{\remarkbox}
\newsavebox{\assbox}
\newsavebox{\condbox}
\savebox{\theorembox}{\noindent\bf Theorem}
\savebox{\lemmabox}{\noindent\bf Lemma}
\savebox{\corollarybox}{\noindent\bf Corollary}
\savebox{\propositionbox}{\noindent\bf Proposition}
\savebox{\remarkbox}{\noindent\bf Remark}
\savebox{\assbox}{\noindent\bf Assumption}
\savebox{\condbox}{\noindent\bf Condition}
\newtheorem{condition}{\usebox{\condbox}}
\newtheorem{assumption}{\usebox{\assbox}}
\newtheorem{theorem}{\usebox{\theorembox}}
\newtheorem{lemma}{\usebox{\lemmabox}}
\newtheorem{remark}{\usebox{\remarkbox}}
\newtheorem{corollary}{\usebox{\corollarybox}}
\newtheorem{proposition}[theorem]{\usebox{\propositionbox}}


\newcommand{\qed}{\mbox{}\hspace*{\fill}\nolinebreak\mbox{$\rule{0.7em}{0.7em}$}}

\newenvironment{proof}{\par{\noi \bf Proof: }}{\(\qed\) \par}



\def\fskip#1{}

\newcommand{\Real}{\ensuremath{\mathbb{R}}}
\def\half  {{\textstyle{1\over 2}}}

\def\Kscr{{\cal K}}

\newcommand{\thetahat}{\widehat{\theta}}

\def\be{\begin{enumerate}}
\def\ee{\end{enumerate}}

\newcommand{\pmat}[1]{\begin{pmatrix} #1 \end{pmatrix}}

        \def\bkE{{\rm I\kern-.17em E}}
		\def\bk1{{\rm 1\kern-.17em l}}
		\def\bkD{{\rm I\kern-.17em D}}
		\def\bkR{{\rm I\kern-.17em R}}
		\def\bkP{{\rm I\kern-.17em P}}
		\def\bkY{{\bf \kern-.17em Y}}
		\def\bkZ{{\bf \kern-.17em Z}}

\def\L{\mathcal{L}}

\def\k{\kappa}

\def\xbar{\bar{x}}

\def\R{\mathbb{R}}

\def\ineq{\preceq_{\Kscr}}
\def\fprod#1{\left\langle#1\right\rangle}

\newcommand{\sa}[1]{{\color{black}#1}}

\newcommand\rout{\bgroup\markoverwith{\textcolor{red}{\rule[0.5ex]{2pt}{0.4pt}}}\ULon}
\ifCLASSINFOpdf
\else
\fi
\hyphenation{op-tical net-works semi-conduc-tor}

\begin{document}
%
\title{On the analysis of inexact augmented Lagrangian schemes for misspecified conic convex programs}


\author{\IEEEauthorblockN{
N. S. Aybat,  H.~Ahmadi, and U.~V.~Shanbhag\IEEEauthorrefmark{1}}\thanks{All the authors are at Ind. \& Manuf. Engg. at The Penn.
	State University, PA, USA and are reachable at  {\tt nsa10,udaybag@psu.edu} and {\tt
		ahmadi.hesam@gmail.com}. Their
		research was partially supported by NSF CMMI-1400217 and NSF CMMI-1246887 (CAREER, Shanbhag).}}

\maketitle
\begin{abstract}
We consider {the} misspecified optimization problem \sa{of minimizing a} 
convex function $f(x;\theta^*)$ in $x$ over a {conic} constraint set
represented by $h(x;\theta^*) \in \cK$, where $\theta^*$ is an unknown (or
misspecified) vector of parameters, $\cK$ is a 
\sa{closed convex} cone and
$h$ is affine in $x$. Suppose $\theta^*$ is unavailable but may be learnt by a
separate process that generates a sequence of estimators $\theta_k$, each of
which is an increasingly accurate approximation of $\theta^*$.  We develop a
first-order inexact augmented Lagrangian~(AL) scheme for computing an optimal
solution $x^*$ corresponding to $\theta^*$ while simultaneously learning
$\theta^*$. In particular, we derive rate statements for such schemes when the
penalty parameter {sequence} is either constant or increasing and derive bounds
on the overall complexity in terms of proximal-gradient steps when
AL~subproblems are \sa{inexactly} solved via an accelerated proximal-gradient scheme.
Numerical results for a portfolio optimization problem with a misspecified
covariance matrix suggest that these schemes perform well in practice {while} 
naive sequential schemes  may perform poorly in comparison.
\end{abstract}


%
\IEEEpeerreviewmaketitle

\section{Introduction}
Consider an
optimization problem in $n$-dimensional {Euclidean} space 
defined as follows: \vspace*{-1mm}
\begin{equation}
{\cal C}(\theta^*):\qquad X^*(\theta^*)\triangleq\argmin_{x \in X \cap {\cal H}(\theta^*)} \ f(x;\theta^*),
\vspace*{-1mm}
\end{equation}
where $\theta^*\in\R^d$ denotes the \sa{unknown} parametrization of the objective and
constraints.  Traditionally, optimization {research} has considered
settings where $\theta^*$ is available a priori, {{in contrast with} robust and stochastic optimization approaches.}

\noindent {\bf Robust {optimization}:} 
Suppose $\theta^*$ is unavailable, but one has access to an uncertainty set ${\cal T}$.
{\emph{Robust optimization} \sa{(RO)} 
considers {minimizing} $f(\cdot;\theta)$ {over} the worst-case realization of
$\theta\in{\cal T}$, i.e.,}
\vspace*{-1mm}
\begin{equation}
\min_{x \in X } \{\max_{ \theta \in {\cal T}} \ f(x;\theta):\ x\in \cH(\theta),~\forall\theta\in\cT\}. 
\end{equation}
{
This has proven {to be} useful 
	in resolving 
problems in design, control~\cite{burger14polyhedral,bertsimas07constrained}, and optimization~\cite{bental09robust}. \sa{That said, {RO} is known to produce conservative solutions and is {intractable} in general, e.g., \emph{not tractable} when $\cH(\theta) { \ \triangleq \ } \{x:\ h(x;\theta)\preceq_{\mathcal{K}}\mathbf{0},~\forall\theta\in\mathcal{T}\}$,  {where} $\mathcal{K}$ is a general closed and convex cone~\cite{Bertsimas2006}.}

\noindent {\bf Stochastic optimization} \sa{(SO)} techniques~\cite{shapiro09lectures,jie18stochastic,wilson18adaptive} assume 
the parameter $\theta$ 
follows \sa{a \emph{prescribed} probability
distribution $\mathcal{D}$, i.e., $\theta\sim\mathcal{D}$}. The 
risk-neutral focus \sa{in (SO)} lies in computing 
\begin{align}
 &\min_{x\in X}\{\sa{\mathbb{E}_{\cD}}[f(x;\theta)]: \sa{\mathbb{E}_{\cD}}[h(x;\theta)]\preceq_{\mathcal{K}}\mathbf{0}\}. \label{eq:expectation}
 \end{align}
Suppose $X$ is convex, $f(.;\theta)$ and $h(.;\theta)$ are convex differentiable for all $\theta\in\Theta$ and an unbiased oracle is given that produces $\nabla_x f(x;\theta)$ and $\nabla_x h(x;\theta)$ {under suitable bias and moment assumptions}, then an $\epsilon$-solution to \eqref{eq:expectation} can be computed in $\mathcal{O}(1/\epsilon^2)$ projected gradient steps~\cite{jiang16variational}.
\sa{Alternatively, the constraint in \eqref{eq:expectation} can be {approximated} by $\sa{\mathbb{P}_{\cD}}\big(\cS 
	(x) 
	\big)\geq 1-\tau$, where $S 
	(x)\triangleq\{\theta:\ h(x;\theta)\preceq_{\mathcal{K}}\mathbf{0}\}$ and $\tau\in(0,1)$; the resulting chance-constrained variant is intractable in general, and even {for} simple distributions ${\cD}$. ~\cite{shapiro09lectures}.}
		
Motivated by the increasing accessibility to data, we consider an alternate approach where $\theta$ has a
nominal or true value $\theta^*$ {computable} by solving a
suitably defined learning problem: \vspace*{-2.5mm}
{
\begin{equation}
\label{eq:learning-problem}
({\cal E}):\qquad \min_{\theta \in \Theta} \  \ell(\theta).
\vspace*{-0.5mm}
\end{equation}}%
\sa{$\theta^*$ may simply be defined as an optimal solution to some learning problem as in~\eqref{eq:learning-problem} due to its desirable statistical guarantees.}
Such problems routinely arise when $\theta^*$ is idiosyncratic to
the problem and may be learnt by the aggregation of data; instances arise when
attempting to learn covariance matrices associated with a collection of stocks,
efficiency parameters associated with machines on a supply line, or demand
parameters associated with a supply chain. A natural approach 
is to first estimate $\theta^*$ with high accuracy and then solve the parametrized
problem. Yet, in many
instances, this {\it sequential} approach cannot be adopted for at
	least two reasons:
\noindent \textbf{(i)} The learning problem can be {very large, \emph{precluding} a highly
accurate a priori parameter resolution in a reasonable time; hence, making it impractical to solve
the original problem, i.e., the decision maker may have to wait for a long time during the learning stage with \emph{no} availability of an estimate solution to $\cC(\theta^*)$;}
\noindent \textbf{(ii)} Unless the learning problem can be solved {\it exactly}
in finite time, sequential schemes are not asymptotically convergent and
can, at best, provide approximate solutions. Indeed, \sa{the error due to inexactly solving \eqref{eq:learning-problem}}
cascades into the resolution of the subsequent optimization problem.

{Accordingly, we consider 
designing schemes that generate 
$\{x_k,\theta_k\}$ such that
$\sa{\lim_k}\|\theta_k - \theta^*\|=0$, $\sa{\liminf_k} d_{X^*(\theta^*)}(x_k) =0$, 
where $\theta^*$ is the unique solution to (${\cal E}$), 
and $d_{\mathcal{X}}(x)\triangleq\min_{s\in \mathcal{X}} \|x-s\|$
is the distance 
of $x$ to a 
convex set $\cal X$.}

This work {draws inspiration from misspecified game-theoretic problems~\cite{Bischi08,Szidarovszky04} and builds on} {our} prior work that develops \emph{coupled} stochastic approximation schemes 
for stochastic optimization/variational inequality problems~\cite{jiang13solution,jiang16variational} and
Nash games~\cite{jiang16distributed}. Subsequently, these
statements were refined for deterministic optimization
problems~\cite{ahmadi14data} and extended to
misspecified Markov Decision Processes (MDPs)~\cite{jiang15data}. {This
	paper is inspired by the challenges arising from
	misspecification in the \sa{\it constraint set}. Such concerns are addressed
		in~\cite{ahmadi15misspecified,jiang13solution,jiang16distributed}
by considering the associated variational inequality problem, {while here}
we consider a more general form of
misspecified constraints (namely convex conic).}
{Rather than standard gradient-type approaches}, we develop an
augmented Lagrangian~(AL) scheme for misspecified
convex problems {where} both the objective and constraints are
misspecified, {and} the subproblems are solved with increasing exactness. {Such
models have particular relevance in the context of iterative learning control
(ILC), which has its roots in the research by Uchiyama~\cite{Uchiyama78} and Arimoto et
al.~\cite{Arimoto84}}.

Throughout, our focus will be on the problem ${\cal C}(\theta^*)$ 
with ${\cal H}(\theta^*) \triangleq \{ x: h(x;\theta^*) {\preceq_{\Kscr} {\mathbf{0}}}
\}$, {where $\preceq_{\Kscr}$ denotes the partial order induced by the
\sa{closed convex cone} $\cK$ in $\reals^m$, i.e., given $a,b\in\reals^m$,
		   $a\preceq_{\Kscr} b$ implies $b-a\in\cK$. Hence, ${\cal
			   H}(\theta^*)\equiv\{x:\ -h(x;\theta^*)\in \Kscr\}$.}
Consequently, we may redefine the problem in parametric form, ${\cal C}(\theta)$, 
{
\begin{equation}
\label{eq:parametric-opt-problem}
{\cal C}(\theta):\quad
\min_x\{f(x;\theta):\quad h(x;\theta)\preceq_{\Kscr} 0,\quad  x \in X\}, \vspace*{-1mm}
\end{equation}
}%
where $f:\R^n\times \Theta \to \R\cup\{+\infty\}$,
and $h:\R^n\times \Theta\to \R^m$ {such that $f(\cdot;\theta)$ is convex and $h(\cdot;\theta)$ is affine for all $\theta\in\Theta$}, {$\cal K$ is a 
\sa{closed convex cone} in {$\R^m$},
and $\theta \in \Theta \subseteq \R^d$ denotes the misspecified parameter.
Throughout, we assume that 
${\cal C}(\theta^*)$ has
a finite optimal value, denoted by $f^*$, {i.e., $f^*\triangleq f(x^*;\theta^*)$ for all $x^*\in X^*(\theta^*)$. Moreover, we also assume that} the corresponding Lagrangian dual problem has a solution, denoted by $\lambda^*$, and there is {\it no} duality gap. Before proceeding further, we recall that 
AL~schemes are rooted in the seminal work by Hestenes~\cite{hestenes1969multiplier}
and Powell~\cite{Powell69_1J}, and their relation to proximal-point methods was
established by Rockafellar~\cite{Rockafellar73_1J,rockafellar1973dual}.
{Based on the increasing role} of constrained optimization models in
control {systems and
engineering (cf.~\cite{bouyarmane18weight,freundlich18active,gu18explicit}),} there
has been a renewed examination of such techniques, particularly in convex
regimes, with an emphasis on deriving rate estimates,
e.g.,\cite{aybat2013augmented,lan15_1J,1506.05320 }.

{\bf Contributions:}
{\textbf{(i)} We provide a verifiable sufficient condition for the solution
set of AL subproblems to be upper-Lipschitz; this is of importance {in}
deriving rate statements. \textbf{(ii)} We proposed a parametric AL
method~(IPALM) and {derive} its convergence rate for constant and increasing
penalty parameter {sequences} -- these rates are {derived} for the first time; our
results subsume the {classical} results on AL method and extend them to the
parametric setting introduced in this paper. For the constant penalty case,
at most ${\cal O}(\epsilon^{-1})$ and ${\cal O}(\epsilon^{-4})$
proximal-gradient steps are required to obtain an $\epsilon$-feasible and
$\epsilon$-optimal solution without and with learning, respectively; while for
the increasing penalty case, we show the worst-case complexity reduces to
${\cal O}(\epsilon^{-1}\log(\epsilon^{-1}))$ regardless of whether a parallel
learning process is employed or not.} After having independently proven
iteration complexity statements for a constant penalty AL scheme in our
preliminary work~\cite{ahmadi16_ACC}, we became aware of related
work~\cite{1506.05320} considering inexact augmented Lagrangian schemes {\em
without} learning. {When $\theta^*$ is available, the complexity statements
provided in this manuscript and%
~\cite{ahmadi16_ACC} are related to those 
in~\cite{1506.05320}; though, our results are both novel and distinct from~\cite{1506.05320}.}

{{\bf Application:} {Before {providing} technical details, {we motivate} our problem setup with {an} example arising in multi-agent consensus optimization setting~\cite{aybat17_1J}.} Suppose $\cG=(\cN,\cE)$ denotes an undirected connected network of agents where $\cN$ denotes the set of agents with limited processing capability, and $\cE\subset\cN\times\cN$ 
such that agents $i,j\in\cN$ can directly communicate if and only if
$(i,j)\in\cE$. {The objective is to minimize the sum of 
(agent-specific) convex functions,} $\min_{x\in X}\sum_{i\in\cN}f_i(x)$ where
$X\subset\reals^n$ is a convex compact set, and each $f_i$ is a smooth convex
function \sa{that is 
only available to agent $i\in\cN$}. In this setting the agents are
collaborative, i.e., willing to cooperate to reach a consensus decision $x^*$;
but without sharing their private data, e.g., when $f_i(x)=\norm{A_ix-b_i}^2$,
{agents do not want to communicate $A_i$ or $b_i$ defining $f_i$ 
-- collecting large-scale $\{A_i\}_{i\in\cN}$ at a 
fusion center can be very costly and/or may violate privacy.} One way to deal with this problem is to reformulate it 
using a local variable 
$x_i\in\reals^n$ for each $i\in\cN$, and impose consensus among these local variables, i.e., $x_i=x_j$ for all $(i,j)\in\cE$:
\begin{equation}
\label{eq:consensus_formulation}
\cC(W):\ \min_{\substack{\bx=[x_i]_{i\in\cN}\\ x_i\in X\ \forall i\in\cN}}\{\sum_{i\in\cN} f_i(x_i):\ (W\otimes I_n)\bx=\mathbf{0}\}.
\end{equation}
The communication matrix $W$ is a design parameter. 
For algorithmic computations to rely only on local communications over $\cE$, one should choose $W\in\reals^{|\cN|\times|\cN|}$ such that $W_{ij}=0$ if $(i,j)\not\in\cE$. Moreover, to impose consensus one also needs the null space of $W$ 
to be spanned by $\mathbf{1}\in\reals^{|\cN|}$ vector of ones. Thus, $(W\otimes I_n)\bar{\bx}=\mathbf{0}$ implies that $\bar{\bx}=[\bar{x}_i]_{i\in\cN}$ satisfies $\bar{x}_i=\bar{x}$ for all $i\in\cN$ for some $\bar{x}\in\reals^n$. Choosing
$W_{ij}<0$ for $(i,j)\in\cE$ and $W_{ii}=-\sum_{j\in\cN_i}W_{ij}$ satisfies this requirement, where $\cN_i=\{j\in\cN:\ (i,j)\in\cE\ \hbox{or}\ (j,i)\in\cE\}$ denotes the neighbors of $i\in\cN$ on $\cG$. Note that one trivial choice for $W$ is the graph Laplacian, i.e., $\cL_{ii}=d_i$ is the degree of node $i\in\cN$, $\cL_{ij}=-1$ if $(i,j)\in\cE$, and $\cL_{ij}=0$ if $(i,j)\not\in\cE$. However, this choice is not necessarily the best one for quickly reaching consensus, and a better one may be learnt. Indeed, a better choice for $W$ on a {barbell} graph is to use a communication matrix $W^*$ defined by effective resistances~\cite{aybat17_globalsip}, and the distributed method in~\cite{aybat17_globalsip} generates $\{W_k\}$ linearly converging to $W^*$.}

{\bf Outline:} (i) After proving some technical results in Section~\ref{sec:prelim}, we {derive}
some rate statements in Section~\ref{sec:rate} for dual suboptimality, primal infeasibility and 
suboptimality for the prescribed coupled first-order scheme 
{and quantify} the impact of misspecification. 
In Section~\ref{sec:rate-constant} we consider a setting with a constant
penalty parameter, and in Section~\ref{sec:rate-increasing} we derive
analogous rate statements in a setting where the penalty parameter is
increased after each dual update. 
\noindent (ii) An overall iteration complexity analysis of the scheme is provided in Section~\ref{sec:complexity}.
(iii) Finally, in Section~\ref{sec:numerical}, we demonstrate the utility of the prescribed
scheme through a portfolio optimization problem with a
\emph{misspecified} covariance matrix.

\noindent{\bf Notation:} Let $\norm{.}$ denote the Euclidean norm, {and $\cB(\mathbf{0},1) = \{z : \|z\| \leq 1\}$}. Given a closed convex set $\cK\subset\reals^m$ and $y\in\reals^m$, 
{$d_{\mathcal{K}}(y)\triangleq\min_{s\in \mathcal{K}} \|y-s\|$,
$d^2_{\mathcal{K}}(y)\triangleq\left(d_{\mathcal{K}}(y)\right)^2$, and 
 $\Pi_{\mathcal{K}}(y)\triangleq\argmin_{s\in \cK} \|y-s\|$; hence,
	$d_{\mathcal{K}}(y)=\norm{y-\Pi_{\mathcal{K}}(y)}$. Moreover, it is
		easy to verify that $d^2_{\mathcal{K}}(.)$ is differentiable and
		its gradient $\grad
		d^2_{\mathcal{K}}(y)=2(y-\Pi_{\mathcal{K}}(y))$.} Let
		$\cB(\bar{y},r)\triangleq\{y:\ \|y-\bar{y}\|\leq r\}$. Given a
		cone $\cK\in\reals^m$, let $\cK^*$ denote its dual cone, i.e.,
		$\cK^*=\{y'\in\reals^m:\ \fprod{y',~y}\geq 0,\
			   \forall~y\in\cK\}$. {Given $A\in\reals^{m\times n}$, $\sigma_{\max}(A)$ denotes the largest singular value of
		$A$, and $\norm{A}\triangleq\sigma_{\max}(A)$ denotes the spectral norm of $A$.} For $x,y\in\reals^n$, $\fprod{x,y}\triangleq x^\top y$.
\vspace*{-3mm}

\begin{table*}[t]
{\scriptsize
\vspace{1mm}
\begin{center}
\begin{tabular}{p{1.5in}|p{1.5in}|p{1.5in}|p{1.5in}} \hline
   Formulation  &  Motivation and applications & Tractability &  Challenges  \\ \hline
   \vspace*{0.25mm}
   {\bf Robust Optimization:}
  \begin{flalign*}
  \min_{x \in X}\{\max_{\theta \in \mathcal{T}} f(x;\theta): x\in \cH(\theta), \forall \theta\in\cT\},\\
  \cH(\theta)\triangleq\{x:\ h(x;\theta)\preceq_{\mathcal{K}}\mathbf{0}\}
  \end{flalign*}
  &
  \vspace*{0.25mm}
  (i) $\theta$ is {a random variable} with no distributional information; or (ii) $\theta$ has {an unknown} nominal value.
  \vspace*{1mm}

  In both instances, $\theta$ is assumed to lie in $\mathcal{T}$, a {user-defined} uncertainty set.
 \vspace*{0.25mm}


  &
  \vspace*{0.25mm}
  For a general convex uncertainty set $\mathcal{T}$, robust optimization is \emph{intractable}. For some {special structured sets} $\cT$, {this problem} is equivalent to a finite-dimensional convex program.
  &
  \vspace*{0.25mm}
  (i) Conservative solutions; (ii) Needs user-specified uncertainty set $\mathcal{T}$; (iii) Tractability relies on the problem structure -- 
  \sa{for general closed convex cone $\mathcal{K}$, 
  imposing $h(x;\theta)\preceq_{\mathcal{K}}\mathbf{0}$ for all
$\theta\in\mathcal{T}$ is \emph{not tractable}~\cite{Bertsimas2006}}.
  \\ \hline
  \vspace*{0.25mm}
 {\bf Stochastic Optimization:}
 \begin{align*}
    \min_{x\in X}\{\mathbb{E}_{\theta}[f(x;\theta)]: \mathbb{E}_{\theta}[h(x;\theta)]\preceq_{\mathcal{K}}\mathbf{0}\}\\
\mbox{{or}} \hspace{1in} \\
    \min_{x\in X}\{\mathbb{E}_{\theta}[f(x;\theta)]: \mathbb{P}_{\theta}\big(\cS(x)\big)\geq 1-\tau\}\\
 S(x)   \triangleq\{\theta:\ h(x;\theta)\preceq_{\mathcal{K}}\mathbf{0}\}.
 \end{align*}
 &
 \vspace*{0.25mm}
 $\theta$ is a random variable: (i) either the distribution of $\theta$ is available or (ii) an oracle exists for generating gradients $\nabla_x f(x;\theta)$ and $\nabla_x h(x;\theta)$.


 &
 \vspace*{0.25mm}
 Suppose an unbiased \emph{oracle} is given that produces $\nabla_x f(x;\theta)$ and $\nabla_x h(x;\theta)$ when $X$ is convex, $f(.;\theta)$ and $h(.;\theta)$ are convex differentiable \sa{for any $\theta$}. Then an $\epsilon$-solution can be computed in {$\mathcal{O}(1/\epsilon^2)$} projected gradient type steps {
  (under some conditions)~\cite{jiang16variational}. The chance constrained problem is, in general, intractable.}
 &
 \vspace*{0.25mm}
 {Solving this problem} necessitates {either (i) the availability of tractable expectations (integrals) (and their derivatives) defined over a distribution $\mathcal{D}$}; or (ii) an oracle that produces $\nabla_x f(x;\theta)$ and $\nabla_x h(x;\theta)$ with suitable bias and moment assumptions ({facilitating the use of stochastic approximation}. \\
\hline
\vspace*{0.25mm}
{\bf Learning-facilitated Opt.:}
 \begin{align*}
 \cC(\theta^*):\quad \min_{x \in X}\{ f(x;\theta^*):\ x\in\cH(\theta^*)\}\\
 \theta^* \in \argmin_{\theta \in \Theta} \ell(\theta).
\end{align*}
\vspace*{-2mm}
&
\vspace*{0.25mm}
$\theta$ has a true nominal value $\theta^*$ that corresponds to a solution of a learning problem (such as a regression problem). This requires an apriori collection of data.\vspace*{-2mm}

&
\vspace*{0.25mm}
Under differentiability and convexity assumptions, \sa{an $\epsilon$-solution to $\mathcal{C}(\theta^*)$ can be computed via first-order schemes in $\mathcal{O}(1/\epsilon)$ projected gradient type steps~\cite{nesterov2013introductory}}.\vspace*{-2mm}
&
\vspace*{0.25mm}
Need the availability of data to construct a suitable learning problem $\ell(\theta)$. \\ \hline
\end{tabular}
\vspace*{0.5mm}
\caption{\sa{Three related approaches}}
\label{tab:difference}
\end{center}
}
\vspace*{-7mm}
\end{table*}
\section{Preliminaries}\label{sec:prelim}
{Given $\theta\in\Theta$,} the problem ${\cal C}(\theta)$ is equivalent to the following reformulation: 
\vspace*{-1mm}
\begin{equation}
\min_{x,z} \left\{f(x;\theta):
h(x;\theta)+z={\mathbf{0}}, \quad x\in X,\quad z\in \Kscr\right\}.\label{formul:1}
\vspace*{-1mm}
\end{equation}
Let $\lambda\in \R^m$ denote a dual variable corresponding to the
	equality constraints in \eqref{formul:1}. For any given $\rho>0$,
			 {we define} the augmented Lagrangian function for
				 \eqref{formul:1} as
{\small
\begin{align*}
{\cal L}_{\rho}(x,\lambda;\theta)\triangleq \min_{z \in \Kscr}
\left[f(x;\theta)+\lambda^\top (h(x;\theta)+z)+\tfrac{\rho}{2}\|h(x;\theta)+z\|^2\right],
\end{align*}}%
where $\dom\,{\cal L}_\rho=X\times\R^m\times\Theta$. 
From the definition of $d_{\cK}(\cdot)$, 
\vspace*{-2mm} 
\begin{align*}
{\cal L}_\rho(x,\lambda;\theta)
=f(x;\theta)+\tfrac{\rho}{2}
d^2_{\Kscr}\big(-\big(h(x;\theta)+\tfrac{\lambda}{\rho}\big)\big) -\tfrac{\|\lambda\|^2}{2\rho}.
\vspace*{-2mm}
\end{align*}
{Note that {for any $\bar{y}\in\reals^m$}, we have
\begin{align}\label{proj-prop} \Pi_{\Kscr}(-\bar y) = 
-\argmin_{\hat y \in -\Kscr} \|\hat y - \bar y\| = -\Pi_{-\Kscr}(\bar y);
\end{align}}%
and consequently, by invoking \eqref{proj-prop}, we also have that
{\small
\begin{align}\label{proj-prop2}
{d_{\Kscr}(-\bar{y})
= \|-\Pi_{\Kscr}(-\bar{y})-\bar{y}\|=\|\Pi_{-\Kscr}(\bar{y})-\bar{y}\|=d_{-\Kscr}(\bar{y}).}
\end{align}}%
{It follows from \eqref{proj-prop2} that  ${\cal L}_\rho(x,\lambda;\theta)$} 
can be rewritten as
\begin{align}
{\cal L}_\rho(x,\lambda;\theta) =f(x;\theta)+\tfrac{\rho}{2} d^2_{-\Kscr}\big(h(x;\theta)+\tfrac{\lambda}{\rho}\big) -{\|\lambda\|^2\over 2\rho}.\label{Aug_L_forrho}	
\end{align}
Next, we derive the gradient $\nabla_{\lambda} {\cal L}_{\rho} (x,\lambda;\theta)$. 
{Since $\grad d^2_{\mathcal{K}}(x)=2(x-\Pi_{\mathcal{K}}(x))$ for any $x\in\reals^m$}, $\nabla_{\lambda} {\cal L}_{\rho}$ can be computed as 
{\small
\begin{align}
\nabla_{\lambda} {\cal L}_{\rho}(x,\lambda;\theta)
= h(x;\theta)-\Pi_{-\Kscr}\Big({\lambda \over \rho}
		+h(x;\theta)\Big)\notag \\
=\Pi_{\Kscr^*}\Big({\lambda \over \rho} +h(x;\theta)\Big) -{\lambda\over \rho},\label{Grad_l}
\end{align}}%
where in the last  equality, we use the property that $\bar{x}=\Pi_{-\Kscr} (\bar{x}) + \Pi_{\Kscr^*}(\bar{x})$ {for all $\bar{x}\in\reals^m$}.  {Clearly, ${\cal L}_0(x,\lambda;\theta)$, i.e., ${\cal L}_\rho(x,\lambda;\theta)|_{\rho=0}$ is
the Lagrangian function:} 
{\small
\begin{align}
\label{eq:lagrangian}
 {\cal L}_0(x,\lambda;\theta)\triangleq \begin{cases}
 f(x;\theta)+\lambda^\top h(x;\theta), & \mbox{ if } \lambda\in \Kscr^* \\
		  							  -\infty, & \mbox{otherwise.}
						\end{cases} 	
\end{align}}%
Moreover, let
\begin{align}
\label{lag-prob}
X^*(\lambda;\theta)\triangleq\argmin_{x \in X} {\cal L}_0(x,\lambda;\theta).
\end{align}
For $\rho\geq 0$, the \emph{augmented} dual problem 
of ${\cal C}(\theta)$
is defined as 
\begin{align*}
\left(D_{\rho}\right):\qquad
\max_{\lambda \in \Kscr^*}\quad \Big\{g_\rho(\lambda;\theta)\triangleq\inf_{x\in X} {\cal L}_\rho(x,\lambda;\theta)\Big\}.
\end{align*}
Clearly, $\dom g_\rho \subseteq \Kscr^*$ for $\rho\geq 0$. Let $\Lambda^*\triangleq\argmax_{\lambda\in\cK^*} g_0(\lambda;\theta^*)$.
\begin{assumption}\label{Assump: Aug_1}
Consider problems $\cE$ in~\eqref{eq:learning-problem} and $\cC(\theta)$ in \eqref{eq:parametric-opt-problem}. Suppose $X \subseteq\Real^n$ and $\Theta$ are convex compact sets.
\begin{enumerate}
\item[i.] Let {$h(\cdot;\theta)$ be an affine map
for every $\theta \in \Theta$, i.e.,
	$h(x;\theta)=A(\theta)x+b(\theta)$ for some $A(\theta)\in\R^{m\times
		n}$ and $b(\theta)\in\R^m$. Suppose $A(\cdot)$ and $b(\cdot)$
		are {globally Lipschitz continuous at $\theta^*$}, i.e., there exist
			constants $L_A, L_b$ such that for all $\theta\in \Theta$,
{{$\|A(\theta)-A(\theta^*)\|  \leq L_A \|\theta-\theta^*\|$ and $\|b(\theta)-b(\theta^*)\| \leq L_b \|\theta-\theta^*\|.$
}}}%
\item[ii.] {Let 
$f(\cdot;\theta)$ be convex on $X$ {for all $\theta\in\Theta$} and $f(x;\cdot)$ be globally Lipschitz continuous {at $\theta^*$} uniformly for all $x\in X$ with constant {$L_{f,\theta}$}; i.e., ${|f(x;\theta)-f(x;\theta^*)|}\leq {L_{f,\theta}}\|\theta-\theta^*\|$ for all $\theta\in \Theta$ and $x \in X$.}
\item[iii.] For each $\lambda\in\cK^*$, {$X^*(\lambda;\cdot)$} is {globally upper-Lipschitz
at $\theta^*$} with some constant {$\kappa(\lambda)>0$}, i.e.,
for all $\theta\in\Theta$,
{\small
\begin{align}
\label{eq:main-assumption}
X^*(\lambda;\theta)\subseteq X^*(\lambda;\theta^*)+{\kappa(\lambda)} \|\theta-\theta^*\|~{\cB(\mathbf{0},1)}.
\end{align}}%
{Moreover, $\kappa:\cK^*\rightarrow\reals$ is continuous.}\qed
\end{enumerate}
\end{assumption}
\begin{remark}
\label{rem:Lipschitz-h}
{Given $x\in X$, by Assumption~\ref{Assump: Aug_1}.i,
\begin{align*}
\|h(x;\theta)-h(x;\theta^*)\|&\leq
	\|A(\theta)-A(\theta^*)\|\|x\|+\|b(\theta)-b(\theta^*)\|\\
	& \leq (L_A\|x\|+L_b)\|\theta-\theta^*\|,\quad \forall~\theta\in\Theta.
\end{align*}
Since $X$ is assumed to be compact, there exists a finite
	constant $D_x$ such that $D_x \triangleq\max_{x\in X} \|x\|$.
Hence, $h(x;\cdot)$ is {globally Lipschitz continuous at $\theta^*$} with uniform constant
	$L_{h,\theta}$ for all $x\in X$, where $L_{h,\theta}\triangleq (L_A D_x+L_b)$.}
{Moreover, given $\theta\in\Theta$,
	$$\norm{h(x;\theta)-h(x';\theta)}\leq\norm{A(\theta)}\norm{x-x'},\quad \forall~x,x'\in X.$$ 
Since
		$\norm{A(\theta)}=\sigma_{\max}(A(\theta))$ is continuous in
		$\theta$ and $\Theta$ is compact,
	$L_{h,x}\triangleq\max_{\theta\in\Theta}\norm{A(\theta)}$ exists;
	therefore, $h(\cdot;\theta)$ is Lipschitz continuous with a uniform constant $L_{h,x}$ for all $\theta \in \Theta$.}
{Clearly, for all $\theta \in \Theta$, $h(\cB(\mathbf{0},1);\theta) \subseteq \cB\big(b(\theta), \norm{A(\theta)}\big)$; hence, $h(\cB(\mathbf{0},1);\theta) \subseteq b(\theta)+L_{h,x} \cB(\mathbf{0}, 1)$.}\qed

{The non-Lipschitzian regime is significantly more challenging. {When gradient maps are either not Lipschitz
continuous or such a constant is not immediately
available, one approach has been to utilize either convolution-based smoothing techniques~\cite{yousefian12stochastic,yousefian16selftuned} or deterministic techniques such as Moreau smoothing~\cite{beck2017first} in which a diminishing smoothing parameter (decaying to zero at a suitable rate) is employed~\cite{yousefian17smoothing,jalilzadeh2018vssa}. Using these techniques, one may be able to recover rate statements for misspecified problems, a focus of future work.}}
\end{remark}
Rather than focusing on the nature of the algorithm employed for resolving the
learning problem, {we assume} that the adopted scheme
produces a sequence $\{\theta_k\}$ that converges to the optimal solution {$\theta^*$}
at a non-asymptotic {\it linear} rate.
\begin{assumption}
\label{Assump: learn_lin_rate}
There exists a learning scheme that generates 
$\{\theta_k\}$
such that $\theta_k \to \theta^*$ at a linear rate as $k\to \infty$, i.e., there
exists 
$\tau\in(0,1)$ such that $\|\theta_k-\theta^*\|\leq \tau^k\|\theta_0-\theta^*\|$ for $k \geq 0$ and $\theta_0
\in \Theta$. In addition, at iteration $k$ of the optimization {method for ${\cal C}$}, only $\theta_1,\dots,\theta_k$ are revealed.\qed
\end{assumption}

\sa{There are a host of schemes that produce sequences that converge to a solution of a strongly convex (possibly linearly constrained) learning problem  at a linear rate, including projected gradient schemes, accelerated gradient methods, etc.~\cite{beck2017first}. 
In case the learning problem is merely convex (not strictly convex), if it satisfies certain growth conditions around the set of optimal solutions, then there are efficient methods that can also compute an optimal solution with linear convergence rate~\cite{drusvyatskiy2018error}. {In fact, linear rates can also be proven in nonconvex regimes under the Polyak-Lojasiewicz (PL) inequality~\cite{karimi17linear}.}
In some other cases, $\theta^*$ can be computed by solving a large-scale linear system, and Kaczmarz method can produce a linearly convergent sequence as well~\cite{aybat17_globalsip}.}%
\begin{remark}
\sa{For $\theta\in\Theta$, let $X^*(\theta)\triangleq\argmin_{x\in X}\{f(x;\theta):\ A(\theta)x+b(\theta)\in -\cK\}$ and $f^*(\theta)\triangleq f(x^*(\theta);\theta)$ for $x^*(\theta)\in X^*(\theta)$. Assuming $X$ and $\Theta$ {are} compact convex sets, $\cK$ {is} closed, and $f(\cdot;\cdot)$ is continuous on $X\times \Theta$, one can show that $f^*(\cdot)$ is continuous and $X^*(\cdot)$ is upper-hemicontinuous at $\theta^*$ under weak assumptions on the feasible parameter space $\Theta$ -- see Appendix~\ref{sec:app-set_continuity}. Therefore, if the learning scheme produces a parameter estimate sequence converging to $\tilde{\theta}$ close to $\theta^*$, then {the set} $X^*(\tilde{\theta})$ {obtained} using our method will be ``close" to the true solution set $X^*(\theta^*)$.}
\end{remark}
Lemma~\ref{Lemma: Aug_Rock_Lemmas} {provides} various properties of the
gradient of the dual function $\nabla_{\lambda}g_\rho$ {and} will be used in
our analysis. Its proof {may} be found in~\cite{rockafellar1973dual} and is
omitted.
\begin{lemma}\label{Lemma: Aug_Rock_Lemmas}
Suppose {Assumptions}~\ref{Assump: Aug_1}.i and \ref{Assump: Aug_1}.ii hold. 
\begin{enumerate}
\item[i.] For any $\rho>0$ and $\theta \in \Theta$, the dual function $g_\rho (\lambda;\theta)$ is {\it everywhere} finite, continuously differentiable concave function over $\R^m$; more precisely, $g_\rho(\lambda;\theta)=\max_{w\in\R^m}\{g_0(w;\theta)-\tfrac{1}{2\rho}\|w-\lambda\|^2\}$, i.e., $g_\rho(\cdot,\theta)$ is the Moreau envelope of $g_0(\cdot,\theta)$ for all $\theta\in\Theta$. Therefore,
	$\nabla_\lambda g_\rho(\lambda;\theta)$ is Lipschitz continuous in
	$\lambda$ with constant $\tfrac{1}{\rho}$ for all $\theta$ in $\Theta$.
\item[ii.] For any given $\lambda\in \Kscr^*$ and $\theta\in \Theta$, $\nabla_\lambda g_\rho$ can be computed as $\nabla_\lambda g_\rho (\lambda;\theta)=\nabla_{\lambda} {\cal L}_\rho (x^*(\lambda;\theta),\lambda;\theta)$, where $x^*(\lambda;\theta)\in \argmin_{x\in X} {\cal L}_\rho
(x,\lambda;\theta)$.
\item[iii.] Given $\lambda\in \Kscr^*$, $\theta\in \Theta$,
	let $\tilde{x}(\lambda;\theta)$ be an {\it inexact} solution to $\min_{x\in X}{\cal L}_\rho(x,\lambda;\theta)$
	with accuracy $\alpha$, i.e.,  $\tilde{x}(\lambda;\theta)\in X$ satisfies
${\cal L}_\rho(\tilde{x}(\lambda;\theta),\lambda;\theta)\leq g_\rho(\lambda;\theta)+\alpha,$
then \vspace*{-2mm}
\begin{equation*}
\|\nabla_{\lambda} {\cal L}_\rho(\tilde{x}(\lambda;\theta),\lambda;\theta)-\nabla_{\lambda} g_\rho(\lambda;\theta)\|^2\leq {2\alpha/ \rho}.
\vspace*{-2mm}
\end{equation*}
\end{enumerate}
\end{lemma}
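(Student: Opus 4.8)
The plan is to prove the three parts in order, deriving (ii) and (iii) from the single structural fact, established in~(i), that $g_\rho(\cdot;\theta)$ is the Moreau envelope of the ordinary dual function $g_0(\cdot;\theta)$. As a preliminary I would record that, since $X$ is compact, $f(\cdot;\theta)$ is finite and lower semicontinuous on $X$, and $h(\cdot;\theta)$ is affine, the function $g_0(w;\theta)=\inf_{x\in X}[f(x;\theta)+w^\top h(x;\theta)]$ for $w\in\Kscr^*$ (and $g_0(w;\theta)=-\infty$ otherwise), as in~\eqref{eq:lagrangian}, is proper, upper semicontinuous, and concave on $\R^m$ for every $\theta\in\Theta$.

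For part~(i), I would first pass to the primal value (perturbation) function $p(u;\theta)\triangleq\inf\{f(x;\theta):\ x\in X,\ h(x;\theta)\preceq_{\Kscr}u\}$, which is proper and convex in $u$ and satisfies $g_0(w;\theta)=\inf_u[p(u;\theta)+w^\top u]=-p^*(-w;\theta)$. By the definition of ${\cal L}_\rho$, the quantity $g_\rho(\lambda;\theta)=\inf_{x\in X}{\cal L}_\rho(x,\lambda;\theta)$ is a joint infimum over $x\in X$ and the slack $z\in\Kscr$; grouping the terms that depend only on $u\triangleq h(x;\theta)+z$ gives
\[
g_\rho(\lambda;\theta)=\inf_{u}\big[p(u;\theta)+\lambda^\top u+\tfrac{\rho}{2}\|u\|^2\big]=\inf_{u}\big[p(u;\theta)+\tfrac{\rho}{2}\|u+\tfrac{\lambda}{\rho}\|^2\big]-\tfrac{\|\lambda\|^2}{2\rho},
\]
which, up to the additive constant, is the Moreau--Yosida regularization of $p(\cdot;\theta)$ at $-\lambda/\rho$. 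I would then invoke the conjugate identity $\inf_u[p(u;\theta)+\tfrac{\rho}{2}\|u-v\|^2]=\sup_w[v^\top w-p^*(w;\theta)-\tfrac{1}{2\rho}\|w\|^2]$ with $v=-\lambda/\rho$, substitute $-p^*(w;\theta)=g_0(-w;\theta)$, and complete the square (with the change of variable $w\mapsto-w$) to arrive at
\[
g_\rho(\lambda;\theta)=\max_{w\in\R^m}\{g_0(w;\theta)-\tfrac{1}{2\rho}\|w-\lambda\|^2\}.
\]
Finiteness on all of $\R^m$, attainment of the maximum, concavity, differentiability, and $\tfrac{1}{\rho}$-Lipschitz continuity of $\nabla_\lambda g_\rho(\cdot;\theta)$ then follow from the standard properties of the Moreau envelope of a proper closed concave function. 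Alternatively, one may reach the second display directly by inserting $\lambda^\top u+\tfrac{\rho}{2}\|u\|^2=\sup_w[w^\top u-\tfrac{1}{2\rho}\|w-\lambda\|^2]$ into the definition of $g_\rho$ and exchanging the infimum with the supremum.

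Parts~(ii) and~(iii) are then short. For~(ii), fix a minimizer $x^*=x^*(\lambda;\theta)\in\argmin_{x\in X}{\cal L}_\rho(x,\lambda;\theta)$; by~\eqref{Aug_L_forrho} and the differentiability of $d^2_{-\Kscr}(\cdot)$, the map $\mu\mapsto{\cal L}_\rho(x^*,\mu;\theta)$ is differentiable with gradient given by~\eqref{Grad_l}, it majorizes $g_\rho(\cdot;\theta)=\inf_{x\in X}{\cal L}_\rho(x,\cdot;\theta)$ everywhere, and the two agree at $\mu=\lambda$; hence the difference ${\cal L}_\rho(x^*,\cdot;\theta)-g_\rho(\cdot;\theta)$ is nonnegative, differentiable at $\lambda$ (using part~(i)), and minimized there, so its gradient vanishes at $\lambda$, which is the claimed identity. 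For~(iii), write $\tilde x\triangleq\tilde x(\lambda;\theta)$, $v\triangleq\nabla_\lambda{\cal L}_\rho(\tilde x,\lambda;\theta)$, and $w\triangleq\nabla_\lambda g_\rho(\lambda;\theta)$; completing the square in~\eqref{Aug_L_forrho} exhibits ${\cal L}_\rho(\tilde x,\cdot;\theta)$ as a pointwise minimum over $z\in-\Kscr$ of functions affine in $\lambda$, hence concave, so ${\cal L}_\rho(\tilde x,\mu;\theta)\le{\cal L}_\rho(\tilde x,\lambda;\theta)+v^\top(\mu-\lambda)$, while part~(i) yields $g_\rho(\mu;\theta)\ge g_\rho(\lambda;\theta)+w^\top(\mu-\lambda)-\tfrac{1}{2\rho}\|\mu-\lambda\|^2$. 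Combining these with $g_\rho(\mu;\theta)\le{\cal L}_\rho(\tilde x,\mu;\theta)$ and the $\alpha$-suboptimality ${\cal L}_\rho(\tilde x,\lambda;\theta)\le g_\rho(\lambda;\theta)+\alpha$ gives, for every $\mu$,
\[
-\alpha\ \le\ g_\rho(\lambda;\theta)-{\cal L}_\rho(\tilde x,\lambda;\theta)\ \le\ (v-w)^\top(\mu-\lambda)+\tfrac{1}{2\rho}\|\mu-\lambda\|^2,
\]
and choosing $\mu-\lambda=-\rho(v-w)$ collapses the right-hand side to $-\tfrac{\rho}{2}\|v-w\|^2$, so $\|v-w\|^2\le 2\alpha/\rho$.

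I expect the only genuine obstacle to lie in part~(i): turning the formal conjugate manipulations into a rigorous argument, which amounts to checking that $g_0(\cdot;\theta)$ is proper and closed for every $\theta\in\Theta$ and to justifying the exchange of the infimum over the unbounded slack $z\in\Kscr$ (equivalently, over the unbounded effective domain of $p(\cdot;\theta)$) with the conjugate supremum --- i.e., establishing the Moreau-envelope identity itself. The remaining ingredients --- differentiability of ${\cal L}_\rho(x^*,\cdot;\theta)$ and concavity of ${\cal L}_\rho(\tilde x,\cdot;\theta)$ in $\lambda$ needed in~(ii) and~(iii) --- are immediate from~\eqref{Aug_L_forrho}, and the one-variable minimization over $\mu$ in~(iii) is elementary. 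Since a complete proof is available in~\cite{rockafellar1973dual}, I would cite it rather than reproduce these details.
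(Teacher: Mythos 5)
Your proposal is correct. The paper itself does not prove this lemma --- it states that the proof ``may be found in~\cite{rockafellar1973dual} and is omitted'' --- and your reconstruction follows exactly the classical route underlying that reference: the perturbation-function/conjugate-duality identification of $g_\rho(\cdot;\theta)$ as the Moreau envelope of $g_0(\cdot;\theta)$ for part~(i), the touching-from-above argument for part~(ii), and the standard inexact-gradient bound (concavity of ${\cal L}_\rho(\tilde x,\cdot;\theta)$ in $\lambda$ plus the descent inequality for the $\tfrac{1}{\rho}$-smooth concave envelope, optimized over $\mu$) for part~(iii). All three steps check out, and deferring the remaining technical justification of the inf--sup exchange to the cited source is consistent with what the paper does.
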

Next, we examine 
the Lipschitz continuity of $\nabla_{\lambda}
g_{\rho}(\lambda;\theta)$ in $\theta\in\Theta$. 
{By properties of the Moreau envelope~\cite{hiriart2001convex}, in particular from Lemma~\ref{Lemma: Aug_Rock_Lemmas}.i, it follows that}
\begin{align}
\label{def-nabla-g} \nabla_{\lambda} g_{\rho}(\lambda;\theta) =  \tfrac{1}{\rho}
({\pi_{\rho}(\lambda;\theta) -\lambda}), \end{align}
where {${\pi_{\rho}}(.;\theta)$, the Moreau map of $g_\rho(.;\theta)$},
	  is defined as
\begin{align}\label{def-pi}
 \pi_{\rho}(\lambda;\theta) \ \triangleq \ \argmax_w\ g_0(w;\theta) - \tfrac{1}{2\rho} \|w-\lambda\|^2.\end{align}
Therefore, it suffices to show the Lipschitz continuity of
${\pi_{\rho}}(\lambda;\cdot)$. 
We begin with an
intermediate lemma that proves the Lipschitz continuity 
when the \emph{subdifferential} of $-g_0(\cdot;\theta)$ is {upper-Lipschitz} at $\theta^*$. Let
$\partial_w g_0(w;\theta)\triangleq\{s\in\reals^m:\
	g_0(w;\theta)+s^\top(w'-w)\geq g_0(w;\theta),\ \forall w'\in\cK^*\}$. %
\begin{lemma}\label{lip-pi}
Suppose {for $\lambda\in\cK^*$, there exists $\kappa_d(\lambda)>0$ such that $g_0(\lambda;\cdot)$ satisfies the
following for all $\theta \in \Theta$: 
\begin{align}\label{lip-sub}
\partial_\lambda {g_0}(\lambda;\theta) \subseteq
\partial_\lambda {g_0}(\lambda;\theta^*) + \kappa_d(\lambda) \|\theta-\theta^*\|~\cB(\mathbf{0},1),
\end{align}}%
\sa{Then for any 
$\theta\in\Theta$,}
\begin{equation}
\label{eq:lipschitz-MoreauMap}
{\|{\pi_{\rho}}(\lambda;{\theta})-{\pi_{\rho}}(\lambda;{\theta^*})\|\leq
{\rho} \kappa_d(\lambda) \|{\theta} -\theta^*\|.}
\end{equation}
\end{lemma}

\sa{The condition 
 given in \eqref{lip-sub}} is not immediately useful in practice as 
it is assumed on $g(\lambda;\theta)$. However, the next result
shows that 
{\eqref{lip-sub} \sa{indeed} holds when $X^*(\lambda;\theta)$ is upper-Lipschitz at $\theta^*$ -- see 
Assumption~\ref{Assump: Aug_1}.iii}.
{
\begin{lemma}\label{lip-g0}
{For each $\lambda\in\cK^*$, under Assumption~\ref{Assump: Aug_1}, {$g_0$ satisfies \eqref{lip-sub}} for all $\theta\in\Theta$ with 
	$\kappa_d(\lambda) \triangleq L_{h,\theta} + \kappa(\lambda) L_{h,x}$.}
\end{lemma}}
\sa{The proofs of Lemmas~\ref{lip-pi} and~\ref{lip-g0} are given in the appendices~\ref{sec:app-LipschitzI} and~\ref{sec:app-LipschitzII} of the online supplement, respectively.} 

{The Lipschitzian 
{properties of}
	$\pi(\lambda;\cdot)$ and $\nabla_{\lambda} g_{\rho}
	(\lambda;\cdot)$ stated below in Theorem~\ref{Lips_contin_grad_g} follow} from Lemmas~\ref{lip-pi} and \ref{lip-g0}.
\begin{theorem}
\label{Lips_contin_grad_g}
Suppose Assumption~\ref{Assump: Aug_1} holds and let {$\kappa_d(\lambda)
	\triangleq L_{h,\theta} +\kappa(\lambda) L_{h,x}.$} Then, we have
	the following:
\begin{enumerate}
\item[(i)]
$\|{\pi_{\rho}}(\lambda;{\theta})-{\pi_{\rho}}(\lambda;{\theta^*})\|\leq
\kappa_d(\lambda) {\rho} \|{\theta} -\theta^*\|,\ \forall~\theta\in \Theta$.
\item[(ii)]
 $\nabla_{\lambda}
g_\rho(\lambda;\cdot)$ is Lipschitz continuous at $\theta^*$ 
with {constant $\kappa_d(\lambda)$} for all $\lambda\in \Kscr^*$.
\end{enumerate}
\end{theorem}
We provide a 
sufficient condition for Assumption~\ref{Assump: Aug_1}.iii to hold under 
convexity and differentiability assumptions on $f$.
\begin{theorem}\label{thm:pseudoLipschitz}
\sa{Under Assumption~\ref{Assump: Aug_1}.i}, consider $\cC(\theta)$ in~\eqref{eq:parametric-opt-problem} such that $f(\cdot;\theta)$ is {continuously} differentiable on an open set $\tilde{X}\supseteq X$ for all $\theta\in\Theta$; $\grad_x f(x;\cdot)$ is Lipschitz on $\Theta$ with constant $L_{F,\theta}$ for all $x\in X$; $f(\cdot;\theta^*)$ is twice continuously differentiable on $\tilde{X}$; $f(\cdot;\theta^*)$ is strictly convex on $X$. \sa{Then for any compact $\Lambda\subset\cK^*$, there exist $c>0$ such that for any $\lambda\in\Lambda$,
\eqref{eq:main-assumption} holds for all $\theta\in\Theta$ with 
$\kappa(\lambda)\triangleq c~(L_{F,\theta}+L_A\norm{\lambda})$.}
\end{theorem}
\begin{IEEEproof}
\sa{The proof is provided in the Appendix~\ref{sec:app-LipschitzThm} of the online supplement.}
\end{IEEEproof}
{For the consensus optimization application (discussed in introduction), if $\sum_{i\in\cN}f_i(x)$ is strongly convex and twice continuously differentiable, then Assumption~1.iii holds;
for this scenario, \eqref{eq:consensus_formulation} can be regularized to be strongly convex without altering the optimal solution set -- see~\cite{aybat2017Allerton} for details.}
\section{
Inexact Parametric Aug. Lagrangian Method}
\label{sec:rate}
\begin{algorithm}
{\
\caption{{:\ $\mathrm{IPALM}$} -- {inexact parametric augmented Lagrangian method}}
\hspace*{-2mm}\textbf{Input:} $\lambda_0\in\cK^*$, $x_0\in X$, $\{\rho_k\}$ and $\{\alpha_k\}$\\%
For all $k\geq 0$, update:
\label{Alg: ALM}
\begin{enumerate}
\item[1.]  {Given $\theta_k$,} compute {$x_{k+1}\in X$} such that\\
\hspace*{1cm}${\cal L}_{\rho_k}(x_{k+1},\lambda_k;\theta_{k})\leq g_{\rho_k}(\lambda_k;\theta_{k})+\alpha_k$;
\item [2.] $\lambda_{k+1}\gets\lambda_k+\rho_k\nabla_{\lambda} {\cal L}_{\rho_k}(x_{k+1},\lambda_k;\theta_{k})$;
\item [3.] $k\gets k+1$;
\end{enumerate}}
\end{algorithm}
\vspace*{-2mm}
{
Next, we introduce the misspecified inexact
	augmented Lagrangian scheme, displayed in Algorithm~\ref{Alg: ALM}, {i.e., IPALM.}
From Step~2 of 
{IPALM}, and \eqref{Grad_l}, it follows that \vspace*{-1mm}
{
\begin{align}
\lambda_{k+1}
&=\lambda_k+\rho_k\nabla_{\lambda} {\cal
	L}_{\rho_k}(x_{k+1},\lambda_k;\theta_{k})\nonumber \\
& = \lambda_k +\rho_k\Big( \Pi_{\Kscr^*}\Big({\lambda_k\over\rho_k}+h(x_{k+1};\theta_k)\Big)-{\lambda_k\over \rho_k}\Big) \notag\\
&= \Pi_{\Kscr^*}\left({\lambda_k}+\rho_kh(x_{k+1};\theta_k)\right),\quad \forall~k\geq 0.
\label{eq:lambda-relation}
\end{align}}%
Hence, $\{\lambda_k\}\subseteq \Kscr^*$} for all $k\geq 0$. Notably, if
$\theta_k = \theta^*$ for all $k\geq 0$, this reduces to the traditional
version considered in~\cite{rockafellar1973dual}. {In the remainder of this
	section, 
we analyze the {rate of
		convergence} of 
{IPALM} for a constant penalty parameter in Section~\ref{sec:rate-constant} and proceed to examine the increasing penalty parameter regime in Section~\ref{sec:rate-increasing}.}
\vspace*{-4mm}
\subsection{Convergence analysis for constant penalty 
$\mathbf{\rho_k=\rho>0}$}
\label{sec:rate-constant}
In this section, we study the convergence behavior of 
{IPALM} when $\rho_k=\rho$ for $k\geq 0$.
We make the following assumption on the sequence of inexactness $\{\alpha_k\}$.
\begin{condition}
\label{Assump:Aug_2} 
{\it $\{\alpha_k\}$ sequence satisfies
{$\sum_{k=0}^{\infty} \sqrt{\alpha_k} <\infty$}.}
\end{condition}
Define $\bar{x}_k\in X$ and $\bar{\lambda}_k\in\cK^*$ as \vspace*{-2mm}
{
$$\bar{x}_k\triangleq {1\over k}{\sum_{i=1}^k x_i},\quad \hbox{\normalsize and}\quad \bar{\lambda}_k\triangleq {1\over k}{\sum_{i=1}^k \lambda_i}\quad \hbox{\normalsize for}\quad k\geq 1.\vspace*{-2mm}$$}%
Under {Condition}~\ref{Assump:Aug_2}, we show 
\emph{(i)} $0\leq f^*-g_\rho(\bar{\lambda}_k)\leq\mathcal{O}(1/k)$;
 \emph{(ii)} $d_{-\Kscr}\big(h(\bar{x}_k;\theta^*)\big)\leq
 \mathcal{O}(1/\sqrt{k})$,\quad
\emph{(iii)} $-\mathcal{O}(1/\sqrt{k})\leq f(\bar{x}_k;\theta^*)-f^*\leq
		\mathcal{O}(1/k)$.
These statements are then utilized in deriving the overall
computational complexity in Section~\ref{sec:complexity-constant}.
After independently proving these bounds, we became aware of related recent work~\cite{1506.05320 }, where
{a special case of {IPALM} is studied} with $\alpha_k=\alpha>0$ for all $k\geq 0$ for some fixed $\alpha>0$, and
			under the \emph{perfect information} assumption, i.e., $\theta_k=\theta^*$ for all $k\geq 0$.
In~\cite{1506.05320 }, it is shown that \emph{(i)}
	$f^*-g_\rho(\bar{\lambda}_k)\leq\mathcal{O}(1/k)+\alpha$,
		\emph{(ii)} $d_{-\Kscr}(h(\bar{x}_k;\theta^*))\leq
			\mathcal{O}(1/\sqrt{k})$, and \emph{(iii)}
	$-\mathcal{O}(1/\sqrt{k})\leq f(\bar{x}_k;\theta^*)-f^*\leq
		\mathcal{O}(1/k)+\alpha$. Therefore, according to~\cite{1506.05320 }, $\alpha$ should be fixed as a small
		constant as it appears in \emph{both} primal and dual
		suboptimality bounds. Moreover, since $\alpha$ is fixed
		in~\cite{1506.05320 }, suboptimality of the iterate sequence will
		\emph{stall} after certain number of iterations. In contrast, our
		method may  start with large $\alpha_0$ and gradually
		decrease it, ensuring \emph{both} numerical stability
		and asymptotic convergence to optimality in contrast
		with~\cite{1506.05320 } where the scheme provides approximate
		solutions at best.}

Next, we 
show that dual variables stay bounded by using the following
supporting lemma whose proof follows from
the properties of proximal maps (cf.~\cite{hiriart2001convex}). 
\begin{lemma}
\label{Lemma:Non_exp_pi}
Let $\pi_\rho(\lambda;\theta)$ be the proximal map of $g_0(\cdot;\theta)$ defined in \eqref{def-pi}. Then for all $\lambda_1 ,\lambda_2\in \Kscr^*$ and $\theta \in \Theta$,
$$\|\pi_\rho(\lambda_1;\theta)-\pi_\rho(\lambda_2;\theta)\|^2+ \|\pi^c_\rho(\lambda_1;\theta)-\pi^c_\rho(\lambda_2;\theta)\|^2 \leq \|\lambda_1-\lambda_2\|^2$$
where $\pi^c_\rho(\lambda;\theta)\triangleq \lambda- \pi_\rho(\lambda;\theta)$. Hence, $\pi_\rho$ is {\it nonexpansive} in $\lambda\in \Kscr^*$ for all $\theta\in \Theta$.
\end{lemma}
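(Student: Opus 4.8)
The assertion is precisely the \emph{firm nonexpansiveness} of the proximal map $\pi_\rho(\cdot;\theta)$ of the concave function $g_0(\cdot;\theta)$, so the plan is to deduce it from the first-order optimality conditions characterizing $\pi_\rho(\lambda_1;\theta)$ and $\pi_\rho(\lambda_2;\theta)$ together with monotonicity of $\partial_w g_0(\cdot;\theta)$, mirroring the argument in the proof of Lemma~\ref{lip-pi} but now with $\theta$ held fixed and $\lambda$ varying.

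First I would set $p_i\triangleq\pi_\rho(\lambda_i;\theta)$ for $i=1,2$. By \eqref{def-pi}, $p_i$ is the unique maximizer over $\R^m$ of the strongly concave map $w\mapsto g_0(w;\theta)-\tfrac{1}{2\rho}\|w-\lambda_i\|^2$; since $X$ is compact so that $g_0(\cdot;\theta)$ is finite on $\Kscr^*$ while the quadratic term is everywhere differentiable, the optimality condition reads $0\in\partial_w g_0(p_i;\theta)+\tfrac{1}{\rho}(p_i-\lambda_i)$ and requires no constraint qualification. Equivalently, $s_i\triangleq\tfrac{1}{\rho}(\lambda_i-p_i)\in\partial_w g_0(p_i;\theta)$, so that $\pi^c_\rho(\lambda_i;\theta)=\lambda_i-p_i=\rho\,s_i$ and $\lambda_i=p_i+\rho\,s_i$.

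Subtracting the two identities gives $\lambda_1-\lambda_2=(p_1-p_2)+\rho\,(s_1-s_2)$, and expanding the squared norm yields
\begin{align*}
\|\lambda_1-\lambda_2\|^2 &=\|p_1-p_2\|^2+\rho^2\|s_1-s_2\|^2+2\rho\,(p_1-p_2)^\top(s_1-s_2)\\
&=\|\pi_\rho(\lambda_1;\theta)-\pi_\rho(\lambda_2;\theta)\|^2+\|\pi^c_\rho(\lambda_1;\theta)-\pi^c_\rho(\lambda_2;\theta)\|^2+2\rho\,(p_1-p_2)^\top(s_1-s_2),
\end{align*}
where I used $\rho^2\|s_1-s_2\|^2=\|\rho s_1-\rho s_2\|^2=\|\pi^c_\rho(\lambda_1;\theta)-\pi^c_\rho(\lambda_2;\theta)\|^2$. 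Because $\partial_w g_0(\cdot;\theta)$ is a monotone operator (exactly as invoked in the proof of Lemma~\ref{lip-pi}) and $\rho>0$, the cross term $2\rho\,(p_1-p_2)^\top(s_1-s_2)$ is nonnegative; discarding it gives the stated inequality, and in particular $\|\pi_\rho(\lambda_1;\theta)-\pi_\rho(\lambda_2;\theta)\|\le\|\lambda_1-\lambda_2\|$, i.e.\ the nonexpansiveness of $\pi_\rho(\cdot;\theta)$.

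There is no substantive obstacle here: the only points worth a line of justification are the validity of the additive optimality condition and the nonemptiness of $\partial_w g_0(p_i;\theta)$ at the maximizers — both immediate from finiteness of $g_0(\cdot;\theta)$ on $\Kscr^*$ (a consequence of the compactness of $X$) together with differentiability of the penalty term — and keeping the sign/monotonicity convention for $\partial_w g_0(\cdot;\theta)$ aligned with that used in Lemma~\ref{lip-pi}. Alternatively, one may simply cite the firm nonexpansiveness of the proximal map of any proper lower semicontinuous convex function, applied here to $-g_0(\cdot;\theta)$ (cf.~\cite{hiriart2001convex}).
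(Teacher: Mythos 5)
Your proof is correct. The paper itself does not argue this lemma at all --- it simply cites an external reference for the firm nonexpansiveness of the proximal map --- and what you have written is precisely the standard argument that the citation stands in for: characterize $\pi_\rho(\lambda_i;\theta)$ by the inclusion $\tfrac{1}{\rho}(\lambda_i-p_i)\in\partial_w g_0(p_i;\theta)$, expand $\|\lambda_1-\lambda_2\|^2=\|p_1-p_2\|^2+\|\rho s_1-\rho s_2\|^2+2\rho(p_1-p_2)^\top(s_1-s_2)$, and drop the cross term by monotonicity. Your parenthetical about the sign convention is the right thing to flag: the paper's ``monotonicity of $\partial_w g_0$'' in Lemma~\ref{lip-pi} is really monotonicity of $\partial(-g_0)$ (the supdifferential of a concave function is monotone decreasing), but since the optimality condition and the cross term flip sign together, the inequality survives under either convention, and your use is consistent with the paper's.
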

\begin{IEEEproof}
The proof is given in~\cite{citeulike:9472207}.\
\end{IEEEproof}
To show $\{\lambda_k\}$ stays bounded, {we mainly use the same proof technique
		in~\cite{Rockafellar73_1J,rockafellar1976augmented} together with the Lipschitz continuity result in
Theorem~\ref{Lips_contin_grad_g}.}\begin{theorem}[\textbf{Boundedness of $\{\lambda_k\}$}]
\label{Theorem: Aug_lambda_bnd_cont_rho}
Let {Assumptions~\ref{Assump: Aug_1} and~\ref{Assump: learn_lin_rate}, and Condition~\ref{Assump:Aug_2} hold,} and $\lambda^*$ be an arbitrary solution to the Lagrangian dual of ${\cal C}(\theta^*)$, i.e., $\lambda^*\in\argmax_\lambda g_0(\lambda;\theta^*)$. Then for all $k\geq 1$, $\|\lambda_k-\lambda^*\|\leq C_\lambda$ and {$\bar{\kappa} \triangleq\sup_{k\geq 1}\kappa_d(\lambda_k)< \infty$}, where $C_\lambda$ is defined as follows:
\begin{equation}
C_\lambda \triangleq\sqrt{2\rho}\sum_{i=0}^\infty\sqrt{\alpha_i}+\rho {\kappa_d(\lambda^*)}{\|\theta_0-\theta^*\|\over 1-\tau}+\|\lambda_0-\lambda^*\|.\label{bd-C-lambda}
\end{equation} 
\end{theorem}
\begin{IEEEproof}
We begin by {bounding} 
$\|\lambda_{k+1}-\pi_{\rho}(\lambda_k;\theta_k)\|$. From \eqref{def-nabla-g} and the definition of $\lambda_{k+1}$, given in Step~2 of 
{IPALM}, 
\begin{eqnarray}
\lefteqn{\|\lambda_{k+1}-\pi_\rho(\lambda_k;\theta_{k})\|}\label{proof:Aug_bnd_lambda_1}\\
& & = \rho\|\nabla_{\lambda} {\cal
	L}_\rho(x_{k+1},\lambda_k;\theta_{k})-\nabla_\lambda
	g_{\rho}(\lambda_k;\theta_{k})\| \leq \sqrt{2\rho\alpha_k}, \nonumber
\end{eqnarray}
where the last inequality follows from Lemma~\ref{Lemma: Aug_Rock_Lemmas}.iii. Since $g_\rho(\cdot;\theta^*)$ is the Moreau regularization of $g_0(\cdot;\theta^*)$, we have  that $\lambda^*\in \argmax_{\lambda} g_{\rho}(\lambda;\theta^*)$ for all $\rho>0$; hence, $\nabla_\lambda g_\rho(\lambda^*;\theta^*)=\mathbf{0}$ and $\lambda^*=\pi_{\rho}(\lambda^*;\theta^*)$. Thus,\vspace*{-2mm}
\begin{align}
& \|\pi_\rho(\lambda_k;\theta_{k})-\lambda^*\| \nonumber \\
\leq & {\|\pi_\rho(\lambda_k;\theta_{k})-\pi_\rho(\lambda^*;\theta_k)\| +\|\pi_\rho(\lambda^*;\theta_k)-\pi_\rho(\lambda^*;\theta^*)\|}
\notag\\
\sa{\leq} & {\|\lambda_k-\lambda^*\|+\rho\|\nabla_{\lambda} g_{\rho}(\lambda^*;\theta_{k})-\nabla_{\lambda} g_{\rho}(\lambda^*;\theta^*)\|} \nonumber\\ 
\leq &\|\lambda_k-\lambda^*\|+\rho {\kappa_d(\lambda^*)}\|\theta_{k}-\theta^*\|,\label{proof:Aug_bnd_lambda_2}
\end{align}
{which follows \sa{from \eqref{def-nabla-g}}, the Lipschitz continuity of $\nabla_{\lambda} g_{\rho}(\lambda^*;\cdot)$ 
(Theorem~\ref{Lips_contin_grad_g}.ii), 
and the nonexpansivity of $\pi_\rho$ in
$\lambda$ (Lemma~\ref{Lemma:Non_exp_pi})}. Hence, 
\eqref{proof:Aug_bnd_lambda_1} and~\eqref{proof:Aug_bnd_lambda_2} imply 
\begin{equation}
\label{eq:SM-convergence}
\begin{array}{c}
\|\lambda_{i+1}-\lambda^*\| 
\leq \sqrt{2\rho\alpha_i}+\rho {\kappa_d(\lambda^*)}\|\theta_{i}-\theta^*\|+\|\lambda_i-\lambda^*\|. 
\end{array}
\end{equation}
for all $i\geq 0$. Therefore, for any $k\geq 1$, by summing the above inequality over $i=0,\ldots,k-1$, 
 we get \vspace*{-2mm}
{\small
\begin{align}
\|\lambda_{k}-\lambda^*\|\leq &	\sum_{i=0}^{k-1}\left(\sqrt{2\rho\alpha_i} +\rho {\kappa_d(\lambda^*)}\|\theta_{i}-\theta^*\|\right)+\|\lambda_0-\lambda^*\|, \label{eq:aux-ineq-lambda-bound}
\end{align}}%
and \eqref{bd-C-lambda} immediately follows as $\sum_{i=0}^\infty\norm{\theta_i-\theta^*}=\norm{\theta_0-\theta^*}/(1-\tau)$. {Finally, $\bar{\kappa}<\infty$ follows from continuity of $\kappa_d$ (Assumption~1.iii) and boundedness of $\{\lambda_k\}$.}
\end{IEEEproof}

\begin{remark}
	It is worth emphasizing that the bound $C_{\lambda}$ can be tightened when $\theta^*$ is known. {Indeed, when $\theta_0 = \theta^*$}, the second term disappears.
\end{remark}
\begin{remark}
\sa{Under the premise of Theorem~\ref{thm:pseudoLipschitz}, Assumption~\ref{Assump: learn_lin_rate} and Condition~\ref{Assump:Aug_2} are sufficient to show Theorem~\ref{Theorem: Aug_lambda_bnd_cont_rho}. Indeed, for any $\theta\in\Theta$ and $\bar{x}\in X^*(\lambda^*;\theta)$, the first-order conditions imply
{\small
	\begin{equation*}
	\begin{array}{rl}
	\bar{x}\in\argmin_{x\in X}\{&\L_0(x,\lambda^*;\theta^*)\\
	 &+(\grad_x \cL_0 (\bar{x},\lambda^*,\theta)-\grad_x \cL_0(\bar{x},\lambda^*,\theta^*))^\top x\}.
	\end{array}
	\end{equation*}}%
Lemma~5.1.7 in~\cite{facchinei2007finite} implies that
there exists $c^*>0$ such that
{\small
	\begin{eqnarray}
		\lefteqn{\sup\{\norm{x-x^*(\lambda^*;\theta^*)}:\ x\in X^*(\lambda^*;\theta)\}}\nonumber\\
		&&\leq c^* \sup_{x\in X}\norm{\grad_x f(x;\theta)-\grad_x f(x;\theta^*)+(A(\theta)-A(\theta^*)){^\top}\lambda^*}\nonumber\\
		&&\leq c^*~(L_{F,\theta}+L_A\norm{\lambda^*})\norm{\theta-\theta^*},\quad \forall\theta\in\Theta.\label{eq:assumption1iii_alternative}
\end{eqnarray}}%
Thus, \eqref{eq:main-assumption} is satisfied for
$\lambda=\lambda^*$. Therefore, {by resorting to Assumption~\ref{Assump: Aug_1}.i (and not to Assumption~\ref{Assump: Aug_1}.ii or \ref{Assump: Aug_1}.iii)} and using \eqref{eq:assumption1iii_alternative} one can show $\{\lambda_k\}_{k\in\integers_+}\subset\cK^*$ is a bounded sequence; hence, there exists a compact set $\Lambda\subset\cK^*$ such that $\{\lambda_k\}\subset\Lambda$. Moreover, Theorem~\ref{thm:pseudoLipschitz} implies that for each $\lambda\in\Lambda$, \eqref{eq:main-assumption} holds for all $\theta\in\Theta$ with $\kappa(\lambda)\triangleq c~(L_{F,\theta}+L_A\norm{\lambda})$	for some $c>0$, and clearly $\kappa(\cdot)$ is continuous on $\Lambda$.}
\end{remark}

Next, we prove that the augmented Lagrangian scheme generates 
$\{\lambda_k\}$ such that $\bar{\lambda}_k \to \lambda^*\in\Lambda^*\triangleq \argmax_\lambda g_0(\lambda;\theta^*)$ as $k \to \infty$ by
deriving a rate statement first. 
\begin{theorem}[\textbf{Bound on dual suboptimality}]
\label{Theorem:Aug_bnd_dual_sub}
{Under the premise of 
Theorem~\ref{Theorem: Aug_lambda_bnd_cont_rho}}, let $\{\lambda_k\}_{k\geq 1}$ denote the sequence generated by 
{IPALM}, 
and let $\bar{\lambda}_k\triangleq {1\over k}{\sum_{i=1}^k \lambda_i}$. {Then $\lambda^*\triangleq\lim_{k\rightarrow\infty}\bar{\lambda}_k$ exists; moreover, $\lambda^*\in\argmax_\lambda g_0(\lambda;\theta^*)$ and} \vspace*{-2mm}
\begin{equation}
0\leq f^*-g_\rho(\bar{\lambda}_{k};\theta^*)
\leq{B_g\over k}\label{Bound_dual_cons_rho}
\end{equation}
for all $k\geq 1$, where 
$C_\lambda$ is defined in 
Theorem~\ref{Theorem: Aug_lambda_bnd_cont_rho}, and 
\vspace*{-2mm}
$$B_{g}\triangleq \tfrac{1}{2\rho}\|\lambda_0-\lambda^*\|^2+C_{\lambda}\Big(\sqrt{\tfrac{2}{\rho}}\sum_{k=0}^{\infty} \sqrt{\alpha_k}+\frac{{\bar{\kappa}}\|\theta_0-\theta^*\|}{1-\tau}\Big).$$
\vspace*{-2mm}
\end{theorem}
\begin{IEEEproof}
{Let $\lambda^*\in\Lambda^*$ {denote} an arbitrary dual solution.} From Lemma~\ref{Lemma: Aug_Rock_Lemmas} and by recalling that the duality gap for $\mathcal{C}(\theta^*)$ is {zero}, it follows that $f^*=\max_\lambda g_\rho(\lambda;\theta^*)$ for all $\rho>0$. {By invoking} the Lipschitz continuity of $\nabla_{\lambda} g_\rho(\lambda;\theta^*)$ in $\lambda$ with constant ${1/\rho}$, {the following holds for $i\geq 0$:}
\begin{align}
-g_\rho(\lambda_{i+1};\theta^*)\leq &
-g_\rho(\lambda_i;\theta^*)-\nabla_{\lambda}
g_\rho(\lambda_i;\theta^*)^\top(\lambda_{i+1}-\lambda_i) \nonumber \\
		& +\tfrac{1}{2\rho}\|\lambda_{i+1}-\lambda_i\|^2.\label{proof:ineq_decent}
\end{align}
Under the concavity of $g_\rho(\lambda;\theta^*)$ in $\lambda$, we have that
$-g_\rho(\lambda^*;\theta^*)\geq -g_\rho(\lambda_i;\theta^*)-\nabla_{\lambda} g_\rho(\lambda_i;\theta^*)^\top(\lambda^*-\lambda_i)$.
By combining this inequality and \eqref{proof:ineq_decent}, we get
{\small
\begin{align}
-g_\rho&(\lambda_{i+1};\theta^*) \nonumber \\
\leq &
-g_\rho(\lambda_i;\theta^*)-\nabla_{\lambda}
g_\rho(\lambda_i;\theta^*)^\top(\lambda_{i+1}-\lambda_i)+\tfrac{1}{ 2\rho}\|\lambda_{i+1}-\lambda_i\|^2 \notag\\
\leq &-g_\rho(\lambda^*;\theta^*)-\nabla_{\lambda} g_\rho(\lambda_i;\theta^*)^\top(\lambda_{i+1}-\lambda^*)+\tfrac{1}{2\rho}\|\lambda_{i+1}-\lambda_i\|^2  \notag \\
\notag = &-g_\rho(\lambda^*;\theta^*)-\nabla_{\lambda} {\cal
	L}_\rho(x_{i+1},\lambda_i;\theta_{i})^\top(\lambda_{i+1}-\lambda^*)\\
& +\delta_i^\top(\lambda_{i+1}-\lambda^*)+s_i^\top(\lambda_{i+1}-\lambda^*)+\tfrac{1}{2\rho}\|\lambda_{i+1}-\lambda_i\|^2
	\notag \\
\leq &-g_\rho(\lambda^*;\theta^*)-\tfrac{1}{\rho}
(\lambda_{i+1}-\lambda_{i})^\top(\lambda_{i+1}-\lambda^*)+\tfrac{1}{2\rho}\|\lambda_{i+1}-\lambda_i\|^2\notag \\
& +\|\delta_i\|\|\lambda_{i+1}-\lambda^*\|+\|s_i\|\|\lambda_{i+1}-\lambda^*\|,\label{ineq-bd-lamb}
\end{align}}%
where {$\delta_i\triangleq \nabla_{\lambda} g_\rho(\lambda_i;\theta_{i})-\nabla_{\lambda} g_\rho(\lambda_i;\theta^*)$} and {$s_i\triangleq \nabla_{\lambda} {\cal L}_\rho(x_{i+1},\lambda_i;\theta_{i})-\nabla_{\lambda} g_\rho(\lambda_i;\theta_{i})$}.
Note that
$\|\lambda_{i+1}-\lambda_i\|^2+2(\lambda_{i+1}-\lambda_{i})^\top(\lambda^*-\lambda_{i+1})=\|\lambda_i-\lambda^*\|^2-\|\lambda_{i+1}-\lambda^*\|^2$;
	hence, we can rewrite \eqref{ineq-bd-lamb} as
\begin{align}
-g_\rho(\lambda_{i+1};\theta^*) \leq &-g_\rho(\lambda^*;\theta^*)+\left(\|\delta_i\|+\|s_i\|\right)\|\lambda_{i+1}-\lambda^*\|
\notag \\
		& +\tfrac{1}{2\rho}\left(\|\lambda_i-\lambda^*\|^2-\|\lambda_{i+1}-\lambda^*\|^2\right).\label{bd-lamb-ineq2}
\end{align}
By summing \eqref{bd-lamb-ineq2} over $i=0,\ldots,k-1$, replacing
$g_\rho(\lambda^*;\theta^*)$ by $f^*=\sup_\lambda g_\rho(\lambda;\theta^*)$, 
we obtain \vspace*{-1mm}
{
\begin{eqnarray}
\lefteqn{-\sum_{i=0}^{k-1} (g_\rho(\lambda_{i+1};\theta^*)-
f^*)+\tfrac{1}{2\rho} \|\lambda_{k}-\lambda^*\|^2} \label{proof: bnd_dual_sub_opt} \\
&  \quad \leq \tfrac{1}{2\rho} \|\lambda_0-\lambda^*\|^2 +\sum_{i=0}^{k-1}\left(\|\delta_i\|+\|s_i\|\right)\|\lambda_{i+1}-\lambda^*\|.
\nonumber
\end{eqnarray}}%
Under concavity of $g_\rho(\lambda;\theta^*)$ in $\lambda$, the following holds:
$- \Big(g_\rho(\bar{\lambda}_{k};\theta^*)-
f^*\Big)\leq -{1\over k}\sum_{i=0}^{k-1} \Big(g_\rho(\lambda_{i+1};\theta^*)-
f^*\Big)$.
By dividing both sides of \eqref{proof: bnd_dual_sub_opt} by $k$, dropping the positive term on the left hand side, and using the fact that $\bar{\lambda}_k\in\cK^*$, we get \vspace*{-2mm}
\begin{align} \notag
0 & \leq f^* 
-g_\rho(\bar{\lambda}_{k};\theta^*)\\
&\leq\frac{1}{k}\big(\tfrac{1}{2\rho}\|\lambda^*\|^2+\sum_{i=0}^{k-1}\left(\|\delta_i\|+\|s_i\|\right)\|\lambda_{i+1}-\lambda^*\|\big).\notag \vspace*{-2mm}
\end{align}
Lemma~\ref{Lemma: Aug_Rock_Lemmas}.iii and 
Theorem~\ref{Lips_contin_grad_g} imply that $\|s_i\|\leq\sqrt{
	{2\alpha_i\over \rho}}$ and
$\|\delta_i\|\leq {\bar{\kappa}}\|\theta_{i}-\theta^*\|$, respectively for $i\geq 0$. In addition, from 
Theorem~\ref{Theorem: Aug_lambda_bnd_cont_rho}, we have $\|\lambda_i-\lambda^*
\|\leq C_{\lambda}$ for all $i\geq 1$. Then by the summability of $\sqrt{\alpha_i}$, we have that \vspace*{-2mm}
\begin{eqnarray}
\lefteqn{{\sum_{i=0}^{\infty}
	(\|\delta_i\|+\|s_i\|)\|\lambda_{i+1}-\lambda^*\|}} \notag \\
& & \leq  C_{\lambda}\Big({\bar{\kappa}} \sum_{i=0}^{\infty} \|\theta_{i}-\theta^*\|+\sqrt{\tfrac{2}{\rho}}\sum_{i=0}^\infty\sqrt{\alpha_i}\Big).\label{proof:bnd_subopt_1}
 \vspace*{-2mm}
\end{eqnarray}
Furthermore, substituting $\sum_{i=0}^{\infty} \|\theta_{i}-\theta^*\|= \|\theta_0-\theta^*\|/(1-\tau)$ into \eqref{proof:bnd_subopt_1} gives the desired bound in \eqref{Bound_dual_cons_rho}. 

{Since $\{\bar{\lambda}_k\}$ is a bounded sequence, there exists a convergent subsequence $\cS\subset\integers_+$, i.e., $\lim_{k\in\cS}\bar{\lambda}_k$ exists. Note $g_\rho(\cdot;\theta^*)$ is smooth; hence, taking the limit of \eqref{Bound_dual_cons_rho} along $k\in\cS$, we get $g_\rho(\lim_{k\in\cS}\bar{\lambda}_k;\theta^*)=f^*$. Thus, $\lim_{k\in\cS}\bar{\lambda}_k\in\Lambda^*$. Finally, since \eqref{eq:SM-convergence} is true for any dual optimal point, setting $\lambda^*\triangleq\lim_{k\in\cS}\bar{\lambda}_k$ within \eqref{eq:SM-convergence} implies that $\lambda^*=\lim_{k\rightarrow\infty}\bar{\lambda}_k$ -- for details, see dual convergence proof of Theorem~\ref{Theorem:Aug_increas_rho_linear_rate} provided \sa{in Appendix~\ref{sec:app-dual} of the online supplement.}} 
\end{IEEEproof}
Next, we derive a bound on the primal {\it infeasibility} \sa{for the IPALM iterate sequence.}
{First, we provide two supporting technical lemmas; the first one follows directly
	from Thm.~2.1.5 in~\cite{nesterov2013introductory} {while the proof of the second is elementary and} is omitted.}
\begin{lemma}
\label{Lemma:Aug_grad_bnd}
Let $\phi(\lambda):\R^m\to \R$ be a concave function 
such that $\lambda^*\in\argmax_{\lambda\in\R^m}\phi(\lambda)$ attains the maximum.
If $\nabla \phi$ is Lipschitz continuous with constant $L_{\phi}$, then, 
{\small
$$\|\nabla \phi(\lambda)\|\leq \sqrt{2L_{\phi}\big(\phi(\lambda^*)-\phi(\lambda)\big)},\ \forall \lambda\in \R^m.$$}
\end{lemma}
\begin{lemma}
\label{Lemma:d_triangle}
{Given a closed convex cone $\Kscr$, $d_{\Kscr}\big(y+y'\big)\leq d_{\Kscr}\big(y\big)+\|y'\|$ for all $y,y' {\in \Real^m}$.}
\end{lemma}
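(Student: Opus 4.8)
The plan is to observe that this is nothing more than the triangle inequality evaluated at the projection point, reflecting the well-known fact that the distance function to a nonempty set is $1$-Lipschitz. First I would fix $y,y'\in\Real^m$ and set $s\triangleq\Pi_{\Kscr}(y)$, which is well defined since $\Kscr$ is closed, convex, and nonempty (it contains the origin, being a cone). Since $s\in\Kscr$, the definition of $d_{\Kscr}(\cdot)$ as an infimum over $\Kscr$ immediately gives $d_{\Kscr}(y+y')\leq\|(y+y')-s\|$, because $s$ is one admissible competitor in that infimum.

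Next I would apply the triangle inequality to the right-hand side:
$$\|(y+y')-s\|\leq\|y-s\|+\|y'\|=d_{\Kscr}(y)+\|y'\|,$$
where the last equality uses $d_{\Kscr}(y)=\|y-\Pi_{\Kscr}(y)\|=\|y-s\|$. Chaining the two displays yields $d_{\Kscr}(y+y')\leq d_{\Kscr}(y)+\|y'\|$, which is precisely the claim.

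I do not expect any real obstacle here; the argument is elementary and does not exploit any structural property of $\Kscr$ beyond nonemptiness (closedness and convexity are invoked only so that $\Pi_{\Kscr}(y)$ is a genuine minimizer, but any point of $\Kscr$ would serve as the competitor). The only point requiring a modicum of care is to invoke the infimum definition of $d_{\Kscr}$ correctly so that the first inequality holds for the specific point $s$. One may also note, as a remark, that the inequality is equivalent to the statement that $d_{\Kscr}(\cdot)$ is nonexpansive, applied to the pair $y+y'$ and $y$.
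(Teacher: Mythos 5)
Your proof is correct, but it takes a different (and more elementary) route than the paper's. You bound $d_{\Kscr}(y+y')$ by using the single point $s=\Pi_{\Kscr}(y)\in\Kscr$ as a feasible competitor in the minimization defining the distance, and then apply the ordinary triangle inequality; as you note, this only uses that $\Kscr$ is nonempty (closedness guarantees the competitor is an actual minimizer for $y$, and convexity is not needed at all --- the inequality is really the $1$-Lipschitz property of the distance function to an arbitrary nonempty set). The paper instead writes $d_{\Kscr}(y+y')=\|\Pi^c_{\Kscr}(y+y')\|$ with $\Pi^c_{\Kscr}(x)\triangleq x-\Pi_{\Kscr}(x)$, inserts $\pm\big(y-\Pi_{\Kscr}(y)\big)$, and invokes the \emph{nonexpansivity} of the residual operator $\Pi^c_{\Kscr}$ to bound $\|\Pi^c_{\Kscr}(y+y')-\Pi^c_{\Kscr}(y)\|\leq\|y'\|$; that argument genuinely uses convexity of $\Kscr$ (nonexpansivity of $\Pi^c_{\Kscr}$ can fail for nonconvex sets). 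Your version is shorter, avoids appealing to an external nonexpansivity fact, and proves a slightly more general statement; the paper's version has the minor virtue of exercising the operator $\Pi^c_{\Kscr}$ that reappears elsewhere in its analysis. Either proof is acceptable here.
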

We now derive the bound on the primal infeasibility. 
\begin{theorem}[\textbf{Bound on primal infeasibility}]
\label{Theorem:Aug_prim_infeas_cons_rho}
{Under the premise of 
Theorem~\ref{Theorem: Aug_lambda_bnd_cont_rho}}, let
$\{\lambda_k\}_{k\geq 0}$ and $\{x_k\}_{k\geq 0}$ denote the sequences generated by
{IPALM}. Furthermore, let {$\xbar_k\triangleq{1\over k}{\sum_{i=1}^{k} x_i}$}. Then, {for all $k\geq 1$}, it follows that
\begin{align}
d_{-\Kscr}\Big(h(\xbar_k,\theta^*)\Big)\leq{\cal V}(k)\triangleq \frac{C_1}{\sqrt{k}}+\frac{C_2}{k},\label{Def:V(k)}
\end{align}
where 
$C_1 \triangleq
\sqrt{\frac{2B_g}{\rho}+\left(\frac{C_\lambda}{\rho}\right)^2}$ and $C_2
\triangleq \sqrt{\tfrac{2}{\rho}}~\sum_{i=0}^\infty
	\sqrt{\alpha_i} + (L_{h,\theta}+\bar{\kappa}) \|\theta_0-\theta^*\|/ (1-\tau)$.
\end{theorem}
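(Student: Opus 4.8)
The plan is to write $h(\bar{x}_k;\theta^*)$ as a sum of three pieces --- a vector lying in $-\Kscr$ (hence invisible to $d_{-\Kscr}$), an averaged ``clean'' dual-gradient term, and error terms from inexact subproblem solves and from misspecification --- and then to control the averaged gradient term by recycling the dual-suboptimality analysis of Theorem~\ref{Theorem:Aug_bnd_dual_sub}.

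Since $h(\cdot\,;\theta^*)$ is affine in $x$, we have $h(\bar{x}_k;\theta^*)=\frac{1}{k}\sum_{i=0}^{k-1}h(x_{i+1};\theta^*)$, where $x_{i+1}$ is produced in Step~1 of Algorithm~\ref{Alg: ALM} from $(\lambda_i,\theta_i)$. For each $i$ I would split $h(x_{i+1};\theta^*)=h(x_{i+1};\theta_i)+\eta_i$ with $\|\eta_i\|\le L_{h,\theta}\|\theta_i-\theta^*\|$ (Remark~\ref{rem:Lipschitz-h}), use the gradient identity~\eqref{Grad_l} to write $h(x_{i+1};\theta_i)={\cal G}_i+\Pi_{-\Kscr}\!\big(\tfrac{\lambda_i}{\rho}+h(x_{i+1};\theta_i)\big)$ with ${\cal G}_i:=\nabla_\lambda{\cal L}_\rho(x_{i+1},\lambda_i;\theta_i)$ and the projection term automatically in $-\Kscr$, and then peel ${\cal G}_i$ into $\nabla_\lambda g_\rho(\lambda_i;\theta^*)+\delta_i+s_i$, where $s_i:={\cal G}_i-\nabla_\lambda g_\rho(\lambda_i;\theta_i)$ satisfies $\|s_i\|\le\sqrt{2\alpha_i/\rho}$ by Lemma~\ref{Lemma: Aug_Rock_Lemmas}(iii) (Step~1 provides the required accuracy) and $\delta_i:=\nabla_\lambda g_\rho(\lambda_i;\theta_i)-\nabla_\lambda g_\rho(\lambda_i;\theta^*)$ satisfies $\|\delta_i\|\le\kappa\|\theta_i-\theta^*\|$ by Proposition~\ref{Lips_contin_grad_g}(ii). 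Averaging over $i$ and using that $-\Kscr$ is a convex cone (so the averaged projection terms remain in $-\Kscr$), a double application of Lemma~\ref{Lemma:d_triangle} with the cone $-\Kscr$ gives
\[ d_{-\Kscr}\big(h(\bar{x}_k;\theta^*)\big)\ \le\ \Big\|\tfrac{1}{k}\sum_{i=0}^{k-1}\nabla_\lambda g_\rho(\lambda_i;\theta^*)\Big\|+\tfrac{1}{k}\sum_{i=0}^{k-1}\big(\|\delta_i\|+\|s_i\|+\|\eta_i\|\big). \]
The error sum is at most $\tfrac{1}{k}\big(\sqrt{2/\rho}\sum_{i\ge0}\sqrt{\alpha_i}+(L_{h,\theta}+\kappa)\sum_{i\ge0}\|\theta_i-\theta^*\|\big)$, which is precisely $C_2/k$ (with $L_h=L_{h,\theta}$) once one invokes summability of $\sqrt{\alpha_i}$ (Assumption~\ref{Assump:Aug_2}) and the geometric bound $\sum_{i\ge0}\|\theta_i-\theta^*\|\le\|\theta_0-\theta^*\|/(1-\tau)$ (Assumption~\ref{Assump: learn_lin_rate}).

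The crux is bounding the first term by $C_1/\sqrt{k}$. Here I would apply Jensen's inequality, $\big\|\tfrac{1}{k}\sum_{i=0}^{k-1}\nabla_\lambda g_\rho(\lambda_i;\theta^*)\big\|^2\le\tfrac{1}{k}\sum_{i=0}^{k-1}\|\nabla_\lambda g_\rho(\lambda_i;\theta^*)\|^2$, then Lemma~\ref{Lemma:Aug_grad_bnd} with $\phi=g_\rho(\cdot\,;\theta^*)$ (concave, $\sup\phi=f^*$ by zero duality gap and Lemma~\ref{Lemma: Aug_Rock_Lemmas}, and $\nabla\phi$ Lipschitz with constant $1/\rho$ by Lemma~\ref{Lemma: Aug_Rock_Lemmas}(i)) to obtain $\|\nabla_\lambda g_\rho(\lambda_i;\theta^*)\|^2\le\tfrac{2}{\rho}\big(f^*-g_\rho(\lambda_i;\theta^*)\big)$. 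It then remains to show $\sum_{i=0}^{k-1}\big(f^*-g_\rho(\lambda_i;\theta^*)\big)\le B_g+\tfrac{C_\lambda^2}{2\rho}$: the terms with $i\ge1$ sum to at most $B_g$, which is exactly inequality~\eqref{proof: bnd_dual_sub_opt} from the proof of Theorem~\ref{Theorem:Aug_bnd_dual_sub} read \emph{before} dividing by $k$ and discarding the nonnegative term $\tfrac{1}{2\rho}\|\lambda_k-\lambda^*\|^2$ on its left-hand side (bounding $\sum(\|\delta_i\|+\|s_i\|)\|\lambda_{i+1}-\lambda^*\|$ through $\|\lambda_{i+1}-\lambda^*\|\le C_\lambda$, $\|s_i\|\le\sqrt{2\alpha_i/\rho}$, $\|\delta_i\|\le\kappa\|\theta_i-\theta^*\|$ and the definition of $B_g$), while the $i=0$ term is at most $\tfrac{1}{2\rho}\|\lambda_0-\lambda^*\|^2\le\tfrac{C_\lambda^2}{2\rho}$ by $(1/\rho)$-smoothness of $-g_\rho(\cdot\,;\theta^*)$ at its minimizer $\lambda^*$ together with $\|\lambda_0-\lambda^*\|\le C_\lambda$ (immediate from~\eqref{bd-C-lambda}). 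Substituting back gives $\big\|\tfrac{1}{k}\sum_{i=0}^{k-1}\nabla_\lambda g_\rho(\lambda_i;\theta^*)\big\|^2\le\tfrac{2B_g}{\rho k}+\tfrac{C_\lambda^2}{\rho^2 k}=C_1^2/k$, which completes the estimate.

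The main obstacle is recognizing that the summability of the dual suboptimalities $\{f^*-g_\rho(\lambda_i;\theta^*)\}_i$ --- which is what turns the crude Jensen bound $\tfrac{1}{k}\sum\|\cdot\|^2$ into an $\mathcal{O}(1/k)$ quantity, hence $\mathcal{O}(1/\sqrt{k})$ after taking square roots --- need not be re-derived but can be extracted verbatim from the telescoped inequality already established in the dual-rate analysis; everything else is careful accounting of the Lipschitz, inexactness, and cone-projection terms together with the triangle-type inequality of Lemma~\ref{Lemma:d_triangle}.
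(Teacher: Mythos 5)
Your proposal is correct and follows essentially the same route as the paper's proof: the same decomposition of $h(x_{i+1};\theta^*)$ via the gradient identity~\eqref{Grad_l} into a piece in $-\Kscr$ plus $\nabla_\lambda{\cal L}_\rho$ plus Lipschitz/inexactness errors, the same use of Lemma~\ref{Lemma:d_triangle} and Lemma~\ref{Lemma:Aug_grad_bnd}, and the same recycling of the telescoped inequality~\eqref{proof: bnd_dual_sub_opt} to show that the dual suboptimalities sum to $B_g+\tfrac{C_\lambda^2}{2\rho}$. The only (immaterial) differences are that you apply Jensen to the squared norm of the averaged gradient where the paper averages the norms and uses concavity of $\sqrt{\cdot}$, and that you absorb the index shift by isolating the $i=0$ term rather than adding and subtracting $g_\rho(\lambda_0;\theta^*)$ and $g_\rho(\lambda_k;\theta^*)$; both yield the identical constants $C_1$ and $C_2$.
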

\begin{IEEEproof}
Let {$u_i\triangleq\nabla_\lambda{\cal L}_\rho(x_{i+1},\lambda_i;\theta_i)$
	for all $i\geq 0$. Note 
\begin{align*}
h(x_{i+1};\theta^*)=&h(x_{i+1};\theta^*)\\
	& +\underbrace{u_i-h\left(x_{i+1};\theta_i\right)+\Pi_{-\Kscr}\left(\tfrac{\lambda_i}{\rho}+h\left(x_{i+1};\theta_i\right)\right)}_{\mbox{\scriptsize Term
				1}},\end{align*}
{since}~\eqref{Grad_l} implies that Term 1 $= 0$.
Hence, using Lemma~\ref{Lemma:d_triangle}, 
\begin{align}
\label{eq:feasibility-bound-1}
& d_{-\cK}\big(h(x_{i+1};\theta^*)\big) \notag \\
\leq & d_{-\cK}\left(\Pi_{-\Kscr}\big(\tfrac{\lambda_i}{\rho}+h\left(x_{i+1};\theta_i\right)\big)\right)\notag \\
	& +\norm{u_i+h(x_{i+1};\theta^*)-h\left(x_{i+1};\theta_i\right)},\quad \forall\ i\geq 0.
\end{align}
Under Assumption~\ref{Assump: Aug_1}.i (from
		Remark~\ref{rem:Lipschitz-h}), we have
$\norm{ h(x_{i+1},\theta_i) - h(x_{i+1},\theta^*)} \leq {L_{h,\theta}} \|\theta_i - \theta^*\|$ for all $i\geq 0$. Moreover, since $\Pi_{-\cK}(y)\in-\cK$ for all $y\in\reals^m$ and $d_{-\cK}(y)=0$ for all $y\in -\cK$, it follows from \eqref{eq:feasibility-bound-1} that
{
\begin{equation*}
d_{-\cK}\big(h(x_{i+1};\theta^*)\big)\leq \norm{u_i}+L_{h,\theta} \norm{\theta^*-\theta_i},\ \forall i\geq 0.
\end{equation*}}%
Since $h(\cdot;\theta^*)$ is an affine function in $x$ and $d_{-\cK}(\cdot)$ is a convex function, their composition $d_{-\cK}\big(h(\cdot;\theta^*)\big)$ is a convex function as well; therefore, from Jensen's inequality we get \vspace*{-2mm}
{\
\begin{align}
d_{-\Kscr}\Big(h(\xbar_k,\theta^*)\Big)&\leq {1\over k} \sum_{i=0}^{k-1}
d_{-\cK}\big(h(x_{i+1};\theta^*)\big) \notag\\
& \leq \frac{1}{k}\sum_{i=0}^{k-1} \left(\|u_i\|+ {L_{h,\theta}} \|\theta_{i}-\theta^*\|\right). \label{proof:bnd_prim_infea_2}
\end{align}}%
}
Recall that from Lemma~\ref{Lemma: Aug_Rock_Lemmas}.iii, for $i=0,\hdots,k-1,$
{\
$$\Big\|\nabla_{\lambda} {\cal
	L}_\rho(x_{i+1},\lambda_i;\theta_{i})-\nabla_{\lambda}
	g_\rho(\lambda_i;\theta_{i})\Big\|\leq \sqrt{\tfrac{2\alpha_i}{\rho}};$$}%
therefore, it follows that 
$\|u_i\|=\|\nabla_{\lambda} {\cal
	L}_\rho(x_{i+1},\lambda_i;\theta_{i})\|\leq \|\nabla_{\lambda}
	g_\rho(\lambda_i;\theta_{i})\|+\sqrt{2\alpha_i/\rho}$. In addition, since  $\|\nabla_{\lambda} g_\rho(\lambda_i;\theta_{i})\|$ $\leq \|\nabla_{\lambda} g_\rho(\lambda_i;\theta^*)\|+{\bar{\kappa}}\|\theta_{i}-\theta^*\|$, we get 
{\
$$\|u_i\|\leq \|\nabla_{\lambda}
g_\rho(\lambda_i;\theta^*)\|+\sqrt{2\alpha_i/\rho}+{\bar{\kappa}}\|\theta_{i}-\theta^*\|.$$}%
On the other hand, by Lemma~\ref{Lemma:Aug_grad_bnd}, we have
$ \|\nabla_{\lambda} g_\rho(\lambda_i;\theta^*)\|\leq \sqrt{ \tfrac{2}{\rho} \left(f^*-g_\rho(\lambda_i;\theta^*)\right)}$; hence,
{\
\begin{align}
\|u_i\| & \leq \sqrt{\tfrac{2}{\rho}
	\Big(f^*-g_\rho(\lambda_i;\theta^*)\Big)}+\sqrt{\tfrac{2\alpha_i}{\rho}}+{\bar{\kappa}}\|\theta_{i}-\theta^*\|.\notag
\end{align}}%
By substituting this bound into~\eqref{proof:bnd_prim_infea_2} and using the concavity of square-root function $\sqrt{\cdot}$, we get
{\small
\begin{align}
d_{-\Kscr}\Big(h(\xbar_k,\theta^*)\Big) 
\leq & \sqrt{ \frac{2}{\rho} \Big(f^*-{1\over k}\sum_{i=0}^{k-1}
		g_\rho(\lambda_i;\theta^*)\Big)} \notag\\
&+{1\over k}\big(\sum_{i=0}^{k-1} \sqrt{\frac{2\alpha_i}{\rho}}+{(L_{h,\theta}+\bar{\kappa})}\sum_{i=0}^{k-1} \|\theta_{i}-\theta^*\|\big),
\label{proof:bnd_prim_infea_5}
\end{align}}%
The first term in
\eqref{proof:bnd_prim_infea_5} can be bounded using \eqref{proof: bnd_dual_sub_opt} and \eqref{proof:bnd_subopt_1}, 
\begin{align*}
f^*-{1\over k}\sum_{i=0}^{k-1}g_\rho(\lambda_i;\theta^*)\leq  {1 \over k}\big(B_g+g_\rho(\lambda_0;\theta^*)-g_\rho(\lambda_{k};\theta^*)\big).
\end{align*}
Note $g_\rho(\lambda_0;\theta^*)- f^*\leq 0$, and Lipschitz continuity of $\nabla g_\rho$ implies $f^*-g_\rho(\lambda_{k};\theta^*)\leq\tfrac{1}{2\rho}\|\lambda_{k}-\lambda^*\|^2\leq \tfrac{1}{2\rho} C_\lambda^2$.
The 
other terms in
\eqref{proof:bnd_prim_infea_5} can 
be bounded as 
{\
\begin{eqnarray}
\lefteqn{{1\over k}\Big(\sum_{i=0}^{k-1} \sqrt{\frac{2\alpha_i}{\rho}} +
(L_{h,\theta}+\kappa)\sum_{i=0}^{k-1} \|\theta_{i}-\theta^*\|\Big)} \notag \\
&& \leq
{1\over k}\Big[\sum_{i=0}^\infty \sqrt{\frac{2\alpha_i}{\rho}}
		 +{ {(L_{h,\theta}+\bar{\kappa})}
	\|\theta_0-\theta^*\| \over 1-\tau}\Big].\label{proof:bnd_prim_infea_4}
\end{eqnarray}
}%
The result follows by incorporating these bounds 
into \eqref{proof:bnd_prim_infea_5}.
\end{IEEEproof}
Next, we derive 
bounds on
$f(\xbar_k;\theta^*)-f^*$. In contrast with 
the \emph{unconstrained} 
optimization, $f(\xbar_k;\theta^*)$ could be less than
$f^*$, due to infeasibility of $\xbar_k$.

\begin{theorem}[{\bf Bounds on primal suboptimality}]
\label{Theorem: Aug_prim_low_bnd_cons_rho}
{Under the premise of 
Theorem~\ref{Theorem: Aug_lambda_bnd_cont_rho}}, let $\{x_k\}$ and $\{\lambda_k\}$ be the
sequences generated by 
{IPALM}. In addition,
let $\xbar_k={1\over k}\sum_{i=1}^k x_k$. Then {for all $k\geq 1$,} the following {bounds} hold:
\begin{align}
f(\xbar_k;\theta^*)-f^* & \geq -{\rho \over 2} {\cal
	V}^2(k)-\|\lambda^*\|{\cal V}(k),\\ 
f(\xbar_k;\theta^*)- f^* & \leq \frac{U}{k},
\end{align}
for any $\lambda^*\in\argmax\, g_0(\lambda,\theta^*)$, where ${\cal V}(k)$ is defined in Theorem~\ref{Theorem:Aug_prim_infeas_cons_rho},
$U\triangleq \sum_{i=0}^\infty
\alpha_i+{\tfrac{1}{2}\norm{\lambda_0}^2}+\tfrac{\rho}{2}{L^2_{h,\theta}}\frac{\|\theta_0-\theta^*\|^2}{1-\tau^2}+\left(\bar{C}{L_{h,\theta}}+2{L_{f,\theta}}\right)\frac{\|\theta_0-\theta^*\|}{1-\tau}$,
	and $\bar{C}\triangleq C_\lambda+ \|\lambda^*\|.$
\end{theorem}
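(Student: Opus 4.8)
The plan is to establish the two inequalities separately. Both arguments rest on three already-proven facts: (a) the strong-duality identity $f^*=\max_\lambda g_\rho(\lambda;\theta^*)=g_\rho(\lambda^*;\theta^*)=\min_{x\in X}{\cal L}_\rho(x,\lambda^*;\theta^*)$ for every $\rho>0$ (Lemma~\ref{Lemma: Aug_Rock_Lemmas}(i) together with zero duality gap, as used in the proof of Theorem~\ref{Theorem:Aug_bnd_dual_sub}); (b) the uniform bound $\|\lambda_k-\lambda^*\|\leq C_\lambda$ from Proposition~\ref{Theorem: Aug_lambda_bnd_cont_rho}, hence $\|\lambda_k\|\leq \bar C=C_\lambda+\|\lambda^*\|$; and (c) the Lipschitz-in-$\theta$ estimates on $f$, $h$ and $g_\rho$ from Assumption~\ref{Assump: Aug_1}, Remark~\ref{rem:Lipschitz-h} and Proposition~\ref{Lips_contin_grad_g}.

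For the lower bound, I would start from $f^*=g_\rho(\lambda^*;\theta^*)\leq{\cal L}_\rho(\bar{x}_k,\lambda^*;\theta^*)$, which is valid since $\bar{x}_k\in X$ by convexity of $X$. Plugging the feasible slack $z=-\Pi_{-\Kscr}\big(h(\bar{x}_k;\theta^*)\big)\in\Kscr$ into the definition ${\cal L}_\rho(x,\lambda;\theta)=\min_{z\in\Kscr}[f(x;\theta)+\lambda^\top(h(x;\theta)+z)+\tfrac{\rho}{2}\|h(x;\theta)+z\|^2]$ makes the residual $h(\bar{x}_k;\theta^*)+z$ have norm exactly $d_{-\Kscr}\big(h(\bar{x}_k;\theta^*)\big)\leq{\cal V}(k)$ by Theorem~\ref{Theorem:Aug_prim_infeas_cons_rho}. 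Bounding the linear term in $\lambda^*$ by Cauchy--Schwarz and rearranging gives $f(\bar{x}_k;\theta^*)-f^*\geq -\|\lambda^*\|\,d_{-\Kscr}\big(h(\bar{x}_k;\theta^*)\big)-\tfrac{\rho}{2}d^2_{-\Kscr}\big(h(\bar{x}_k;\theta^*)\big)$; since $t\mapsto-\|\lambda^*\|t-\tfrac{\rho}{2}t^2$ is nonincreasing on $[0,\infty)$, substituting the upper bound ${\cal V}(k)$ for $d_{-\Kscr}\big(h(\bar{x}_k;\theta^*)\big)$ yields the claimed inequality.

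For the upper bound, the first step is the identity ${\cal L}_\rho(x_{i+1},\lambda_i;\theta_i)=f(x_{i+1};\theta_i)+\tfrac{1}{2\rho}\big(\|\lambda_{i+1}\|^2-\|\lambda_i\|^2\big)$, obtained by combining~\eqref{Aug_L_forrho} with~\eqref{Grad_l} and the update $\lambda_{i+1}=\lambda_i+\rho u_i$, where $u_i\triangleq\nabla_\lambda{\cal L}_\rho(x_{i+1},\lambda_i;\theta_i)$: indeed~\eqref{Grad_l} gives $\Pi_{-\Kscr}(\tfrac{\lambda_i}{\rho}+h(x_{i+1};\theta_i))=h(x_{i+1};\theta_i)-u_i$, so $d^2_{-\Kscr}\big(h(x_{i+1};\theta_i)+\tfrac{\lambda_i}{\rho}\big)=\|\tfrac{\lambda_i}{\rho}+u_i\|^2=\tfrac{1}{\rho^2}\|\lambda_{i+1}\|^2$. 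Next I would bound $g_\rho(\lambda_i;\theta_i)\leq{\cal L}_\rho(x^*,\lambda_i;\theta_i)$ for any $x^*$ optimal for ${\cal C}(\theta^*)$ (so $h(x^*;\theta^*)\in-\Kscr$), and insert the feasible slack $z=-h(x^*;\theta^*)\in\Kscr$ into the definition of ${\cal L}_\rho$; using $\|h(x^*;\theta_i)-h(x^*;\theta^*)\|\leq L_h\|\theta_i-\theta^*\|$, $|f(x^*;\theta_i)-f^*|\leq L_f\|\theta_i-\theta^*\|$ and $\|\lambda_i\|\leq\bar C$ gives $g_\rho(\lambda_i;\theta_i)\leq f^*+(L_f+\bar C L_h)\|\theta_i-\theta^*\|+\tfrac{\rho}{2}L_h^2\|\theta_i-\theta^*\|^2$. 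Feeding this and the identity into the inexactness inequality ${\cal L}_\rho(x_{i+1},\lambda_i;\theta_i)\leq g_\rho(\lambda_i;\theta_i)+\alpha_i$ from Step~1 of Algorithm~\ref{Alg: ALM}, and replacing $f(x_{i+1};\theta_i)$ by $f(x_{i+1};\theta^*)-L_f\|\theta_i-\theta^*\|$, produces a per-iteration estimate for $f(x_{i+1};\theta^*)+\tfrac{1}{2\rho}\big(\|\lambda_{i+1}\|^2-\|\lambda_i\|^2\big)$. Summing over $i=0,\dots,k-1$ telescopes the $\|\lambda\|^2$ terms; after discarding $\|\lambda_k\|^2\geq 0$, bounding $\sum_{i\geq 0}\|\theta_i-\theta^*\|\leq\|\theta_0-\theta^*\|/(1-\tau)$ and $\sum_{i\geq 0}\|\theta_i-\theta^*\|^2\leq\|\theta_0-\theta^*\|^2/(1-\tau^2)$ via Assumption~\ref{Assump: learn_lin_rate}, and using $\sum_{i\geq 0}\alpha_i<\infty$ (which follows from Assumption~\ref{Assump:Aug_2} since $\sqrt{\alpha_i}\to 0$), one divides by $k$ and applies Jensen's inequality to the convex map $f(\cdot;\theta^*)$ to conclude $f(\bar{x}_k;\theta^*)\leq f^*+U/k$.

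The main obstacle is the upper bound, and within it the delicate point of controlling $g_\rho(\lambda_i;\theta_i)$ --- a quantity governed by the \emph{misspecified} iterate $\theta_i$ --- by the \emph{true} optimal value $f^*=f(x^*;\theta^*)$ rather than by the optimal value of ${\cal C}(\theta_i)$. This is resolved by transporting the feasibility of an optimal $x^*$ for ${\cal C}(\theta^*)$ into the $\theta_i$-parametrized augmented Lagrangian via the explicit slack choice $z=-h(x^*;\theta^*)$, and then absorbing every resulting mismatch term using the Lipschitz-in-$\theta$ constants $L_f,L_h$ and the uniform dual bound $\bar C$; the accumulated error over $k$ iterations remains finite precisely because the learning scheme converges at a linear rate and $\{\alpha_i\}$ (indeed $\{\sqrt{\alpha_i}\}$) is summable.
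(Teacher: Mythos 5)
Your proposal is correct and follows essentially the same route as the paper: the lower bound comes from $f^*\leq {\cal L}_\rho(\bar{x}_k,\lambda^*;\theta^*)$ combined with the infeasibility bound ${\cal V}(k)$, and the upper bound comes from the Step~1 inexactness compared against $x^*$, the identity $d_{-\Kscr}\big(h(x_{i+1};\theta_i)+\lambda_i/\rho\big)=\|\lambda_{i+1}\|/\rho$, the Lipschitz-in-$\theta$ estimates with $h(x^*;\theta^*)\in-\Kscr$, telescoping, and Jensen's inequality, yielding the identical per-iteration inequality and constants. Your use of explicit slack choices in the $\min_{z\in\Kscr}$ form of ${\cal L}_\rho$ in place of the paper's triangle inequality for $d_{-\Kscr}$ (Lemma~\ref{Lemma:d_triangle}) is only a cosmetic variation.
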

\begin{IEEEproof}
{We give the proof in two parts below.}\\
\noindent {\bf Proof of the lower bound:} Since $\sup_\lambda g_\rho(\lambda;\theta^*)=\min_{x\in X} {\cal L}_\rho(x,\lambda^*;\theta^*)=f^*$, we have that for all $k\geq 1$,
{\
\begin{align*}
f^*&\leq {\cal L}_\rho(\xbar_k,\lambda^*;\theta^*)\nonumber\\
&=f(\xbar_k;\theta^*)+{\rho\over
		2}d^2_{-\Kscr}\Big(h(\xbar_k;\theta^*)+{\lambda^*\over
				\rho}\Big) -{\|\lambda^*\|^2\over 2\rho}\\
&\leq f(\xbar_k;\theta^*)+{\rho\over
	2}\left(d_{-\Kscr}\big(h(\xbar_k;\theta^*)\big)+{\|\lambda^*\|\over \rho}\right)^2-{\|\lambda^*\|^2\over 2\rho},
\end{align*}}%
where the first equality is a consequence of \eqref{Aug_L_forrho} while
the second inequality follows from Lemma~\ref{Lemma:d_triangle}. By expanding the second term above inequality, we obtain
\begin{align*}
f^*  
&\leq f(\xbar_k;\theta^*)+\tfrac{\rho}{2}d^2_{-\Kscr}\big(h(\xbar_k;\theta^*)\big)+d_{-\Kscr}\big(h(\xbar_k;\theta^*)\big)\|\lambda^*\|\\
&\leq f(\xbar_k;\theta^*)+{\rho \over 2} {\cal V}^2(k)+\|\lambda^*\|{\cal V}(k),
\end{align*}
where the last inequality follows from
Theorem~\ref{Theorem:Aug_prim_infeas_cons_rho}. 

\noindent {\bf Proof of the upper bound:} Let $x^*$ be an optimal solution to $\mathcal{C}(\theta^*)$, {i.e., $x^*\in X^*(\theta^*)$}. Step~1 of 
{IPALM} implies
 ${\cal L}_\rho(x_{i+1},\lambda_i;\theta_i)
 \leq  {\cal L}_\rho(x^*,\lambda_i;\theta_i)+\alpha_i$ for all $i\geq 0$.
Hence, by the definition of ${\cal L}_\rho$, it follows that
{\
\begin{align}
f(x_{i+1};\theta_i)-f(x^*;\theta_i) \leq &
\tfrac{\rho}{2}d^2_{-\Kscr}\left(h(x^*;\theta_i)+{\lambda_i\over
		\rho}\right) \label{proof:primal_sub_opt_cons_rho1}\\
& -\tfrac{\rho}{2}d^2_{-\Kscr}\left(h(x_{i+1};\theta_i)+{\lambda_i\over
		\rho}\right)+\alpha_i. \notag
\end{align}}%
Step~2 of 
{IPALM} {and \eqref{eq:lambda-relation} imply that}
{\
\begin{align}
d_{-\Kscr}\left(h(x_{i+1};\theta_i)+{\lambda_i\over \rho}\right)  ={\|\lambda_{i+1}\|\over \rho}.
\label{proof:primal_sub_opt_cons_rho2}
\end{align}}%
In addition, by using Lemma~\ref{Lemma:d_triangle}, it follows that
{\
\begin{align}
d_{-\Kscr}\left(h(x^*;\theta_i)+{\lambda_i\over \rho}\right)\leq d_{-\Kscr}\big(h(x^*;\theta_i)\big)+{\|\lambda_i\|\over \rho}.
\label{proof:primal_sub_opt_cons_rho3}
\end{align}}%
Substituting \eqref{proof:primal_sub_opt_cons_rho2}
and \eqref{proof:primal_sub_opt_cons_rho3} in \eqref{proof:primal_sub_opt_cons_rho1}, we obtain for all $i\geq 0$
{\
\begin{eqnarray}
\lefteqn{f(x_{i+1};\theta_i)-f(x^*;\theta_i)} \notag\\
& \leq &
 \tfrac{\rho}{2}~d^2_{-\Kscr}\big(h(x^*;\theta_i)\big)+\|\lambda_i\|~d_{-\Kscr}\big(h(x^*;\theta_i)\big)\nonumber \\
& & +\tfrac{1}{2\rho}\big(\|\lambda_i\|^2-\|\lambda_{i+1}\|^2\big)+\alpha_i.
\label{proof:primal_sub_opt_cons_rho5}
\end{eqnarray}}%
{According to Remark~\ref{rem:Lipschitz-h}, we have $\norm{
	h(x^*;\theta_i) - h(x^*;\theta^*)} \leq {L_{h,\theta}} \|\theta_i -
		\theta^*\|$; hence, we have
		$h(x^*;\theta_i)=h(x^*;\theta^*)+v_i$ for some $v_i\in
		L_{h,\theta}\norm{\theta_i-\theta^*}~\cB(\mathbf{0},1)$ for all
		$i\geq 0$. Using Lemma~\ref{Lemma:d_triangle}, for each $i\geq
		0$, we get $d_{-\cK}\big(h(x^*;\theta_i)\big)\leq
		d_{-\cK}\big(h(x^*;\theta^*)\big)+\norm{v_i}$; thus, since
		$x^*\in X^*(\theta^*)$, we have {$h(x^*;\theta^*)\ineq\mathbf{0}$}, and
		this implies
\begin{equation}
\label{proof:primal_sub_opt_cons_rho4}
d_{-\cK}\big(h(x^*;\theta_i)\big)\leq L_{h,\theta}\norm{\theta_i-\theta^*},
\end{equation}
for $i\geq 0$.}
By substituting \eqref{proof:primal_sub_opt_cons_rho4} into~\eqref{proof:primal_sub_opt_cons_rho5}, we get for all $i\geq 0$
\begin{eqnarray}
\lefteqn{f(x_{i+1};\theta_i)-f(x^*;\theta_i)}\notag \\
&& \leq\tfrac{\rho}{2}{L^2_{h,\theta}}\|\theta_i-\theta^*\|^2+{L_{h,\theta}}\|\theta_i-\theta^*\|\|\lambda_i\|\nonumber\\
&&\quad +\tfrac{1}{2\rho}\left(\|\lambda_i\|^2-\|\lambda_{i+1}\|^2\right)+\alpha_i\notag\\
&&\leq\tfrac{\rho}{2}{L^2_{h,\theta}}\|\theta_i-\theta^*\|^2+\bar{C}{L_{h,\theta}}\|\theta_i-\theta^*\|\notag \\
&&\quad +\tfrac{1}{2\rho}\left(\|\lambda_i\|^2-\|\lambda_{i+1}\|^2\right)+\alpha_i,
\end{eqnarray}
where the last inequality follows from $\|\lambda_i-\lambda^*\| \leq C_{\lambda}$
(
Theorem~\ref{Theorem: Aug_lambda_bnd_cont_rho}), i.e.,
	$\|\lambda_i\|\leq \bar{C}\triangleq C_\lambda+\|\lambda^*\|$ for all $i\geq
	0$. Next, from the Lipschitz continuity of $f$ in $\theta$ {by Assumption~\ref{Assump: Aug_1}.ii}, 
$$f(x_{i+1};\theta_i)-f(x^*;\theta_i)\geq f(x_{i+1};\theta^*)-f(x^*;\theta^*)-2{L_{f,\theta}}\|\theta_i-\theta^*\|.$$
Combining two above inequalities results in the following:
{\
\begin{align*}
\notag
f(x_{i+1};\theta^*)-f^* \leq &
\tfrac{\rho}{2}{L^2_{h,\theta}}\|\theta_i-\theta^*\|^2+\left(\bar{C}{L_{h,\theta}}+2{L_{f,\theta}}\right)\|\theta_i-\theta^*\|\\
& +{1\over 2\rho}\Big(\|\lambda_i\|^2-\|\lambda_{i+1}\|^2\Big)+\alpha_i.
\end{align*}}%
Summing the above inequality for $i=0$ to $k-1$, we obtain 
{\small
\begin{align*}
\sum_{i=0}^{k-1}\Big(f(x_{i+1};\theta^*)-f^* \Big)
\leq & \sum_{i=0}^\infty
\alpha_i{+\frac{1}{2\rho}\norm{\lambda_0}^2}
+\tfrac{\rho}{2}{L^2_{h,\theta}}\frac{\|\theta_0-\theta^*\|^2}{1-\tau^2} \\
	& +\left(\bar{C}{L_{h,\theta}}+2{L_{f,\theta}}\right)\frac{\|\theta_0-\theta^*\|}{1-\tau}
.
\end{align*}}%
Since $f(x;\theta^*)$ is convex in $x$, dividing both sides of the above inequality by $k$ gives the desired result.
\end{IEEEproof}
\vspace*{-2mm}
\subsection{Convergence analysis for increasing $\{\rho_k\}$}
\label{sec:rate-increasing}
In 1976, Rockafellar~\cite{rockafellar:76:ala} proposed several different
variants of inexact augmented Lagrangian schemes where the penalty parameter could
be updated between iterations, and under suitable summability conditions on the sequence $\{\alpha_k \rho_k\}$, it was established that the sequence of dual iterates $\{\lambda_k\}$ is bounded. In addition, \emph{upper} bounds on primal
suboptimality and infeasibility were derived. Recently, Aybat and Iyengar~\cite{aybat2013augmented} extended this result to conic convex
programs, provided \emph{both} upper and lower bounds on the suboptimality, and presented sequences $\{\rho_k\}$ and $\{\alpha_k\}$ under which the
primal function converges linearly to its optimal value. Necoara et
al.~\cite{1506.05320} also considered similar inexact schemes for conic convex programs  where the penalty parameter $\rho_k$ is
tuned adaptively. In this scheme, the {authors} apply a search procedure which effectively finds an
upper bound on $\norm{\lambda_0-\lambda^*}$ in logarithmic number of steps {and} in each search step, {a subproblem of the form $\min_{x\in X}\cL_{\rho_k}(x,\lambda;\theta)$ is inexactly solved to compute an $\epsilon$-optimal solution, where $\cL_{\rho_k}$ is defined by setting $\rho=\rho_k$ in \eqref{Aug_L_forrho}}. The search stops if an $\epsilon$-feasible solution is computed; otherwise, $\rho_k$ is doubled and a new subproblem is solved. Note that in contrast to our AL scheme, in~\cite{1506.05320}, each subproblem is solved to $\epsilon$-optimality for some small $\epsilon>0$, which will cause numerical issues in its practical implementations. In what follows, we analyze the convergence properties of 
{IPALM} {for an increasing
penalty sequence}. 

We initiate the analysis on convergence rate by first deriving a bound on primal infeasibility, which is subsequently used to derive bounds on primal suboptimality. These statements are then utilized in deriving the overall {computational} complexity in Section~\ref{sec:complexity-increasing}.
Let $\{\rho_k\}_{k\geq 0}\subset\reals_{++}$. 
{For each $k\geq 0$, $\cL_{\rho_k}(x,\lambda;\theta)$ is defined by setting $\rho=\rho_k$ in \eqref{Aug_L_forrho},}
i.e., $g_{\rho_k}(\lambda;\theta)=\inf_{x\in X} {\cal
	L}_{\rho_k}(x,\lambda;\theta)$. Since $g_\rho(\cdot;\theta)$ is the
	Moreau envelope of $g_0(\cdot;\theta)$ for any $\rho>0$, {both
		$g_{\rho_k}(\cdot;\theta)$ and $g_{0}(\cdot;\theta)$ have the
			same set of maximizers for any fixed $\theta\in\Theta$,
		i.e., $\argmax_{\lambda}
		g_{\rho_k}(\lambda;\theta)=\argmax_{\lambda}
		g_0(\lambda;\theta)$, for all $k\geq 0$. Moreover, it is also
			true that $\max_{\lambda}
		g_{\rho_k}(\lambda;\theta)=\max_{\lambda} g_0(\lambda;\theta)$;
		hence,} $f^*=\max_{\lambda} g_{\rho_k}(\lambda;\theta^*)$ for
		all $k\geq 0$.  
First, we show that 
$\{\lambda_k\}_{k\in\integers_+}$ stays bounded under Condition~\ref{Assump:rho_k}.
{
\begin{condition}
\label{Assump:rho_k} 
{\it $\{\alpha_k,\rho_k\}$ satisfy
{$\sum_{k=0}^{\infty} \sqrt{\alpha_k\rho_k} <\infty$}.}
\end{condition}}%
\begin{lemma}
\label{Theorem: Aug_bnd_lambda_increa_rho}
Let Assumptions~\ref{Assump: Aug_1} and~\ref{Assump: learn_lin_rate} hold. Given $\lambda_0\in\cK^*$, let $\{\lambda_k\}_{k\in\integers_+}$ be the
{IPALM} dual sequence when $\rho_k=\rho_0\beta^k$ for $k\geq 1$, where $\rho_0>0$ and $\beta>1$ such that $\beta \tau < 1$. 
If 
{Condition~\ref{Assump:rho_k} holds}, then $\sup_k\|\lambda_k-\lambda^*\|\leq C'_\lambda$, where
{\
$$C'_\lambda \triangleq\sum_{i=0}^\infty\sqrt{2\alpha_i\rho_i}+{ \rho_0 {\bar{\kappa}}\|\theta_0-\theta^*\|\over 1-\beta \tau}+\|\lambda_0-\lambda^*\|,$$
}%
and $\lambda^*$ is any point in $\Lambda^*\triangleq \argmax_{\lambda} g_0(\lambda;\theta^*).$
\end{lemma}
\begin{IEEEproof}
The proof immediately follows from \eqref{eq:aux-ineq-lambda-bound} in the proof of 
Theorem~\ref{Theorem: Aug_lambda_bnd_cont_rho}
\end{IEEEproof}

Next, we derive a bound on the primal infeasibility.
\begin{lemma}
\label{Theorem:primal_infeas_increa_rho}
{Under Assumption~\ref{Assump: Aug_1}, let $\{x_k,\lambda_k\}_{k\in\integers_+}$ be the primal-dual iterate sequence generated by 
{IPALM} for a given penalty sequence $\{\rho_k\}_{k\in\integers}$.} Then, for all $k\geq 0$, 
\begin{align*}
 d_{-\Kscr}\big(h(x_{k+1};\theta^*)\big)
 \leq  {\|\lambda_{k+1}-\lambda_k\|/ \rho_k} + {L_{h,\theta}} \|\theta_k-\theta^*\|. 
\end{align*}
\end{lemma}
\begin{IEEEproof}
From Step~2 of 
{IPALM} 
and \eqref{eq:lambda-relation}, 
\begin{align*}
h(x_{k+1};\theta_{k}) & ={\lambda_{k+1}-\lambda_k \over \rho_k} +\Pi_{-\Kscr}\Big(h(x_{k+1};\theta_{k})+{\lambda_k \over \rho_k}\Big). 
\end{align*}
Therefore, Lemma~\ref{Lemma:d_triangle} implies that
{\
\begin{align}
& \quad d_{-\Kscr}\big(h(x_{k+1};\theta_{k})\big) \label{proof:bnd_prime_infeas1}\\
& \notag \leq
{\|\lambda_{k+1}-\lambda_k\|\over
	\rho_k}  +d_{-\Kscr}\Big(\Pi_{-\Kscr}\big(h(x_{k+1};\theta_{k})+{\lambda_k \over \rho_k}\big)\Big).
\end{align}}%
{Note that 
$d_{-\Kscr}\left(\Pi_{-\Kscr}\left (h(x_k;\theta_{k})+{\lambda_k \over \rho_k}\right) \right)=0$} since $\Pi_{-\cK}(y)\in-\cK$ for all $y\in\reals^m$ and $d_{-\cK}(y)=0$ for all $y\in -\cK$.
Under Assumption~\ref{Assump: Aug_1}.i, in particular from Remark~\ref{rem:Lipschitz-h}, we also have
$\norm{ h(x_{i+1};\theta_i) - h(x_{i+1};\theta^*)} \leq {L_{h,\theta}} \|\theta_i - \theta^*\|$ for all $i\geq 0$.
Therefore, from 
Lemma~\ref{Lemma:d_triangle}, 
\vspace*{-2mm}
 \begin{align*}
 d_{-\Kscr}\big(h(x_{k+1};\theta^*)\big)&\leq
	 d_{-\Kscr}\big(h(x_{k+1};\theta_{k})\big)+{L_{h,\theta}}
 \|\theta_k-\theta^*\|\\
	 & \leq  {\|\lambda_{k+1}-\lambda_k\|/\rho_k} +{L_{h,\theta}} \|\theta_k-\theta^*\|.
 \end{align*}
\end{IEEEproof}
Our next {result} provides bounds on the primal sub-optimality.
{\begin{lemma}
\label{Theorem: Aug_low_bnd_increa_rho}
{Under Assumption~\ref{Assump: Aug_1}, let $\{x_k,\lambda_k\}_{k\in\integers_+}$ be the primal-dual iterate sequence generated by 
{IPALM} for a given penalty sequence $\{\rho_k\}_{k\in\integers}$. Then, for all $k\geq 1$, 
\begin{align}
f(x_{k+1};\theta^*)-f^*
\geq & -{1\over \rho_k}\left(\|\lambda_{k+1}\|
		+\|\lambda_k-\lambda^*\|\right)^2\notag \\
& -\rho_k{L^2_{h,\theta}}\|\theta_k-\theta^*\|^2, \label{sub-1} \\
f(x_{k+1};\theta^*)-f^*
\leq & {\|\lambda_k\|^2\over \rho_k}+2{L_{f,\theta}}\|\theta_{k}-\theta^*\| \notag \\
& + \rho_k {L^2_{h,\theta}}\|\theta_{k}-\theta^*\|^2+\alpha_k. \label{sub-2}
\end{align}}%
\end{lemma}
\begin{IEEEproof}
{We start by proving the lower bound in~\eqref{sub-1}, and then prove
	the upper bound in~\eqref{sub-2}.\\
\textbf{Proof of \eqref{sub-1}}: Since $f^*=\max_{\lambda}
	g_{\rho_k}(\lambda;\theta^*)$, $\lambda\in\Lambda^*$ implies 
		$\lambda^*\in\argmax g_{\rho_k}(\lambda;\theta^*)$.
		Consequently, we have $f^*=\min_{x\in X} {\cal L}_{\rho_k}(x,\lambda^*;\theta^*)$. Hence, for all $k\geq 0$,}
{\small
\begin{align}
f^*&\leq {\cal
	L}_{\rho_k}(x_{k+1},\lambda^*;\theta^*)\label{proof:Aug_low_bnd_pri_opt_1} \\
	& =f(x_{k+1};\theta^*)+{\rho_k\over 2}d^2_{-\Kscr}\Big(h(x_{k+1};\theta^*)+{\lambda^*\over \rho_k}\Big)-{\|\lambda^*\|^2\over 2\rho_k}. \nonumber
\end{align}
}
Using the triangle inequality of Lemma~\ref{Lemma:d_triangle}, we get
{\small
\begin{align}
\notag & {d_{-\Kscr}\big( h(x_{k+1};\theta^*)+{\lambda^*\over \rho_k }\big)}\\
\leq & \notag  d_{-\Kscr}\big( h(x_{k+1};\theta_k)+{\lambda_k\over \rho_k}\big)\\
& + \left\|h(x_{k+1};\theta^*) -h(x_{k+1};\theta_k)+{\lambda^*-\lambda_k \over \rho_k}\right\|\notag\\
 \leq & {1\over \rho_k}\big(\|\lambda_{k+1}\| +\|\lambda_k-\lambda^*\|\big)+L_{h,\theta}\|\theta_k-\theta^*\|\label{bound_on_d},
\end{align}}%
where for the second inequality, {we use the identity
$\norm{\lambda_{k+1}}=\rho_k~
	d_{-\cK}\big(h(x_{k+1};\theta_k)+\frac{\lambda_k}{\rho_k}\big)$,
	which follows from~\eqref{proof:bnd_prime_infeas1}, and
invoke the 
			Lipschitz continuity of function $h(x;\theta)$ in $\theta$ (see
		Remark~\ref{rem:Lipschitz-h}).}
Hence, using the identity $\|a+b\|^2\leq 2\|a\|^2+2\|b\|^2$ on~\eqref{bound_on_d} to bound $d^2_{-\Kscr}\Big( h(x_{k+1};\theta^*)+{\lambda^*\over \rho_k }\Big)$, and using the resulting bound within \eqref{proof:Aug_low_bnd_pri_opt_1}, we obtain
~\eqref{sub-1}.

\noindent {\bf Proof of \eqref{sub-2}}:
Step 1 of 
{IPALM} implies that
\begin{align*}
 {\cal L}_{\rho_k}(x_{k+1},\lambda_k;\theta_{k}) & \leq  \inf_{x\in X}
{\cal L}_{\rho_k}(x,\lambda_k;\theta_{k})+\alpha_k\\
		& \leq  {\cal L}_{\rho_k}(x^*,\lambda_k;\theta_{k})+\alpha_k.
 \end{align*}
\sa{Setting $\rho=\rho_k$ within \eqref{Aug_L_forrho} and using}
the fact that $d_{-\cK}(\cdot)\geq 0$, 
{
\begin{align}
f(x_{k+1};\theta_{k})-f(x^*;\theta_{k}) 
&\leq {\rho_k\over 2}d^2_{-\Kscr}\left(h(x^*;\theta_{k})+{\lambda_k\over
		\rho_k}\right)+\alpha_k.\label{proof:Aug_upp_bnd_pri_opt_1}
\end{align}
}
Using the triangle inequality of Lemma~\ref{Lemma:d_triangle} twice, we get
\begin{align}
d_{-\Kscr}\left( h(x^*;\theta_k)+{\lambda_k\over \rho_k }\right)
\leq & d_{-\Kscr}\left( h(x^*;\theta^*)\right)+{ \left\|\lambda_k
	\right\|\over \rho_k }\notag \\
& + \left\|h(x^*;\theta_k) -h(x^*;\theta^*)\right\|\notag\\
\leq & { \left\|\lambda_k \right\|\over \rho_k }+ L_{h,\theta}\|\theta_k -\theta^*\|,\label{proof:Aug_upp_bnd_pri_opt_2}
\end{align}
where 
in the third inequality we used the fact that $d_{-\Kscr}\left( h(x^*;\theta^*)\right)=0$ along with Lipschitz continuity of function $h(x^*;\theta)$ in $\theta$. By substituting~\eqref{proof:Aug_upp_bnd_pri_opt_2} into~\eqref{proof:Aug_upp_bnd_pri_opt_1} and again using the identity $\|a+b\|^2\leq 2\|a\|^2+2\|b\|^2$, we obtain 
\begin{align}
f(x_{k+1};\theta_{k})-f(x^*;\theta_{k})
 \leq & \rho_k {L^2_{h,\theta}}\|\theta_{k}-\theta^*\|^2\notag \\
		 &+{\|\lambda_k\|^2\over \rho_k}+\alpha_k. \label{subopt-1}
\end{align}
Using Assumption~\ref{Assump: Aug_1}.ii and \eqref{subopt-1}, we bound each {difference}
		term in the right hand side of the following equality,
{\
\begin{align*}
f(x_{k+1};\theta^*)-f^*= & f(x_{k+1};\theta^*)-f(x_{k+1};\theta_k)\notag \\
	& +f(x_{k+1};\theta_{k})-f(x^*;\theta_{k})\notag \\
    & + f(x^*;\theta_{k})-f(x^*;\theta^*).
\end{align*}}%
to obtain the 
upper bound given in \eqref{sub-2}.\
\end{IEEEproof}
}
We conclude this subsection with a formal rate statement for the sub-optimality and the infeasibility of 
$\{x_k\}$. The proposed sequences of $\rho_k$ and $\alpha_k$ in
Theorem~\ref{Theorem:Aug_increas_rho_linear_rate} are extensions of those presented
in~\cite{aybat2013augmented} and take learning into consideration. 
\begin{theorem}
\label{Theorem:Aug_increas_rho_linear_rate}
{Under Assumptions~\ref{Assump: Aug_1} and \ref{Assump: learn_lin_rate},
let $\{x_k,\lambda_k\}_{k\in\integers_+}$ be the 
iterate sequence generated by 
{IPALM} for the increasing penalty sequence $\{\rho_k\}_{k\in\integers}$ and inexact optimality parameter sequence $\{\alpha_k\}_{k\in\integers}$ defined as follows: given some $c,\alpha_0,\rho_0>0$, $\beta>1$ and $\tau\in(0,1)$ such that $\delta\triangleq\beta\tau<1$, let $\alpha_k \triangleq \alpha_0(k+1)^{-2(1+c)}\beta^{-k}, \rho_k \triangleq \rho_0 \beta^k, k\geq 0.$}
Then, 
{$\lambda^*\triangleq\lim_{k\rightarrow\infty}\bar{\lambda}_k$ exists; moreover, $\lambda^*\in\argmax_\lambda g_0(\lambda;\theta^*)$ and}
{\small
\begin{subequations} \label{rate-bound}
\begin{align}
& \mid f(x_{k+1};\theta^*)-f^* \mid \leq \ {B_k\over \beta^k}, \label{rate-bound-a} \\
&  {d_{-\Kscr} }\big( h(x_{k+1};\theta^*) \big) \leq  {1\over
	\beta^k}  \left( {2C'_\lambda \over \rho_0 }+ L_{h,\theta} \|\theta_0-\theta^*\|\delta^k \right), \label{rate-bound-b}
\end{align}
\end{subequations}}%
for all $k\geq 0$, where $C'_\lambda$ is 
defined in Lemma~\ref{Theorem: Aug_bnd_lambda_increa_rho} and
{\small
\begin{align}
\label{eq:Bk}
B_k \triangleq  &\frac{\alpha_0}{(k+1)^{2(1+c)}}+{1\over\rho_0}(2C'_\lambda+\|\lambda^*\|)^2\nonumber\\
&+\rho_0\left(L_{h,\theta}\norm{\theta_0-\theta^*}\delta^k+\frac{{L_{f,\theta}}}{\rho_0L_{h,\theta}}\right)^2.
\end{align}
}%
\end{theorem}
\begin{IEEEproof}
{Let $\lambda^*\in\Lambda^*$ {denote} an arbitrary dual solution.}
From Assumption~\ref{Assump: learn_lin_rate}, for some
	$\tau\in(0,1)$, we have
		$\norm{\theta_k-\theta^*}\leq\norm{\theta_0-\theta^*}\tau^k$ for
		all $k\geq 0$. By hypothesis, $\rho_k = \rho_0 \beta^k$ for
		$k\geq 0$, $\delta\triangleq\beta\tau < 1$, and
		$\sum_{k=0}^\infty \sqrt{\alpha_k\rho_k}=\sum_{k=0}^\infty
		{\alpha_0\over k^{1+c}}< \infty$; therefore, the conditions of
	Lemma~\ref{Theorem: Aug_bnd_lambda_increa_rho} are satisfied and as
		a consequence, we have $\|\lambda_k-\lambda^*\|\leq
		C'_\lambda$ for all $k\geq 0$. Next, by combining the lower and
		upper bounds on primal suboptimality obtained in
Lemma~\ref{Theorem: Aug_low_bnd_increa_rho}, we obtain
{\
\begin{align}
& \Big|f(x_{k+1};\theta^*)-f^*\Big| \notag \\
\leq & \max\Big\{{\|\lambda_k\|^2\over \rho_k}+2{L_{f,\theta}}\|\theta_k-\theta^*\|+\alpha_k,\notag \\
& {1\over \rho_k}\Big(\|\lambda_{k+1}\|+\|\lambda_{k}-\lambda^*\|\Big)^2\Big\}+ \rho_k{L^2_{h,\theta}}\|\theta_k-\theta^*\|^2\notag\\
\leq &
\frac{1}{\beta^k}\max\left\{\tfrac{1}{\rho_0}(C'_\lambda+\norm{\lambda^*})^2+2{L_{f,\theta}}\norm{\theta_0-\theta^*}\delta^k
	\right. \notag \\
& \left. +\tfrac{\alpha_0}{(k+1)^{2(1+c)}}\tfrac{1}{\rho_0}(2C'_\lambda+\norm{\lambda^*})^2\right\}+\frac{1}{\beta^k}\rho_0L_{h,\theta}^2\norm{\theta_0-\theta^*}^2\delta^{2k}\notag\\
\leq &
\frac{1}{\beta^k}\Big[\tfrac{1}{\rho_0}(2C'_\lambda+\norm{\lambda^*})^2+\rho_0L_{h,\theta}^2\norm{\theta_0-\theta^*}^2\delta^{2k}
\notag \\
	& +2{L_{f,\theta}}\norm{\theta_0-\theta^*}\delta^k+\tfrac{\alpha_0}{(k+1)^{2(1+c)}}\Big] \notag
\end{align}}%
for all $k\geq 0$. Hence, completing the square in the last inequality, 
we obtain the desired result. To prove the rate statement for infeasibility, we use 
Lemma~\ref{Theorem:primal_infeas_increa_rho} and Lemma~\ref{Theorem: Aug_bnd_lambda_increa_rho}:
{\
\begin{align*} 
{d_{-\Kscr} }\big( h(x_k;\theta^*) \big)  & \leq
{\|\lambda_{k+1}-\lambda_k\| \over \rho_k} + L_{h,\theta}
\|\theta_k-\theta^*\|, \notag
\end{align*}}%
which implies the desired inequality in \eqref{rate-bound-b}.  {The proof of dual convergence is provided in the appendix. 
}
\end{IEEEproof}
\vspace*{-3mm}
\section{Overall iteration complexity analysis}\label{sec:complexity}
Implementing {the inexact augmented Lagragian algorithm} involves performing
inner and outer loops, where each outer loop corresponds to {one update of
the Lagrange multiplier} according to Step~2 in 
{IPALM}, while the inner loops correspond
to iterations of the scheme employed to compute {$x_{k+1}$ as in {Step~1} of {IPALM}}. Hence, to {assess} the overall
 {computational} complexity of {our inexact} augmented Lagrangian
 approach, it is essential to
specify the algorithm used for inner optimization. We assume that \vspace*{-2mm}
	$$f(x;\theta)\triangleq q(x;\theta)+p(x;\theta),$$
where {the functions $p$ and $q$ represent the \emph{smooth} and \emph{nonsmooth} parts of $f$}, respectively. Then, {according to}~\eqref{Aug_L_forrho},
\begin{align}
{\cal L}_\rho(x,\lambda;\theta)=q(x;\theta)+\nu_\rho(x,\lambda;\theta),\label{Aug_L_complex_rep}
\end{align}
where
$
{\nu_\rho(x,\lambda;\theta)}\triangleq p(x;\theta)+{\rho\over 2} {d^2_{-\Kscr}}\Big(\frac{\lambda}{ \rho}+h(x;\theta)\Big)-\frac{\|\lambda\|^2}{2\rho}.
$
{In the representation~\eqref{Aug_L_complex_rep}, the function $q$
	captures the nonsmooth part of augmented Lagrangian function while
		the function {$\nu_\rho$} represents the smooth part. This is a
		particular case of the \emph{composite} convex minimization
		problem studied in~\cite{beck2009fast,Nesterov13,Tseng08_1J}.
		In~\cite{beck2009fast,Nesterov13}, the authors developed
		Accelerated Proximal Gradient~(APG) methods, inspired by
		Nesterov's optimal scheme~\cite{nesterov2013introductory}}, that can compute an $\epsilon$-optimal solution to composite convex 
problems within $\cO(1/\sqrt{\epsilon})$ iterations. 

{In what follows, we assume that the inner loop is solved by a
	particular implementation of the APG algorithm called Fast
		Iterative Shrinkage-Thresholding
		Algorithm~(FISTA)~\cite{beck2009fast}, displayed below in Algorithm~\ref{Alg: APG}. Implementing this scheme}
		requires that $\nabla_x \nu_\rho(x;\theta)$ be Lipschitz
			continuous in $x$ uniformly for all $\theta\in\Theta$. In this respect, {the next lemma states that $\nu_\rho(\cdot;\theta)$ is indeed smooth for any $\theta\in\Theta$ under Assumption~\ref{Assump: Lipsch_grad_nu}.}
\begin{assumption}
\label{Assump: Lipsch_grad_nu}
{Let $q,p:X\times\Theta\rightarrow\reals{\cup\{+\infty\}}$ be proper, closed, convex functions of $x$ for all $\theta\in\Theta$ such that $p(x;\theta)$ is differentiable in $x$ on an open set containing $X$ for any fixed $\theta\in\Theta$. Moreover,
 $\nabla_x p(\cdot;\theta)$ is Lipschitz continuous 
 with a uniform constant $L_{p,x}$ for all $\theta \in \Theta$ .}
\end{assumption}
Recall that under Assumption~1.i, function $h(\cdot;\theta)$ is Lipschitz continuous, 
{uniformly for all $\theta \in \Theta$}, with constant $L_{h,x}\triangleq\max_{\theta\in\Theta}\|A(\theta)\|$.
{
\begin{lemma}[{\bf Lipschitz continuity of $\nabla_x \nu_\rho(x;\theta)$}]
\label{Lemma: Lips_cons_grad_nu}
{Under Assumption~\ref{Assump: Lipsch_grad_nu}, for any given $\theta \in \Theta$,} the gradient function $\nabla_x \nu_\rho(\cdot;\theta)$ is Lipschitz continuous 
with constant 
$$L_{\nu,x}(\rho,\theta) \triangleq L_{p,x}+\rho \|A(\theta)\|^2\leq L_{p,x}+\rho L_{h,x}^2.$$
\end{lemma}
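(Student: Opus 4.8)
The plan is to compute the gradient $\nabla_x \nu_\rho(x,\lambda;\theta)$ in closed form via the chain rule, and then estimate the difference $\|\nabla_x \nu_\rho(x,\lambda;\theta)-\nabla_x \nu_\rho(x',\lambda;\theta)\|$ term by term for arbitrary $x,x'\in X$, with $\lambda$ and $\theta$ held fixed (note the asserted constant $L_{\nu,x}(\rho,\theta)$ is independent of $\lambda$, so the estimate must be uniform in $\lambda$). First I would invoke Assumption~\ref{Assump: Aug_1}(ii), which provides the affine representation $h(x;\theta)=A(\theta)x+b(\theta)$, together with the identity $\nabla d^2_{-\Kscr}(y)=2\big(y-\Pi_{-\Kscr}(y)\big)$ recorded in the Notation paragraph (valid for the closed convex set $-\Kscr$). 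Since the Jacobian of $x\mapsto \tfrac{\lambda}{\rho}+h(x;\theta)$ is $A(\theta)$, the chain rule yields
\begin{align*}
\nabla_x \nu_\rho(x,\lambda;\theta)=\nabla_x p(x;\theta)+\rho\,A(\theta)^\top\!\left(\tfrac{\lambda}{\rho}+h(x;\theta)-\Pi_{-\Kscr}\!\big(\tfrac{\lambda}{\rho}+h(x;\theta)\big)\right),
\end{align*}
where the prefactor $\tfrac{\rho}{2}$ in the definition of $\nu_\rho$ has cancelled with the factor $2$ from $\nabla d^2_{-\Kscr}$.

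Next, introduce the residual map $\Pi^c_{-\Kscr}(y)\triangleq y-\Pi_{-\Kscr}(y)$, which is nonexpansive (cf.\ the proof of Lemma~\ref{Lemma:d_triangle}), and observe that the second summand above is $\rho\,A(\theta)^\top\,\Pi^c_{-\Kscr}\!\big(\tfrac{\lambda}{\rho}+h(x;\theta)\big)$. Then, by the triangle inequality and $\|A(\theta)^\top\|=\|A(\theta)\|$,
\begin{align*}
\big\|\nabla_x \nu_\rho(x,\lambda;\theta)-\nabla_x \nu_\rho(x',\lambda;\theta)\big\|
&\leq \big\|\nabla_x p(x;\theta)-\nabla_x p(x';\theta)\big\|\\
&\quad +\rho\,\|A(\theta)\|\,\big\|\Pi^c_{-\Kscr}\!\big(\tfrac{\lambda}{\rho}+h(x;\theta)\big)-\Pi^c_{-\Kscr}\!\big(\tfrac{\lambda}{\rho}+h(x';\theta)\big)\big\|.
\end{align*}
I would bound the first term by $L_{p,x}\|x-x'\|$ using Assumption~\ref{Assump: Lipsch_grad_nu}. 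For the second, nonexpansivity of $\Pi^c_{-\Kscr}$ bounds it by $\rho\,\|A(\theta)\|\,\|h(x;\theta)-h(x';\theta)\| = \rho\,\|A(\theta)\|\,\|A(\theta)(x-x')\| \le \rho\,\|A(\theta)\|^2\,\|x-x'\|$, using the affine form of $h$ (Remark~\ref{rem:Lipschitz-h}). Summing the two estimates gives $L_{\nu,x}(\rho,\theta)=L_{p,x}+\rho\|A(\theta)\|^2$, and the stated bound $L_{p,x}+\rho L_{h,x}^2$ follows from $\|A(\theta)\|=\sigma_{\max}(A(\theta))\le \max_{\theta\in\Theta}\|A(\theta)\|=L_{h,x}$.

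The computation is essentially routine; the only points requiring care are the bookkeeping of the constant — checking that $\tfrac{\rho}{2}$ combines with the factor $2$ in $\nabla d^2_{-\Kscr}$ to leave exactly $\rho$ — and the observation that $\Pi^c_{-\Kscr}=I-\Pi_{-\Kscr}$ is nonexpansive (indeed firmly nonexpansive, as $-\Kscr$ is closed and convex), which is precisely what allows the Lipschitz modulus of $h(\cdot;\theta)$ in $x$ to be transferred through the projection term. I do not anticipate any genuine obstacle.
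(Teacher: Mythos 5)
Your proposal is correct and follows essentially the same route as the paper: compute the gradient via the chain rule using $\nabla d^2_{-\Kscr}(y)=2(y-\Pi_{-\Kscr}(y))$, then bound the two summands separately using the Lipschitz continuity of $\nabla_x p$ and the nonexpansivity of the projection-residual term together with $\|A(\theta)^\top\|=\|A(\theta)\|$. The only cosmetic difference is that the paper rewrites $y-\Pi_{-\Kscr}(y)$ as $\Pi_{\Kscr^*}(y)$ via the Moreau decomposition and invokes nonexpansivity of $\Pi_{\Kscr^*}$, whereas you invoke nonexpansivity of $I-\Pi_{-\Kscr}$ directly --- these are the same map for a closed convex cone, so the arguments coincide.
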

\begin{IEEEproof}
{Recall $h(x;\theta)=A(\theta)x+b(\theta)$ is an affine function in $x$ for all $\theta\in\Theta$. Since for any closed convex cone $\cK\in\reals^m$, $\Pi_{\Kscr^*}(x) = x-\Pi_{-\Kscr}(x)$ for all $x\in\reals^m$ and $\grad d_{-\cK}^2(y)=2(y-\Pi_{-\cK}(y))$ for all $y\in\reals^m$, the following holds:}
\begin{align*}
\nabla_x \nu(x,\lambda;\theta) 
=\nabla_x p(x;\theta)+ {\rho} {A(\theta)}^\top\Pi_{\Kscr^*}\big(h(x;\theta)+\tfrac{\lambda}{\rho}\big).
\end{align*}
Then, {by adding and subtracting terms and by invoking the triangle inequality and Lipschitz continuity, it} follows that for all $x,x' \in X$ and $\theta \in \Theta$,
{\
\begin{align}
& \|\nabla_x \nu(x,\lambda;\theta)-\nabla_x \nu(x',\lambda;\theta)\| \nonumber\\
=&\Big\|\nabla_x p(x;\theta)+ {\rho} {A(\theta)}^\top\Pi_{\Kscr^*}\big(h(x;\theta)+\tfrac{\lambda}{\rho}\big)\nonumber \\
& -\nabla_x p(x';\theta)- {\rho} {A(\theta)}^\top\Pi_{\Kscr^*}\big(h(x';\theta)+\tfrac{\lambda}{\rho}\big)\Big\|\notag\\
\leq & \left\|\nabla_x p(x;\theta)-\nabla_x p(x';\theta)\right\|\nonumber\\
& +\rho\left\|A(\theta)\right\| \Big\|\Pi_{\Kscr^*}\big(h(x;\theta)+\tfrac{\lambda}{\rho}\big) -\Pi_{\Kscr^*}\big(h(x';\theta)+\tfrac{\lambda}{\rho}\big)\Big\|\notag\\
\leq & L_{p,x} \|x-x'\|+\rho \left\|A(\theta)\right\|^2\left\|x-x'\right\|,\label{proof: Iter_complex_1}
\end{align}}%
{where the last inequality follows from the nonexpansivity of the projection.} 
\
\end{IEEEproof}
Given $x_{k}$ computed in the previous outer iteration of 
{IPALM}, we {employ} \emph{inner} iterations, displayed in Algorithm~\ref{Alg: APG}, to compute $x_{k+1}$, an $\alpha_k$-optimal solution to 
\begin{equation}
\label{eq:inner-problem}
g_{\rho_k}(\lambda_k;\theta_k)=\min_{x\in X}\ q(x;\theta_k)+\nu_{\rho_k}(x,\lambda_k;\theta_k),
\end{equation}
{i.e., $x_{k+1}\in X$ and $\cL_{\rho_k}(x_{k+1},\lambda_k;\theta_k)\leq g_{\rho_k}(\lambda_k;\theta_k)+ \alpha_k$.} 
\begin{algorithm}
\
\caption{APG$(x_k,\theta_k,\alpha_k,\rho_k)$ -- {\small implemented on the subproblem in~\eqref{eq:inner-problem}}}
{\textbf{Initialization:}\ Set $z_0\gets x_{k}$, $y_1\gets z_0$, $m_1\gets1$ and $t\gets 1$.}\\[1.5mm]
For all $t\geq 1$, update:
\label{Alg: APG}
{\small
\begin{enumerate}
\item[1.] {$z_{t}  \gets\argmin_{z\in X} \,\Big\{q(z;\theta_k)+\nabla_x \nu_{\rho_k}(y_t,\lambda_k;\theta_k)^\top (z-y_t)$
\text{ } \hspace*{2.5cm} $+ {L_{\nu,x}(\rho_k,\theta_k)\over 2} \|z-y_t\|^2\Big\}$} 
\item[2.] $m_{t+1} \gets\Big(1+\sqrt{1+4m_t^2}\Big)/2$ 
\item[3.] $y_{t+1} \gets z_t+\Big({m_t-1 \over m_{t+1}}\Big)\Big(z_t-z_{t-1}\Big)$
\item[4.] If $t {\geq} T_k \ \triangleq \
	\sqrt{\tfrac{{8}L_{\nu,x}(\rho_k,\theta_k)}{\alpha_k}} D_x$, then STOP;\\ else $t \gets t+1$ and go to
	Step 1.
\end{enumerate}}%
\end{algorithm}

To obtain an accuracy of $\alpha_k$, at {most} $T_k\triangleq \sqrt{\tfrac{{8}L_{\nu,x}(\rho_k,\theta_k)}{\alpha_k}} D_x$ 
{APG (inner) iterations 
are required 
within the $k$-th (outer) iteration of IPALM;} 
this follows from the iteration complexity result of APG -- see~\cite[Theorem~4.4]{beck2009fast}. 
\begin{lemma}
\label{Lemma: comp_compl_APG}
{Let 
$q$ and $p$ satisfy Assumption~\ref{Assump: Lipsch_grad_nu}.}
Fix $\alpha_k >0$ and let $\{z_t,y_t\}$ denote the 
{APG} iterate sequence. 
If {$x_{k+1}^*\in \argmin_{x\in X}\, q(x;\theta_k)+\nu_{\rho_k}(x,\lambda_k;\theta_k)$},
		 then 
${{\cal L}_\rho(z_t,\lambda_k;\theta_k)} \leq g_{\rho_k}(\lambda_k;\theta_k)+\alpha_k$ whenever
{\
\begin{equation}
{t\geq \sqrt{\tfrac{2}{\alpha_k}L_{\nu,x}(\rho_k,\theta_k)}\|x_{k}-x_{k+1}^*\| -1}. \label{eq:stop_Lipschitz}
\end{equation}}%
\end{lemma}
\subsection{Overall iteration complexity for constant penalty $\rho$}
\label{sec:complexity-constant}
Next, we derive the overall iteration complexity of
{IPALM} in which APG is used for solving {the subproblem in
		\eqref{eq:inner-problem} to compute $x_k$ satisfying Step~1 in 
{IPALM}.
}
{\begin{theorem}
\label{Theo: overall complex const rho}
{Let Assumptions~\ref{Assump: Aug_1},~\ref{Assump:
	learn_lin_rate} and~\ref{Assump: Lipsch_grad_nu}
hold}, and {let $\{x_k,\lambda_k\}$ denote the primal-dual iterate sequence} generated by 
{IPALM} {when $\{\alpha_k\}$ is chosen as $\alpha_k = {\alpha_0 \over (k+1)^{2(1+c)}}$ for $k\geq 0$ for some $\alpha_0>0$ and $c>0$}. Then,
for all $\epsilon\in(0,1)$, there exists a {$k(\epsilon)\in\integers_+$} such that {$|f(\bar{x}_k;\theta^*)-f^*|\leq
\epsilon$ and $d_{-\cK}\big(h(\bar{x}_k;\theta^*)\big)\leq\epsilon$} for
	all {$k\geq k(\epsilon)=\cO(\epsilon^{-2})$} {and requires at most $O\big(\epsilon^{-4}\big)$ proximal-gradient computations as shown in Step~1 of 
{APG}}, where $\bar{x}_k\triangleq\tfrac{1}{k}{\sum_{i=1}^k x_i}$ for $k\geq 1$.
\end{theorem}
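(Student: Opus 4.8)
The plan is to feed the three constant-penalty rate statements into one another to fix the required number of outer iterations, and then to pay for the inner APG iterations epoch by epoch.

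First I would check that the prescribed choice $\alpha_k=\alpha_0(k+1)^{-2(1+c)}$ meets Assumption~\ref{Assump:Aug_2}: $\sum_{k\ge 0}\sqrt{\alpha_k}=\sqrt{\alpha_0}\sum_{k\ge 0}(k+1)^{-(1+c)}<\infty$ because $c>0$. Hence Proposition~\ref{Theorem: Aug_lambda_bnd_cont_rho} (boundedness of $\{\lambda_k\}$ by $C_\lambda$), Theorem~\ref{Theorem:Aug_bnd_dual_sub} (dual gap $\le B_g/k$), Theorem~\ref{Theorem:Aug_prim_infeas_cons_rho} (infeasibility $\le{\cal V}(k)=C_1/\sqrt k+C_2/k$), and Theorem~\ref{Theorem: Aug_prim_low_bnd_cons_rho} (the two-sided primal bound) all apply with the finite constants defined there. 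Writing $C\triangleq C_1+C_2$ and using $1/k\le 1/\sqrt k$ for $k\ge 1$, one gets ${\cal V}(k)\le C/\sqrt k$, so $d_{-\Kscr}(h(\bar{x}_k;\theta^*))\le\epsilon$ as soon as $k\ge C^2\epsilon^{-2}$; the upper bound $f(\bar{x}_k;\theta^*)-f^*\le U/k\le\epsilon$ as soon as $k\ge U\epsilon^{-1}$; and the lower bound $f(\bar{x}_k;\theta^*)-f^*\ge-\tfrac{\rho}{2}{\cal V}^2(k)-\|\lambda^*\|{\cal V}(k)\ge-\big(\tfrac{\rho}{2}C^2+\|\lambda^*\|C\big)/\sqrt k\ge-\epsilon$ as soon as $k\ge\big(\tfrac{\rho}{2}C^2+\|\lambda^*\|C\big)^2\epsilon^{-2}$. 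Taking $k(\epsilon)$ to be the maximum of these three thresholds yields $k(\epsilon)=\mathcal{O}(\epsilon^{-2})$, and since ${\cal V}(k)$ and $U/k$ are nonincreasing in $k$ the two displayed estimates persist for every $k\ge k(\epsilon)$.

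Next I would count the proximal-gradient steps. Each outer iteration $k$ invokes APG$(x_k,\theta_k,\alpha_k,\rho)$ which, by Step~4 of Algorithm~\ref{Alg: APG} and Lemma~\ref{Lemma: comp_compl_APG}, halts after at most $T_k=D_x\sqrt{2L_{\nu,x}(\rho,\theta_k)/\alpha_k}$ prox-gradient steps. Because $\rho_k\equiv\rho$ and $\Theta$ is compact, Lemma~\ref{Lemma: Lips_cons_grad_nu} supplies the epoch-independent bound $L_{\nu,x}(\rho,\theta_k)\le L_{p,x}+\rho L_{h,x}^2=:\bar{L}_\nu$ with $L_{h,x}=\max_{\theta\in\Theta}\|A(\theta)\|$; substituting $\alpha_k=\alpha_0(k+1)^{-2(1+c)}$ then gives $T_k\le D_x\sqrt{2\bar{L}_\nu/\alpha_0}\,(k+1)^{1+c}$. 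Summing over the first $k(\epsilon)$ epochs,
\begin{equation*}
\sum_{k=0}^{k(\epsilon)-1}T_k\ \le\ D_x\sqrt{2\bar{L}_\nu/\alpha_0}\sum_{j=1}^{k(\epsilon)}j^{1+c}\ =\ \mathcal{O}\!\big(k(\epsilon)^{2+c}\big)\ =\ \mathcal{O}\!\big(\epsilon^{-2(2+c)}\big),
\end{equation*}
which I would record as $\mathcal{O}(\epsilon^{-4})$ after absorbing the $\epsilon^{-2c}$ factor (equivalently, for $c$ taken arbitrarily close to $0$, which remains compatible with Assumption~\ref{Assump:Aug_2}).

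The substance here is essentially bookkeeping, and the step I expect to be most delicate is isolating the \emph{binding} accuracy requirement: both the primal lower-suboptimality bound and the infeasibility bound decay only at rate $\mathcal{O}(1/\sqrt k)$ — a floor intrinsic to the constant-$\rho$ augmented Lagrangian — so it is these, rather than the $\mathcal{O}(1/k)$ upper bound, that fix $k(\epsilon)=\mathcal{O}(\epsilon^{-2})$. It is worth stressing in the final write-up that the learning error (via Assumption~\ref{Assump: learn_lin_rate}, $\|\theta_k-\theta^*\|\le\tau^k\|\theta_0-\theta^*\|$) enters only through lower-order, geometrically small terms inside ${\cal V}(k)$ and $U$, so the $\mathcal{O}(\epsilon^{-4})$ count is driven by the vanishing inexactness $\alpha_k$ — mandated by Assumption~\ref{Assump:Aug_2} to keep $\{\lambda_k\}$ bounded and to recover asymptotic optimality — rather than by misspecification itself; with a fixed $\alpha$ the inner count would instead be $\mathcal{O}(\epsilon^{-2})$, at the price of losing exact optimality.
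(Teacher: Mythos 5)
Your argument is correct and delivers the stated bounds, but it reaches them by a more elementary route than the paper. You fix $\rho$ as an $\epsilon$-independent constant at the outset, read off $k(\epsilon)={\cal O}(\epsilon^{-2})$ from the binding ${\cal O}(1/\sqrt{k})$ bounds (primal infeasibility and the lower suboptimality bound), and then sum the per-epoch APG counts $T_k={\cal O}\big((k+1)^{1+c}\big)$ to obtain ${\cal O}\big(\epsilon^{-2(2+c)}\big)$; your explicit acknowledgment of the residual $\epsilon^{-2c}$ factor matches the paper's own $\tilde{\cal O}(\epsilon^{-4})$ statement inside its proof. The paper instead parametrizes $\rho=\rho_o\epsilon^{-r}$ and $k(\epsilon,s)=\lceil k_o\epsilon^{-s}\rceil$, derives the sufficient condition $s\geq\max\{r+1,2\}$ from the structure of $\bar{\cal V}(\epsilon,r,s,\rho_o,k_o)$, and minimizes the resulting inner-iteration count over $(r,s)$, concluding that $r=0,\ s=2$ is optimal. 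What that extra machinery buys is (i) a proof that a constant $\rho$ is in fact the \emph{best achievable} choice in the misspecified setting, because the learning-induced terms $\mu/k$ and $\Gamma_\theta/\sqrt{k}$ in ${\cal V}(k)$ do not shrink as $\rho$ grows, and (ii) the setup for Corollary~\ref{Corol:Constant rho no learn complex}, where those terms vanish, $r=1,\ s=0$ becomes admissible, and the total count drops to ${\cal O}(\epsilon^{-1})$. This also corrects the one substantive slip in your closing commentary: it is precisely the misspecification, not merely the vanishing $\alpha_k$, that forces the degradation from ${\cal O}(\epsilon^{-1})$ to ${\cal O}(\epsilon^{-4})$, since without learning one may scale $\rho\sim\epsilon^{-1}$ and terminate after ${\cal O}(1)$ outer iterations.
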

}
\begin{IEEEproof}
{To simplify the notation, 
let
	$\gamma\triangleq\sum_{i=0}^\infty\sqrt{\alpha_k}<+\infty$ and $\eta
		\triangleq {\bar{\kappa}}~{\|\theta_0-\theta^*\|\over 1-\tau}$}.
{According to 
Theorems~\ref{Theorem: Aug_lambda_bnd_cont_rho} and~\ref{Theorem:Aug_bnd_dual_sub},
	$\norm{\lambda_k-\lambda^*}\leq C_\lambda$ for all $k\geq 1$, where}
$C_\lambda \ \triangleq \ \sqrt{2\rho}~\gamma+\rho~\eta+ \| \lambda_0-\lambda^*\|$ and {$\lambda^*\in\Lambda^*$ such that $\lambda^*=\lim_k \bar{\lambda}_k$.}
{
From Theorem~\ref{Theorem:Aug_bnd_dual_sub}, $0\leq f^*-g_\rho(\bar{\lambda}_k;\theta^*)\leq B_g/k$ for all $k\geq 1$, where}
{\
\begin{align}
B_{g} \ \triangleq \ {\tfrac{1}{2} \rho}~\|\lambda_0-\lambda^*\|^2+C_{\lambda}\big(\sqrt{\tfrac{2}{\rho}}~\gamma+\eta\big). \label{Rate_ineq_1}
\end{align}}%
By choosing $\alpha_0>0$ {appropriately}, we can generate an $\{\alpha_k\}$ sequence such that $\gamma={1\over \sqrt{2\rho}}$. Hence,
\begin{subequations}\label{B_g}
\begin{align}
C_\lambda & =1+\rho~\eta+\|\lambda_0-\lambda^*\|\\
B_{g} & = \tfrac{1}{2\rho}\|\lambda_0-\lambda^*\|^2+C_{\lambda}\big(\tfrac{1}{\rho}+\eta\big).
\end{align}
\end{subequations}
{Moreover, according to Theorem~\ref{Theorem:Aug_prim_infeas_cons_rho}, $d_{-\cK}\big(h(\bar{x}_k;\theta^*)\big)\leq\cV(k)$ for all $k\geq 1$, where}
{\small
\begin{align*}
{\cal V}(k) = {C_1\over \sqrt{k}} +{C_2 \over k},\quad C_1 \
	\triangleq \  \sqrt{{2B_g\over \rho}+\left(C_\lambda\over
			\rho\right)^2 },\quad C_2  \ \triangleq \  {1\over \rho} +\mu,
\end{align*}}%
and $\mu\triangleq{ {(L_{h,\theta}+\bar{\kappa})} \|\theta_0-\theta^*\| \over 1-\tau}$.
Using~\eqref{B_g}, we expand $C_1^2$ as 
{\
\begin{align*}
C_1^2&={1\over \rho^2}\|\lambda_0-\lambda^*\|^2+{2\over
	\rho}\left({1\over \rho}+\eta\right)C_\lambda
	+\left(\frac{C_\lambda}{\rho}\right)^2 \\
& ={1\over \rho^2}\left[ 2{\left(1+\rho~\eta+\norm{\lambda_0-\lambda^*}\right)^2}+(1+\rho~\eta)^2\right].
\end{align*}}%
Since $\sqrt{a + b } \leq \sqrt{a} + \sqrt{b}$ for $a, b \geq 0$, it follows that
$C_1\leq  (\sqrt{2}+1) \left(\frac{1}{\rho}+\eta\right)+\frac{\sqrt{2}}{\rho}\norm{\lambda_0-\lambda^*}$.
Hence, we can {bound} ${\cal V}(k)$ from above by 
{\
\begin{align}
{\cal V}(k) \leq
& {1\over \rho} \Big(  \frac{1}{k} +
		\frac{\sqrt{2}(\|\lambda_0-\lambda^*\|+1)+1}{\sqrt{k}}
		\Big)\notag \\
& +\frac{\mu}{k}+\frac{(\sqrt{2}+1)~\eta}{\sqrt{k}}.\label{v-ineq1}
\end{align}}%
{To further simplify the notation, let $\Gamma_\theta\triangleq(\sqrt{2}+1)\eta$ and $\Gamma_\lambda\triangleq \sqrt{2}(\|\lambda_0-\lambda^*\|+1)+1$. Next, given $\epsilon\in(0,1)$, let
$$\rho =\rho_o\epsilon^{-r},\qquad k(\epsilon,s)\triangleq \lceil k_o\epsilon^{-s}\rceil$$
for some fixed $r,s\geq 0$ and $\rho_o, k_o>0$, where $\rho_o$ and
	$k_o$ depend only on problem parameters and are independent of 
$\epsilon$. From~\eqref{v-ineq1}, it immediately follows that for all $k\geq k(\epsilon,s)$}
{\
\begin{align}
{\cal V}(k)\leq & {1\over \rho_o\epsilon^{-r}}\left({1\over
		k_o\epsilon^{-s}}+{\Gamma_\lambda\over \sqrt{k_o\epsilon^{-s}}
		\sqrt{k_o\epsilon^{-s}} }\right)\notag \\
	& +{\mu \over k_o\epsilon^{-s}}+{\Gamma_\theta\over {\sqrt{k_o}\epsilon^{-s\over 2}}}\notag\\
= & {1\over \rho_ok_o}~\epsilon^{r+s}+{\Gamma_\lambda\over
	\rho_o\sqrt{k_o}}~\epsilon^{r+{s\over 2}}+{\mu \over
		k_o}~\epsilon^{s}+{\Gamma_\theta\over
			\sqrt{k_o}}~\epsilon^{s\over 2}\notag \\
	\triangleq & \bar{\cV}(\epsilon,r,s,\rho_o,k_o).\label{Rate_ineq3}
\end{align}}%
{Both $\epsilon^{r} < 1$ and $\epsilon^{s/2} < 1$ for any $r,s\geq 0$ since $\epsilon\in(0,1)$.}

Since $\cV(k)$ decreases with $\cO(1/\sqrt{k})$ rate, the suboptimality
bounds obtained in Theorem~\ref{Theorem: Aug_prim_low_bnd_cons_rho}
satisfy ${\rho\over 2} {\cal V}^2(k)+\|\lambda^*\|{\cal V}(k)\geq U/k$
for all $k\geq \lceil k_o\epsilon^{-s}\rceil$ when $\epsilon \ \in
(0,1)$ is {sufficiently small. Although the proof can be written for all
$\epsilon\in(0,1)$,  we assume that  $\epsilon>0$ is sufficiently small to
simplify the notation;} 
therefore, {$|f(\bar{x}_k;\theta^*)-f^*|\leq
\epsilon$ and $d_{-\cK}\big(h(\bar{x}_k;\theta^*)\big)\leq\epsilon$ for all $k\geq k(\epsilon,s)$ whenever $k$ also satisfies}
\begin{equation}
\label{eq:condition-Vk}
{\rho\over 2} {\cal V}^2(k)+\|\lambda^*\|{\cal V}(k)\leq \epsilon,\quad {\cal V}(k)\leq \epsilon.
\end{equation}
{Clearly, \eqref{eq:condition-Vk} holds whenever} $(\|\lambda^*\|+1){\cal V}(k) \leq
{\epsilon \over 2}$ and ${\rho \over 2} {\cal
V}^2(k)\leq {\epsilon \over 2}$; hence, using \eqref{Rate_ineq3}, we can conclude
that 
{given sufficiently small $\epsilon>0$},
 $\bar{x}_k$ is $\epsilon$-optimal and $\epsilon$-feasible for all $k\geq k(\epsilon,s)$ if $r,s\geq 0$ and $\rho_o,k_o>0$ satisfy
{
\begin{equation}
\label{eq:sufficient-cond}
\bar{\cV}(\epsilon,r,s,\rho_o,k_o)\leq\min\left\{\frac{\epsilon^{\frac{r+1}{2}}}{\sqrt{\rho_o}},~\frac{\epsilon}{2\|\lambda^*\|+2}\right\}.
\end{equation}}%
{{Next, among all $r,s\geq0$ and $\rho_o,k_o>0$ satisfying this
sufficient condition, we investigate the optimal choice that minimizes
	the overall computational complexity corresponding to
	$k(\epsilon,s)$ outer iterations to achieve $\epsilon$-accuracy.
	According to Step~1 in 
{IPALM}, we need to ensure
	$\alpha_k$ level accuracy {in function values} at iteration $k$
	of the outer loop}; {hence, according to Lemma~\ref{Lemma:
		comp_compl_APG}, starting from $x_k$,} we
			need to perform {at most}
	$\sqrt{{\tfrac{2}{\alpha_k}{L_{\nu,x}(\rho,\theta_k)}}}~\|{x_k-x_{k+1}^*}\|$
		{inner iterations, where $x_{k+1}^*$ is an arbitrary optimal
			solution to \eqref{eq:inner-problem}.} {
Since $\|x-x'\| \leq 2D_x$ for any $x,x' \in X$ (see
					Remark~\ref{rem:Lipschitz-h}), the number of inner
					iterations within the $k$-th outer loop can bounded
					as follows for $k\geq 0$,}
 {\
 \begin{align*}
& \quad \sqrt{\tfrac{2}{\alpha_k}L_{\nu,x}(\rho,\theta_k)}{\|x_{k}-x_{k+1}^*\|}
	 \leq\sqrt{{8(L_{p,x}+\rho~L_{h,x}^2)\over {\alpha_0 \over
													 {(k+1)}^{2(1+c)}
												 }}} {D_x} \\
 &
  =  \sqrt {\tfrac{8}{\alpha_0}(L_{p,x}+\rho~L_{h,x}^2)}{D_x}{(k+1)}^{1+c}.
 \end{align*}}%
Since $\alpha_0$ is chosen such that $\gamma=\sum_{k=0}^{\infty}\sqrt{\alpha_k}={1\over \sqrt{2\rho}}$, 
{\
\begin{align*}
\frac{1}{\sqrt{2\rho}}
& =\sum_{k=0}^{\infty}\sqrt{\alpha_k}=\sqrt{\alpha_0}\sum_{k=1}^{\infty}\left(\frac{1}{k}\right)^{1+c}
 \\
& \leq \sqrt{\alpha_0}\left[1+\int_{t=1}^{\infty}\left(\frac{1}{t}\right)^{1+c}~dt\right]=\sqrt{\alpha_0}\left(1+\frac{1}{c}\right).
\end{align*}}%
Hence, for $c\in(0,1)$, we get $\frac{1}{\alpha_0}\leq\frac{8\rho}{c^2}$. 
We also have 
$\sum\limits_{k=0}^{k(\epsilon,s)}~(k+1)^{1+c}\leq\int\limits_0^{k(\epsilon,s)}(t+1)^{1+c}~dt.$ Therefore, the total number of inner iterations to obtain $\epsilon$ accuracy is bounded by
{\small
\begin{align}
& \frac{8D_x}{c}\sqrt{\rho~(L_{p,x}+\rho
		L_{h,x}^2)}~\sum_{k=0}^{k(\epsilon,s)}~(k+1)^{1+c} \notag \\
\leq & \frac{8D_x}{c(2+c)}\sqrt{\rho_oL_{p,x}+(\rho_oL_{h,x})^2\epsilon^{-r}}
(\lceil k_o\epsilon^{-s}\rceil+1)^{2+c}~\epsilon^{-r/2},
\label{complex analysis}
\end{align}}%
where $\rho=\rho_o\epsilon^{-r}$ and $k(\epsilon,s) = \lceil k_o
\epsilon^{-s}\rceil$. 
{{According to \eqref{Rate_ineq3}, for \eqref{eq:sufficient-cond} to hold for all sufficiently small $\epsilon$, we require $s \geq \max\{r+1,~2\}$.}
		Hence, {the best achievable rate} is obtained when $r=0$ and $s=2$, which results in $\tilde{\cal O}(\epsilon^{-4})$ rate. Indeed, choosing $r=0$, $s=2$, $\rho_o\geq 4(\norm{\lambda^*}+1)^2$ and $k_o$ such that $\sqrt{\rho_o}~k_o-(\Gamma_\lambda+\Gamma_\theta\rho_o)~\sqrt{k_o}-(1+\mu\rho_o)\geq 0$ satisfies the sufficient condition in \eqref{eq:sufficient-cond}.}}
\
\end{IEEEproof}
We conclude with a corollary that specifies the iteration complexity
associated 
when $\theta_0 =
\theta^*${; hence, Assumption~\ref{Assump: learn_lin_rate} implies
	that $\theta_k=\theta^*$ for all $k\geq 0$.}
{
\begin{corollary}
\label{Corol:Constant rho no learn complex}
Suppose $\theta_0=\theta^*$ in 
{IPALM}. Under the same assumptions as stated in Theorem~\ref{Theo: overall complex const rho},
for all sufficiently small $\epsilon>0$, there exist $\rho=\cO(1/\epsilon)$ and {$k_o\in\integers_+$}, independent of $\epsilon>0$, such that {$|f(\bar{x}_k;\theta^*)-f^*|\leq
\epsilon$ and $d_{-\cK}\big(h(\bar{x}_k;\theta^*)\big)\leq\epsilon$} for all $k\geq k_o$ {which requires at most $O\big(\epsilon^{-1}\big)$ proximal-gradient computations as shown in Step~1 of 
{APG}}. 
\end{corollary}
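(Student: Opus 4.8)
The plan is to specialize every constant appearing in the constant‑penalty analysis (Proposition~\ref{Theorem: Aug_lambda_bnd_cont_rho} and Theorems~\ref{Theorem:Aug_bnd_dual_sub}, \ref{Theorem:Aug_prim_infeas_cons_rho}, \ref{Theorem: Aug_prim_low_bnd_cons_rho}) to the perfectly specified regime, track how they scale with $\rho$, and then substitute $\rho=\mathcal{O}(1/\epsilon)$. First I would note that $\theta_0=\theta^*$ together with Assumption~\ref{Assump: learn_lin_rate} forces $\theta_k=\theta^*$ for all $k\geq0$, so every learning‑error term vanishes; in the notation of the proof of Theorem~\ref{Theo: overall complex const rho} this means $\eta=\mu=\Gamma_\theta=0$. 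Retaining the normalization that $\alpha_0$ is chosen so that $\gamma\triangleq\sum_{k\geq0}\sqrt{\alpha_k}=1/\sqrt{2\rho}$ (whence $\alpha_0=\mathcal{O}(1/\rho)$ and $\sum_k\alpha_k\le\gamma^2=1/(2\rho)$), Proposition~\ref{Theorem: Aug_lambda_bnd_cont_rho} gives $C_\lambda=1+\|\lambda_0-\lambda^*\|=\mathcal{O}(1)$, and then $B_g=\tfrac1\rho\bigl(\tfrac12\|\lambda_0-\lambda^*\|^2+C_\lambda\bigr)$, $C_1=\sqrt{2B_g/\rho+(C_\lambda/\rho)^2}$, $C_2=1/\rho$ and $U$ (using the telescoped form $\tfrac1{2\rho}\|\lambda_0\|^2$ of its $\lambda_0$‑term, or simply $\lambda_0=0$, which is admissible since $0\in\cK^*$) are all $\mathcal{O}(1/\rho)$. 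In particular $\cV(k)=C_1/\sqrt k+C_2/k=\mathcal{O}\!\bigl(1/(\rho\sqrt k)\bigr)$.

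Next I would set $\rho=\rho_o/\epsilon$ with $\rho_o>0$ a constant to be fixed, so that $C_1,C_2,B_g,U$ are all bounded by $\hat C\,\epsilon/\rho_o$ for an absolute constant $\hat C$ depending only on $\|\lambda_0\|$ and $\|\lambda^*\|$. Then, for \emph{every} $k\geq1$: (i)~$\cV(k)\le \hat C\epsilon/(\rho_o\sqrt k)\le\epsilon$; (ii)~$\tfrac\rho2\cV^2(k)+\|\lambda^*\|\cV(k)\le \tfrac{\hat C^2}{2\rho_o}\epsilon+\tfrac{\hat C\|\lambda^*\|}{\rho_o}\epsilon\le\epsilon$; and (iii)~$U/k\le \hat C\epsilon/\rho_o\le\epsilon$, each valid once $\rho_o$ exceeds an explicit constant built from $\hat C$ and $\|\lambda^*\|$. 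Invoking the infeasibility bound of Theorem~\ref{Theorem:Aug_prim_infeas_cons_rho} and the two‑sided suboptimality bound of Theorem~\ref{Theorem: Aug_prim_low_bnd_cons_rho} then yields $d_{-\cK}(h(\bar{x}_k;\theta^*))\le\epsilon$ and $|f(\bar{x}_k;\theta^*)-f^*|\le\epsilon$ for all $k\geq k_o$, where $k_o$ is an absolute constant (indeed $k_o=1$ is admissible), independent of $\epsilon$, and $\rho=\rho_o/\epsilon=\mathcal{O}(1/\epsilon)$.

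Finally I would count proximal‑gradient steps exactly as in the proof of Theorem~\ref{Theo: overall complex const rho}. By Lemma~\ref{Lemma: comp_compl_APG}, together with $\|x_k-x_{k+1}^*\|\le2D_x$ (Remark~\ref{rem:Lipschitz-h}) and the Lipschitz estimate $L_{\nu,x}(\rho,\theta_k)\le L_{p,x}+\rho L_{h,x}^2$ of Lemma~\ref{Lemma: Lips_cons_grad_nu}, outer iteration $k$ of Algorithm~\ref{Alg: ALM} costs at most $\mathcal{O}\!\bigl(\sqrt{L_{\nu,x}(\rho,\theta_k)/\alpha_k}\,D_x\bigr)$ inner APG steps; since $\rho=\mathcal{O}(1/\epsilon)$ and $1/\alpha_k=(k+1)^{2(1+c)}/\alpha_0=\mathcal{O}\!\bigl((k+1)^{2(1+c)}/\epsilon\bigr)$, this is $\mathcal{O}\!\bigl((k+1)^{1+c}/\epsilon\bigr)$. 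Summing over the constant number $k_o$ of outer iterations gives a total of $\mathcal{O}(1/\epsilon)$ proximal‑gradient computations, as claimed.

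I expect the only genuine subtlety to be the first step: the point is that in the perfectly specified case the $\mathcal{O}(1)$ contributions to $B_g$, $C_1$, $C_2$ and $U$ all collapse to $\mathcal{O}(1/\rho)$, and with $\rho=\mathcal{O}(1/\epsilon)$ this already pushes all the rate bounds below $\epsilon$ after a \emph{constant} number of outer iterations — in contrast to the $\mathcal{O}(\epsilon^{-2})$ outer iterations required in Theorem~\ref{Theo: overall complex const rho} when learning error is present. One must also check that the several ``choose $\rho_o$ large enough'' requirements are mutually consistent (they are, since each merely lower‑bounds $\rho_o$ by a fixed constant), and tidy up the $\tfrac12\|\lambda_0\|^2$ versus $\tfrac1{2\rho}\|\lambda_0\|^2$ discrepancy in the definition of $U$ so that $U=\mathcal{O}(1/\rho)$ rather than $\mathcal{O}(1)$; the remainder is routine bookkeeping.
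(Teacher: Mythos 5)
Your proposal is correct and follows essentially the same route as the paper: with $\theta_0=\theta^*$ all learning terms ($\eta$, $\mu$, $\Gamma_\theta$) vanish, the constants $B_g$, $C_1$, $C_2$, $U$ collapse to $\mathcal{O}(1/\rho)$, and setting $\rho=\rho_o/\epsilon$ with $\rho_o$ a sufficiently large absolute constant makes a constant number of outer iterations suffice, so that the APG inner-iteration count $\mathcal{O}\bigl(\sqrt{L_{\nu,x}(\rho,\theta_k)/\alpha_k}\,D_x\bigr)=\mathcal{O}\bigl((k+1)^{1+c}/\epsilon\bigr)$ sums to $\mathcal{O}(\epsilon^{-1})$ — exactly the paper's choice $r=1$, $s=0$ in its $(r,s)$ parametrization. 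Your direct verification of $U/k\le\epsilon$ (in place of the paper's argument that the lower-bound term dominates) and your flagging of the $\tfrac{1}{2}\|\lambda_0\|^2$ versus $\tfrac{1}{2\rho}\|\lambda_0\|^2$ discrepancy in $U$ are minor, valid refinements rather than a different argument.
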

\begin{IEEEproof}
{When $\theta_0=\theta^*$, we have $\Gamma_\theta=\mu=0$.}
{Moreover, according to \eqref{Rate_ineq3} and
	\eqref{eq:sufficient-cond}, $s \geq 2(1-r)$; hence, we obtain {the
		best achievable rate} for the overall iteration complexity shown
		in \eqref{complex analysis} when $r=1$ and $s=0$, which results
		in ${\cal O}(\epsilon^{-1})$ rate. Indeed, choosing $r=1$,
		   $s=0$, $\rho_o\geq 4(\norm{\lambda^*}+1)^2$ and $k_o$ such
			   that $\sqrt{\rho_o}~k_o-\Gamma_\lambda~\sqrt{k_o}-1\geq
			   0$ satisfies the new sufficient condition in
			   \eqref{eq:sufficient-cond}.}
\
\end{IEEEproof}
}
\begin{remark}
{Suppose we set $\lambda_0=\mathbf{0}$. The proof of Corollary~\ref{Corol:Constant rho no learn complex} shows that 
for $\rho_o\geq4(\norm{\lambda^*}+2)^2$, $k_o=1$ satisfies the sufficient condition, $\sqrt{\rho_o}~k_o-\Gamma_\lambda~\sqrt{k_o}-1\geq 0$. Therefore, setting $\rho=\rho_o/\epsilon$ and computing one outer iteration is suffcient. Indeed, according to \eqref{complex analysis}, implementing APG on $\min_{x\in X}\cL_\rho(x,\lambda_0;\theta^*)=f(x;\theta^*)+\frac{\rho}{2}d^2_{-\cK}(h(x;\theta^*))$ will generate an $\epsilon$-optimal and $\epsilon$-feasible solution to the original problem $\cC(\theta^*)$ within $\cO(D_x \rho_oL_{h,x}~\frac{1}{\epsilon})$ iterations.
}
\end{remark}
{
\subsection{Overall iteration complexity for increasing $\{\rho_k\}$}\label{sec:complexity-increasing} As shown in
Theorem~\ref{Theorem:Aug_increas_rho_linear_rate}, {when
	$\rho_k=\rho_0\beta^{k}$ for some $\beta>1$ and $\rho_0>0$,} we
	obtain a {geometric rate of convergence of sub-optimality
	error in terms of outer iterations of
{IPALM} in the form of $B_k/\beta^k$
{such that $B_{k+1}<B_k$ for all $k$; hence, $\sup_k B_k<+\infty$}
}}. 
While this may seem to be promising at first glance, it should be noted that as $k$
increases, {$\rho_k$ also increases geometrically; hence, $L_{\nu,x}(\rho_k,\theta_k)$, the Lipschitz constant of $\grad_x\nu_{\rho_k}(\cdot;\theta_k)$ increases at a geometric rate as well (see Lemma~\ref{Lemma: Lips_cons_grad_nu}), which adversely affects the convergence rate of APG (see Lemma~\ref{Lemma: comp_compl_APG}).} Therefore, increasing $\{\rho_k\}$ has two distinct effects: on one side, compared to constant $\rho$, increasing $\{\rho_k\}$
increases the rate of convergence of outer iteration from ${\cal O}({1\over
\sqrt{k}})$ to ${\cal O}({1\over \beta^k})$; while on the other hand, it also increases the
complexity of the inner computation. The following theorem derives the overall
iteration complexity of 
{IPALM} for the increasing penalty sequence.
{\begin{theorem}
\label{Theo: Overall Iter Comp Increas rho}
Let Assumptions~\ref{Assump: Aug_1},~\ref{Assump: learn_lin_rate} and~\ref{Assump: Lipsch_grad_nu} hold. {Let $\{x_k,\lambda_k\}$ be the primal-dual iterate sequence} generated by 
{IPALM} for the parameter sequences $\{\alpha_k\}_{k\in\integers}$ and $\{\rho_k\}_{k\in\integers}$ as defined in Theorem~\ref{Theorem:Aug_increas_rho_linear_rate}. Then,
for all $\epsilon>0$, there exists a {$k(\epsilon)\in\integers_+$} such that {$|f(x_{k};\theta^*)-f^*|\leq
\epsilon$ and $d_{-\cK}\big(h(x_k;\theta^*)\big)\leq\epsilon$} for all {$k\geq k(\epsilon)=\cO(\log(\epsilon^{-1}))$} which requires at most $O\big(\epsilon^{-1}\log(\epsilon^{-1})\big)$ proximal-gradient computations as shown in Step~1 of 
{APG}.
\end{theorem}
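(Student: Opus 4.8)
The plan is to bolt together two estimates: (a) the number of \emph{outer} iterations of Algorithm~\ref{Alg: ALM} needed until the primal iterate $x_k$ is simultaneously $\epsilon$-optimal and $\epsilon$-feasible, and (b) the number of \emph{inner} proximal-gradient steps (Step~1 of Algorithm~\ref{Alg: APG}) spent inside each outer iteration; then I would sum (b) over (a). Both pieces are already available: Theorem~\ref{Theorem:Aug_increas_rho_linear_rate} supplies the geometric outer-loop rate, while Lemma~\ref{Lemma: comp_compl_APG} together with Lemma~\ref{Lemma: Lips_cons_grad_nu} bound the inner-loop work per epoch.

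First I would pin down the outer count. By Theorem~\ref{Theorem:Aug_increas_rho_linear_rate}, for every $k\ge 0$ we have $|f(x_{k+1};\theta^*)-f^*|\le B_k/\beta^k$ and $d_{-\Kscr}\big(h(x_{k+1};\theta^*)\big)\le \beta^{-k}\big(2C'_\lambda/\rho_0+L_{h,\theta}\|\theta_0-\theta^*\|\delta^k\big)$, where $C'_\lambda$ is the constant of Lemma~\ref{Theorem: Aug_bnd_lambda_increa_rho}. Since $\delta=\beta\tau\in(0,1)$ and, by \eqref{eq:Bk} (cf.\ the discussion preceding the theorem), $B_k$ is non-increasing with $\bar B\triangleq\sup_k B_k=B_0<\infty$, both right-hand sides are at most $\bar C\,\beta^{-k}$ with $\bar C\triangleq\max\{B_0,\ 2C'_\lambda/\rho_0+L_{h,\theta}\|\theta_0-\theta^*\|\}$ a constant depending only on the data. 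Hence $x_k$ is $\epsilon$-optimal and $\epsilon$-feasible as soon as $\bar C\,\beta^{-(k-1)}\le\epsilon$, i.e.\ for every $k\ge k(\epsilon)\triangleq\lceil\log_\beta(\bar C/\epsilon)\rceil+1=\cO\big(\log(\epsilon^{-1})\big)$.

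Next I would account for the inner work at epoch $k$. Running Algorithm~\ref{Alg: APG} warm-started at $x_k$, Lemma~\ref{Lemma: comp_compl_APG} yields an $\alpha_k$-optimal $x_{k+1}$ after at most $T_k\triangleq\sqrt{2L_{\nu,x}(\rho_k,\theta_k)/\alpha_k}\;\|x_k-x_{k+1}^*\|$ proximal-gradient steps, and $\|x_k-x_{k+1}^*\|\le 2D_x$ because $X\subseteq\cB(\mathbf{0},D_x)$ (Remark~\ref{rem:Lipschitz-h}). Now Lemma~\ref{Lemma: Lips_cons_grad_nu} gives $L_{\nu,x}(\rho_k,\theta_k)\le L_{p,x}+\rho_0\beta^kL_{h,x}^2=\cO(\beta^k)$, while the prescribed schedule gives $\alpha_k^{-1}=\alpha_0^{-1}(k+1)^{2(1+c)}\beta^k$; multiplying and taking the square root, $T_k=\cO\big((k+1)^{1+c}\beta^k\big)$. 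Summing over the $k(\epsilon)$ epochs needed, $\sum_{k=0}^{k(\epsilon)-1}T_k$ is a polynomially weighted geometric series with ratio $\beta>1$, so up to a factor depending only on $\beta$ and $c$ it is dominated by its last term: $\sum_{k=0}^{k(\epsilon)-1}T_k=\cO\big(k(\epsilon)^{1+c}\,\beta^{k(\epsilon)}\big)$. Since $\beta^{k(\epsilon)}\le\beta^2\bar C/\epsilon=\cO(\epsilon^{-1})$ and $k(\epsilon)=\cO(\log(\epsilon^{-1}))$, the total is $\cO\big(\epsilon^{-1}(\log(\epsilon^{-1}))^{1+c}\big)$, which is the asserted $\cO\big(\epsilon^{-1}\log(\epsilon^{-1})\big)$ up to the (arbitrarily small) power $c>0$ in the $\alpha_k$-schedule.

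The delicate step — everything else being routine substitution — is the bookkeeping in this geometric sum: one must verify that the geometric growth $\beta^k$ embedded in $T_k$ genuinely swamps the polynomial correction $(k+1)^{1+c}$ forced by the inexactness schedule, so that the whole inner cost has the order of the \emph{final} epoch alone, and, simultaneously, that choosing $k(\epsilon)$ to be the smallest admissible index keeps $\beta^{k(\epsilon)}=\Theta(\epsilon^{-1})$ rather than some larger power of $\epsilon^{-1}$. This is precisely the trade-off flagged before the theorem: taking $\rho_k=\rho_0\beta^k$ boosts the outer rate from $\cO(1/\sqrt{k})$ to $\cO(\beta^{-k})$ but simultaneously blows up $L_{\nu,x}(\rho_k,\theta_k)$, and hence the per-epoch APG cost, at the same geometric rate; the estimate above shows the two effects cancel to leave only $\epsilon^{-1}$ (times a polylogarithmic factor) in the overall count — a strict improvement over the $\cO(\epsilon^{-4})$ bound (with learning) obtained for the constant-penalty scheme in Section~\ref{sec:complexity-constant}.
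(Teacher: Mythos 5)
Your proposal is correct and follows essentially the same route as the paper's proof: the geometric outer rate from Theorem~\ref{Theorem:Aug_increas_rho_linear_rate} with a uniform constant bounding $B_k$ gives $k(\epsilon)=\cO(\log(\epsilon^{-1}))$, and the per-epoch APG cost $\cO((k+1)^{1+c}\beta^k)$ from Lemmas~\ref{Lemma: comp_compl_APG} and~\ref{Lemma: Lips_cons_grad_nu} is summed as a polynomially weighted geometric series dominated by its last term. Your explicit observation that the resulting bound carries an extra $(\log(\epsilon^{-1}))^{c}$ factor is consistent with the paper, which absorbs it into the $\tilde{\cO}$ notation.
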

\begin{IEEEproof}
{Suppose $\{\alpha_k\}_{k\in\integers}$ and $\{\rho_k\}_{k\in\integers}$ chosen as in Theorem~\ref{Theorem:Aug_increas_rho_linear_rate}. The inequality $(2C'_\lambda+\max\{\|\lambda^*\|,~1\}\big)^2\geq 2C'_\lambda$ together with \eqref{rate-bound} and \eqref{eq:Bk}  
implies for all $k\geq 0$ \vspace*{-5mm}

{\
\begin{align*}
& {\max\left\{|f(x_{k+1};\theta^*)-f^*|,\ d_{-\Kscr}\big(
		h(x_{k+1};\theta^*)\big)\right\}}{\beta^k} \\
\leq & {1\over \rho_0}\big(2C'_\lambda+\max\{\|\lambda^*\|,~1\}\big)^2+\frac{\alpha_0}{(k+1)^{2(1+c)}}\\
& + \rho_0\left(L_{h,\theta}\norm{\theta_0-\theta^*}\delta^k +{\tfrac{1}{\rho_0}\max\big\{\tfrac{L_{f,\theta}}{L_{h,\theta}},~1\big\}}\right)^2\triangleq\bar{B}_k.
\end{align*}}%

\noindent Moreover, $0<\bar{B}_k\leq \bar{B}_0$ for all $k\geq 0$. Therefore, for
any given $\epsilon>0$,
	Theorem~\ref{Theorem:Aug_increas_rho_linear_rate} implies that it
	takes at most $k(\epsilon)=\log_{\beta} \big({\bar{B}_0 \over
			\epsilon}\big)$ outer iterations to achieve
	$\epsilon$-optimal and $\epsilon$-feasible primal solution.}
	According to Step~1 in 
{IPALM}, we need to ensure
	$\alpha_k$ level accuracy {in function values} at iteration $k$
	of the outer loop; {hence, according to Lemma~\ref{Lemma:
		comp_compl_APG}, starting from $x_k$,} we need 
		$\sqrt{{\tfrac{2}{\alpha_k}{L_{\nu,x}(\rho_k,\theta_k)}}}~\|{x_k-x_{k+1}^*}\|$
{inner iterations, where $x_{k+1}^*$ is an 
optimal solution to
	\eqref{eq:inner-problem}.} {
Since $\|x-x'\| \leq 2D_x$ for any $x,x' \in X$ (see Remark~\ref{rem:Lipschitz-h}), the number of inner iterations within the $k$-th outer loop can bounded as} \vspace*{-2mm}

{\
 \begin{align*}
& \quad \sqrt{{\tfrac{2}{\alpha_k}{L_{\nu,x}(\rho_k,\theta_k)}}}~\|{x_k-x_{k+1}^*}\|\\
&\leq \beta^k (k+1)^{1+c}  \underbrace{\sqrt{
	\tfrac{2}{\alpha_0}\big(\tfrac{L_{p,x}}{\beta^k } + {\rho_0}L_{h,x}^2\big)}~D_x}_{\triangleq \ M_k},\quad \forall k\geq 0.
 \end{align*}}%
Clearly,
	$0<M_k\leq M_0$ for all $k\geq 0$; hence, the 
number of inner iterations to obtain $\epsilon$ accuracy can be bounded as
{\small
 \begin{align*}
\sum_{k=0}^{\log_{\beta} \big({\bar{B}_0 \over \epsilon}\big)}
M_0\beta^{k} (k+1)^{1+c}
\leq
M_0 \big(\log_{\beta} \big(\tfrac{\bar{B}_0}{\epsilon}\big)+1\big)^{1+c} \tfrac{ \beta }{\beta-1} \big(\tfrac{\bar{B}_0}{\epsilon}\big).
 \end{align*}}%
 \
 \end{IEEEproof}
 {In following remark, we note that when $\theta^*$ is known, i.e., when $\theta_0=\theta^*$, then the order of overall iteration complexity of 
 {IPALM} remains unchanged, i.e., $\tilde{O}\Big(\epsilon^{-1} \log(\epsilon^{-1})\Big)$ as in the case when learning is involved. However, there is a reduction in the $\cO(1)$ constant.
 \begin{remark}
 Under the 
 premise of Theorem~\ref{Theo: Overall Iter
	 Comp Increas rho}, when $\theta_0=\theta^*$ in 
{IPALM}, the bounds in \eqref{rate-bound} can be modified as follows:\\
$|f(x_{k+1},\theta^*)-f^*| \leq \frac{1}{\beta^k}\left[\tfrac{1}{\rho_0}(2C'_\lambda+\norm{\lambda^*})^2+\alpha_0\right]$
and $d_{-\Kscr}\left( h(x_{k+1};\theta^*) \right) \leq  {1\over \beta^k}~{2C'_\lambda \over \rho_0}$;
as in Theorem~\ref{Theo: Overall Iter Comp Increas rho}, the 
number of inner iterations to obtain $\epsilon$ accuracy can be bounded 
by \vspace*{-3mm}
{\small
\begin{align*}
& \quad \sum_{k=0}^{\log_{\beta} \big({B' \over \epsilon}\big)}
M'\beta^{k} (k+1)^{1+c}\\
& \leq M' \big(\log_{\beta} \big(\tfrac{B'}{\epsilon}\big)+1\big)^{1+c} \tfrac{ \beta }{\beta-1}
\big(\tfrac{B'}{\epsilon}\big)\
 = \tilde{\cal O }\big(\epsilon^{-1} \log(\epsilon^{-1})\big),
\end{align*}}%
where $B'=\tfrac{1}{\rho_0}\big(2C'_\lambda+\max\{\|\lambda^*\|,~1\}\big)^2+\alpha_0$,
	in which 
{the constant {$C'_\lambda$}} (see Lemma~\ref{Theorem: Aug_bnd_lambda_increa_rho}) reduces to $C'_\lambda =\sum_{i=0}^\infty\sqrt{2\rho_i\alpha_i}+\|\lambda_0-\lambda^*\|$, and $M'=\sqrt{\frac{2}{\alpha_0}(L_{p,x}+\rho_0\norm{A(\theta^*)}^2)}$.
 \end{remark}}
 \vspace*{-2mm}
\section{Numerical Results}\label{sec:numerical}
 \vspace*{-1mm}
We now present 
the {\it misspecified} portfolio optimization {problem} 
and {examine} the empirical behavior 
of {IPALM.} 
\vspace*{-4mm} 
\subsection{Problem description}\label{sec:51}
Here {we describe
the 
computational and learning problems}.

\noindent {\bf Computational Problem:} We consider the Markowitz portfolio
optimization problem~\cite{JOFI:JOFI1525}, where
$\{\mathcal{R}_i\}_{i=1}^n$ denote the {\it random} returns for $n$
financial assets. {We assume} that the joint distribution of aggregated
	return (given by
		  $\mathcal{R}=[\mathcal{R}_i]_{i=1}^n$) is a multivariate
		  normal distribution, ${\cal N}(\mu^0,\Sigma^0)$, with mean vector
		  $\R^n\ni\mu^o\triangleq \mathbb{E}[\mathcal{R}]$ and covariance matrix
		  $\R^{n\times
			  n}\ni\Sigma^o\triangleq
			  \mathbb{E}[(\mathcal{R}-\mu^o)^\top(\mathcal{R}-\mu^o)]$.
			  We assume that $\Sigma^o=[\sigma_{ij}]_{1\leq i,j\leq n}$
			  is \emph{positive definite}, implying that there are no
			  redundant assets in our collection. Suppose
			  $x_i\in\R$ denotes the proportion of asset $i$ in the
			  portfolio held throughout the given period. Hence,
			  $x=[x_i]_{i=1}^n\in \mathbb{R}^n$ such that $\sum_{i=1}^n
			  x_i=1$ and $x_i\geq 0$ for all $i = 1,\hdots,n$
			  corresponds to a portfolio with {\it no short selling}.
			  Practitioners often use additional constraints to reduce
{\it sector risk} by grouping together investments in securities of a
sector and setting a limit on the exposure to this
sector~\cite{9780511753886}. Suppose there are $s$ sectors and $m_j$ is
the maximum proportion of the portfolio that {may} be invested in sector
$j$ for $j=1,\ldots,s$. Let $I_j\subset\{1,\ldots,n\}$ be the set of
indices corresponding to assets belonging to sector $j$ for
$j=1,\ldots,s$. Note that the same asset {may} belong to
{multiple} sectors; hence, 
$\{I_j\}_{j=1}^s$ is {\it not necessarily} a
partition. These sector constraints {given by} $\sum_{i\in I_j}
x_i\leq m_j, \hbox{ for } j=1,\ldots,s$ {can be compactly
	written as}
$Ax\leq b$, where
$b\triangleq [m_1,\hdots,m_s]^\top$ and 
$A_{ji}=1$ if asset $i$ belongs to sector $j$, $0$ otherwise. 
As per Markowitz theory, the optimal portfolio is derived by solving
{\small
\begin{align}
\big({\cal C}(\Sigma)\big):\
\min_{x\in\R^n} \ \left\{\tfrac{1}{2} x^\top\Sigma x-\k \mu^\top x:\ Ax\leq
b,\ x\in X\right\},
\end{align}}%
{where $\k>0$ is a trade-off parameter {between the expected
	portfolio return and the risk (captured by the variance),} $\mu$ and $\Sigma$ {are
	estimates of $\mu^o$ and $\Sigma^o$, respectively}; and  
$X \triangleq \{x\in\R^n:\ \sum_{i=1}^n x_i=1,\ x\geq 0\}$}.
{A 
lower value of $\k$ 
leads to a ``risk averse" portfolio while
larger values  correspond to ``risk seeking" ones.}
{One may solve} 
${\cal C}(\Sigma)$ {by} 
constrained optimization techniques when Euclidean projections onto the
polyhedral set, defined by $Ax\leq b$ and $x\in X$, cannot be computed
efficiently. Note that when $\{I_j\}_{j=1}^s$ is not a partition, i.e.,
	$I_j\cap I_k\neq \emptyset$ for some $1\leq j\neq k\leq s$,
{it would not be efficient to} compute projections at each iteration. Hence, one may overcome {this challenge} 
	by relaxing 
$Ax\leq b$, and adopting an augmented Lagrangian scheme {-- to verify Assumption~1.iii, it is enough to note that $\grad^2 f(x;\Sigma^*)=\Sigma^*\succ\mathbf{0}$ and $\grad f(x;\cdot)$ is Lipschitz continuous for all $x\in X$ with $L_{F,\theta}=1$; hence, Theorem~\ref{thm:pseudoLipschitz} implies Assumption~1.iii.}

If the {\it true} values {of the parameters} $\mu^o$ and $\Sigma^o$ are known, then
the Markowitz
problem is just a convex quadratic optimization problem over a polyhedral set. However, knowing the
true values of $\mu^o$ and $\Sigma^o$ often cannot be taken for granted.
In fact, even the estimation of these parameters is generally {not an easy task}.
In this section, we consider a setting where true $\mu^o$ vector is
specified, i.e., $\mu=\mu^o$, and the true covariance matrix $\Sigma^o$
is {\it unknown}; but, it can be computed as the optimal solution to a
suitably defined {\it learning problem}.

\noindent {\bf Learning Problem:} Given sample returns for $n$ assets
{with} sample size $p$ for each asset, let $S =
(s_{ij})_{1\leq i,j \leq n}\in\reals^{n\times n}$ denote the sample covariance matrix. In
practice, 
$p\ll n$, {implying} that the number of
assets is far greater than the sample size. Since $p < n$, $S$ cannot
be positive definite; on the other hand, $\Sigma^o\succ
\mathbf{0}\in\R^{n\times n}$. Hence, instead of using $S$ as our true
covariance estimator, we consider the sparse covariance selection (SCS)
	problem, proposed in~\cite{SCS_Xue}, as our learning problem and
		is defined below:
\begin{align*}
\Sigma^*\triangleq\argmin_{\Sigma\in
	\mathbb{S}^n}\left\{\tfrac{1}{2}\|\Sigma-S\|_F^2+\upsilon|\Sigma|_1:\ \Sigma
	\succeq \epsilon I\right\},
\end{align*}
where $\upsilon$ and $\epsilon$ are positive regularization parameters,
	  $\mathbb{S}^n$ denotes the set of $n\times n$ symmetric matrices,
	  $\|\cdot\|_F$ is the Frobenius norm, $|\Sigma|_1$ is the $\ell_1$
	  norm of the vector formed by all off-diagonal elements of $\Sigma$, 
and $\Sigma \succeq \epsilon I$ implies that all eigenvalues of $\Sigma$ are
	  greater than equal to $\epsilon>0$. Notice that the constraint
guarantees that the estimate $\Sigma^*$ is positive
	  definite and the $\ell_1$ regularization term in the objective
	  promotes sparsity in $\Sigma^*$. Therefore, the optimal solution
	  $\Sigma^*$ will satisfy our full-rank assumption on the covariance
	  matrix. Lack of such properties may lead to  undesirable
	  under-estimation of risk in the high dimensional Markowitz problem
	  and also may cause the corresponding optimization problem to be
	  ill defined~\cite{SCS_Xue}. Hence, we assume $\Sigma^*$ can
	  be safely used to approximate $\Sigma^o$.  We choose to solve the SCS learning problem using an ADMM algorithm.
To apply this scheme, we adopt a variant of the formulation 
in~\cite{SCS_Xue}: 
\begin{align}
\tag{SCS}\
\min_{\Sigma,\Phi\in \mathbb{S}^n}\{1_Q(\Sigma)+\tfrac{1}{2}\|\Sigma-S\|_F^2+\upsilon|\Phi|_1:\ \Sigma=\Phi\}
\vspace*{-3mm}
\end{align}
where $1_Q$ denotes the indicator function of $Q \triangleq \{\Sigma \in
\mathbb{S}^n:\quad \Sigma \succeq \epsilon I \}$. Now, (SCS) {allows for developing
	an efficient} ADMM scheme. Let
$\gamma:\mathbb{S}^n\rightarrow \R$ such that $\dom(\gamma)=Q$ and
$\gamma(\Sigma)=\tfrac{1}{2}\|\Sigma-S\|_F^2$. Since $\gamma$ is
strongly convex with a Lipschitz continuous gradient over its
domain, then the ADMM algorithm generates a sequence
$\{\Sigma_k\}$,
guaranteed to converge at a linear rate to the optimal
	solution $\Sigma^*$~\cite{Dang_Yin_ADMM_Str_Cnv}, i.e.,
	$\|\Sigma_k-\Sigma^*\|\leq \tau^k \|\Sigma_0-\Sigma^*\|$, for some
	$\tau\in(0,1)$. Hence, Assumption~\ref{Assump: learn_lin_rate} is
	satisfied.

\noindent {\bf Problem parameters:} Suppose there are $n=1500$ available assets from $s=10$ sectors. 
{The learning problem is constructed by 
	generating} the true
mean return $\mu^o$ and covariance matrix
$\Sigma^o=[\sigma_{ij}]_{1\leq i,j\leq n}$ based on the following rules:
$\mu^o$ is generated from a uniform distribution over the hyperbox
$[-1,1]^n\in\R^n$ while
	$\sigma_{ij}=\max\{1-(|i-j|/10),0\}$. Next, we generate $p$
	i.i.d. samples of random returns $\{\mathcal{R}_t\}_{t=1}^p$ from
	{${\cal N}(\mu^0,\Sigma^0)$},
	where the sample size is set to $p={n\over 2}$.
	Sample returns are then used to calculate the sample covariance matrix
	$S$. 
Since $\mu^0$ is known, $\Sigma^*$ is the solution to
	the learning problem (SCS)
	with $\upsilon=0.4$. Consequently, the optimal portfolio, $x^*\in
	X$, is the solution to ${\cal C}(\Sigma^*)$, where $\k= 0.1$.\vspace*{-2mm}
\subsection{Empirical analysis of performance}\label{sec:53}
{In this
subsection, we {study} constant
and increasing penalty parameter schemes, {and} conclude with a
discussion on how the simultaneous schemes compare with
their sequential counterparts. In all tables, CPU times are reported in
\emph{seconds}. Recall that at iteration $k$, given $\alpha_k, \rho_k, x_k,$
and $\theta_k$, starting from $x_{k}\in X$, 
{APG} is employed to find an $\alpha_k$-optimal solution $x_{k+1}\in X$ via inner loop iterations. Lemma~\ref{Lemma: comp_compl_APG} provides a bound on the number of APG iterations to obtain such an $x_{k+1} \in X$; 
specifically, no more than $\sqrt{2L^k_{\nu,x} / \alpha_k}~\|x_{k}-x^*_{k+1}\|$ APG iterations are required where $x_{k+1}^* \in \argmin_{x\in X} {\cal
	L}_{\rho_k}(x,\lambda_k;\Sigma_k)$ and $L^k_{\nu,x}\triangleq \sigma_{\max}(\Sigma_k)+\rho_k \sigma^2_{\max}(A)$
denotes the Lipschitz constant of 
{$\grad_x {\cal L}_{\rho_k}(x,\lambda_k;\Sigma_k)$}.
Hence, this can be achieved in practice by proceeding through {$T_k$ APG iterations,} where $T_k\triangleq
\sqrt{8L^k_{\nu,x}/\alpha_0}~{D_x}{(k+1)}^{1+c}$, implying
$\alpha_k$-optimality. Suppose given a tolerance $\epsilon>0$ for suboptimality
and infeasibility, 
{IPALM} requires $K$ outer iterations to terminate. In
all tables below, we use $\texttt{iter}_\mathrm{L}\triangleq\sum_{k=1}^KT_k$ to
denote the \emph{cumulative} number of APG iterations until termination over
$K$ outer iterations. That said, to evaluate the tightness of
$\texttt{iter}_\mathrm{L}$, we also directly check the $\alpha_k$-optimality
condition in Step~1 of 
{IPALM} to stop APG iterations earlier
(at \emph{most} $T_k$ iterations needed). This can be done by computing the
optimal value, $g_{\rho_k}(\lambda_k;\theta_k)$, for the $k$-th subproblem to
evaluate the suboptimality for each inner (APG) iterate computed within $k$-th
outer loop; we use $\texttt{iter}_\mathrm{\alpha}$ to denote the
\emph{cumulative} number of APG iterations over all outer iterations as a
result of checking $\alpha_k$ condition \emph{directly} -- note that the number
of outer iterations may be different. Clearly,
$\texttt{iter}_\mathrm{\alpha}\leq \texttt{iter}_\mathrm{L}$. Although, this
\emph{cannot} be implemented in practice, still the gap
$\texttt{gap}\triangleq\texttt{iter}_\mathrm{L}-\texttt{iter}_\mathrm{\alpha}$
provides with valuable information: if it is large, it hints that there is a
room for improvement on the cumulative number of APG iterations using other
sufficient conditions implying $\alpha_k$-optimality, e.g., based on
subgradient norm~\cite{aybat2013augmented}.} {Next we define the error metrics that are reported in the tables below: for suboptimality $\texttt{s}(\cdot) \triangleq \frac{|f(\cdot;\Sigma^*)-f^*|}{|f^*|}$, for infeasibility $\texttt{infs}(\cdot) \triangleq d_{\R^m_-}(A\cdot-b)$ and for learning error $\texttt{le}(\cdot) \triangleq {\|\cdot -\Sigma^*\|}/{\|\Sigma^*\|}$.}
\\[1mm]
{\bf a. Constant penalty parameter $\rho$.} {In the first set of
	experiments, we assume that $\Sigma^*$ is known, implying {a
{\it perfectly} specified problem}, and investigate the performance
{IPALM} using 
		{$\{\alpha_k\}$} as stated in Theorem~\ref{Theo: overall complex const
			rho}.  Recall that, given $\epsilon$, Theorem~\ref{Theo:
				overall complex const rho} and
				Corollary~\ref{Corol:Constant rho no learn complex}
{prescribe the penalty parameter} $\rho={\rho_0\over \epsilon}$ and sequence
		of inexactness $\{\alpha_k\}$, where $\alpha_k
		={\alpha_0k^{-2(1+c)}}$ 
		to obtain the best
		overall iteration complexity of $O( {1 \over \epsilon})$
		{and}
  $\alpha_0$ {satisfies} $\sum_{k=0}^\infty \sqrt{\alpha_k}
	={1\over \sqrt{2\rho}}$. We choose $\rho_0=1$ and $c=1$e-$3$. Table~\ref{Table1} details the sub-optimality, infeasibility and
	the 
	effort to obtain an $\epsilon$-optimal and $\epsilon-$feasible solution for various values of $\epsilon$ when $\Sigma^*$ is available. Next, we compare the results in Table~\ref{Table1} with those
	obtained by {implementing 
{IPALM} using the learning sequence $\{\Sigma_k\}$
for misspecified parameter $\Sigma^*$}.

\begin{table}[h]
\tiny
\centering
\caption{Solution quality and performance statistics: Constant $\rho$ and
	known $\Sigma^*$.}
\begin{tabular}{|c|c|c|c|c|c|c|c|}
\hline
\multicolumn{1}{|c|}{$\epsilon$} &{$\texttt{s}(\bar{x}_K)$}&
{$\texttt{infs}(\bar{x}_K)$}&\multicolumn{1}{c|}{\begin{tabular}[c]{@{}c@{}}
	$K$ \end{tabular}}  & $\texttt{iter}_\mathrm{\alpha}$ &
\multicolumn{1}{c|}{\begin{tabular}[c]{@{}c@{}}
	$\texttt{iter}_\mathrm{L}$ \end{tabular}} & {\begin{tabular}[c]{@{}c@{}}{CPU
		time}\\
	\end{tabular}}\\ \hline
$1$e-$1$&$8.2$e-$2$& $1.1$e-$3$ & $4$ & $15$ &$35$& $17$\\ \hline
 $1$e-$2$&$7.3$e-$3$&$4.5$e-$4$&$5$&$28$ & $88$&$43$\\ \hline
 $1$e-$3$& $1.6$e-$4$&$3.5$e-$4$ & $5$&$64$ &$157$& $77$\\ \hline
$1$e-$4$&$4.3$e-$5$&$7.5$e-$5$&$5$& $117$ &$372$ &$188$\\ \hline
\end{tabular}
\label{Table1}
\vspace*{2mm}
\caption{Solution quality and performance statistics: Constant $\rho$ and
	misspecified $\Sigma^*$}
{\tiny
\begin{tabular}{|c|c|c|c|p{2.5mm}|p{4.5mm}|p{4.5mm}|c|c|c|}
\hline
\multirow{2}{*}{$\epsilon$} &
\multirow{2}{*}{$\texttt{s}(\bar{x}_K)$} &
\multirow{2}{*}{{$\texttt{le}(\Sigma_K)$}} &
\multirow{2}{*}{{$\texttt{infs}(\bar{x}_K)$}} &
\multirow{2}{*}{ $K$
} &
		\multirow{2}{*}{$\texttt{iter}_\mathrm{\alpha}$} & \multirow{2}{*}{$\texttt{iter}_\mathrm{L}$}
&\multicolumn{2}{c|}{CPU time} \\\cline{8-9}
                  &                   &                   &                   &      &             &                   &    learn &     opt.    \\ \hline
$1$e-$1$  &$8.6$e-$2$& $4.6$e-$1$ &$1.2$e-$3$  & $4$    & $12$&$24$&$97$&$12$\\ \hline
$1$e-$2$&$8.8$e-$3$&$3.3$e-$1$ &$6.5$e-$5$   & $5$  &$19$&$65$&$129$&$35$\\ \hline
$1$e-$3$ &$9.9$e-$4$&$5.1$e-$2$ &$3.8$e-$5$ & $16$&$327$&$1247$&$388$&$630$ \\ \hline
$1$e-$4$ &$9.7$e-$5$&$9.4$e-$3$&$2.7$e-$6$& $47$& $4083$&$15052$&$1175$&$7533$ \\ \hline
\end{tabular}}
\label{Table2}
\end{table}
Table~\ref{Table2}
compares the CPU time spent {in}
		computation versus learning. 
While our theoretical bound requires 
${\cal O}(\epsilon^{-4})$ 
APG iterations, the
				empirical behavior is far better, suggesting that the
				bound obtained in Theorem~\ref{Theo: overall complex
					const rho} is {loose and requires} further
					study.} {Furthermore},
	the overall effort
in the misspecified regime is significantly larger, {which} is not
surprising, since Table ~\ref{Table1} does not include the effort to
provide an exact $\Sigma^*$. The comparison of running times provided in
Table~\ref{Table2} suggests that {the effort for learning} is by no
means modest. {We also note that $\texttt{iter}_\mathrm{L}$ may be significantly larger
	than $\texttt{iter}_\mathrm{\alpha}$, suggesting potential improvement using other termination conditions implying $\alpha_k$-optimality as in~\cite{aybat2013augmented}.
} In Figure~\ref{Fig:Primal_subopt_scaled}({left}), we provide a graphical
	representation of how the empirically observed primal suboptimality
		error changes with $K$, the number of outer iterations when
		$\epsilon=1\texttt{e}$-$2$. This graph is overlaid by the
		theoretical bound based on
	Theorem~\ref{Theorem: Aug_prim_low_bnd_cons_rho}.
Figure~\ref{Fig:Primal_subopt_scaled}({right}) displays the
corresponding primal infeasibility and the associated theoretical bound obtained in
Theorem~\ref{Theorem:Aug_prim_infeas_cons_rho}. 
\begin{figure}[htbp]
  \centering
    \includegraphics[width=1.72in]{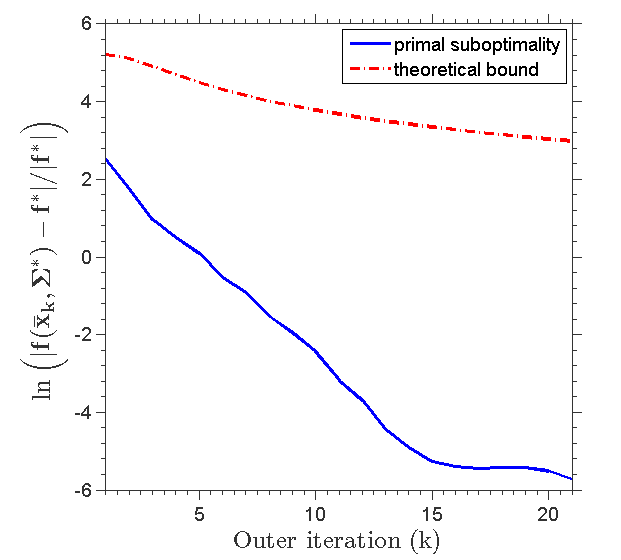}
    \includegraphics[width=1.72in]{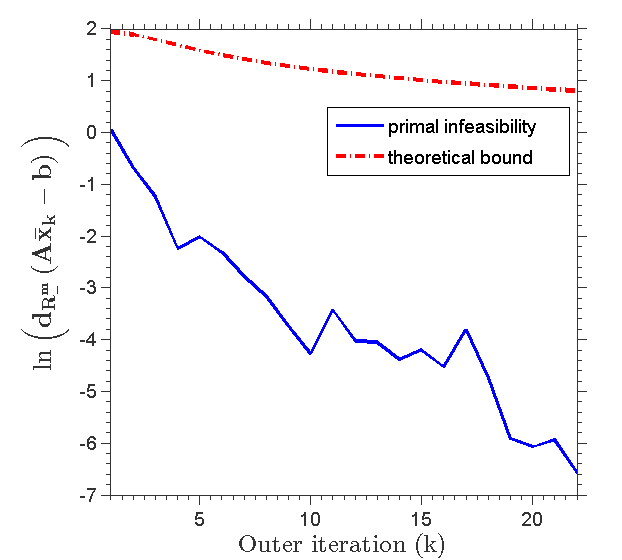}
 \caption{Empirical error vs theoretical bound using constant $\rho$:
	 (left) Primal suboptimality $\ln\left(|f(\bar{x}_k;\Sigma^*)-f^*|/|f^*|\right)$; and (right) Primal
		 infeasiblity
		 $\ln\left(d_{\mathbb{R}^m_-}\left(A\bar{x}_k-b\right)\right)$.
}
       \label{Fig:Primal_subopt_scaled}
\end{figure}

\noindent {\bf b. Increasing penalty parameter sequence $\{\rho_k\}$.}
Next, we examine the 
performance of 
{IPALM} for an increasing sequence of penalty parameters,
		$\{\rho_k\}$. To achieve the overall iteration complexity of ${\cal
			O}(\epsilon^{-1} \log(\epsilon^{-1}))$,
		Theorem~\ref{Theorem:Aug_increas_rho_linear_rate} suggests
			using sequences $\rho_k$ and $\alpha_k$ where $\rho_k=\rho_0 \beta^k$ and
$\alpha_k=k^{-2(1+c)}\beta^{-k}$,
where $\beta \tau<1$ and $\tau$ is such that $\|\Sigma_k-\Sigma^*\|\leq
\tau^k \|\Sigma_0-\Sigma^*\|$. We set $c=1$e-$3$ and $\beta=1.05$, based on the
calculated value of $\tau=0.91$.
For various values of $\epsilon$,
Tables~\ref{Table_3} and~\ref{Table_4} display the numerical
	results for known and misspecified $\Sigma^*$,
respectively. We begin by noting that the
overall complexity in terms of inner iterations
is not significantly larger 
than the iteration number observed with known $\Sigma^*$, providing empirical support for the theoretical findings of
	Theorem~\ref{Theo: Overall Iter Comp Increas rho} and Remark 4.2.
	Figure~\ref{Fig:Primal_infeas_increas_rho}({left}) depicts how the
		empirically observed  primal
	suboptimality error changes with $K$ when
	$\epsilon=1$e-$2$. By overlaying the trajectory derived from the
	non-asymptotic upper bound which diminishes at a linear rate as derived in
	Theorem~\ref{Theorem:Aug_increas_rho_linear_rate}, it is seen that
	the numerics support the theoretical findings.
In addition, Figure~\ref{Fig:Primal_infeas_increas_rho}({right}) displays the
corresponding primal infeasibility and the associated theoretical bound obtained in
Theorem~\ref{Theorem:Aug_increas_rho_linear_rate}.  Note that when $\Sigma^*$ is known, as shown in
	Corollary~\ref{Corol:Constant rho no learn complex}, choosing a
		constant $\rho$ results in ${ \cal O}(1/\epsilon)$ overall
		complexity. While in theory, this is preferable to employing an
			increasing sequence  $\{\rho_k\}$ which has a larger
			complexity of ${\cal O}\left(\epsilon^{-1}
					\log(\epsilon^{-1})\right)$, the constant $\rho$
				version requires careful estimation of problem
				parameters and $\rho$ based on the choice
			$\epsilon$. In contrast, when $\rho_k$ is an increasing
			sequence, the choice of $\beta$ and $\rho_0$ is independent
			of problem parameters, a distinct advantage of the
				increasing penalty parameter scheme.
\begin{table}[htbp]
\renewcommand{\arraystretch}{1.1}
\centering
{\tiny
\caption{Solution quality and performance statistics: Increasing $\rho_k$
	and known $\Sigma^*$
}
\label{Table_3}
\begin{tabular}{|c|c|c|c|c|c|c|}
\hline
\multicolumn{1}{|c|}{$\epsilon$} &{$\texttt{s}(x_K)$} &
{$\texttt{infs}(x_K)$} &\multicolumn{1}{c|}{ $K$
}  &$\texttt{iter}_\mathrm{\alpha}$ &
\multicolumn{1}{c|}{\begin{tabular}[c]{@{}c@{}}{$\texttt{iter}_\mathrm{L}$}\end{tabular}} & CPU time \\
\hline
$1$e-$1$ &$7.9$e-$2$&$1.4$e-$2$  &$5$  & $7$ &$7$ &$4$\\ \hline
$1$e-$2$&$8.0$e-$3$&$4.3$e-$3$ & $9$ & $24$ &$28$&$15$\\ \hline
 $1$e-$3$& $5.3$e-$4$&$5.4$e-$4$ & $15$&$60$ &$97$ & $49$\\ \hline
$1$e-$4$&$9.8$e-$5$ &$1.3$e-$4$& $26$& $224$& $415$ & $201$\\ \hline
\end{tabular}}%
\vspace*{2mm}
{\tiny
\caption{Solution quality and performance statistics: Increasing $\rho_k$ and misspecified $\Sigma^*$}
\label{Table_4}
\begin{tabular}{|c|c|c|c|p{2mm}|p{4.5mm}|p{4.5mm}|c|c|c|}
\hline
\multirow{2}{*}{$\epsilon$} &
\multirow{2}{*}{{$s(x_K)$}} &
\multirow{2}{*}{{$\texttt{le}(\Sigma_K)$}} &
\multirow{2}{*}{{$\texttt{infs}(x_K)$}} &
\multirow{2}{*}{ $K$
} &
		\multirow{2}{*}{$\texttt{iter}_\mathrm{\alpha}$} &
		\multirow{2}{*}{$\texttt{iter}_\mathrm{L}$} &\multicolumn{2}{c|}{CPU time} \\\cline{8-9}
                  &                   &                   &                   &      &             &                   &    learn &     opt.     \\ \hline
$1$e-$1$ &$9.3$e-$2$& $3.3$e-$1$ &$1.0$e-$2$  & $5$  &  $7$&$7$&$121$&$3$ \\ \hline
$1$e-$2$ &$9.5$e-$3$&$9.1$e-$2$ &$2.4$e-$3$   & $11$ & $30$&$40$&$278$&$23$\\ \hline
$1$e-$3$ &$8.3$e-$4$&$4.1$e-$2$ &$4.9$e-$4$ & $19$&  $82$&$153$&$477$&$79$ \\ \hline
$1$e-$4$ &$9.7$e-$5$&$8.8$e-$3$ &$5.1$e-$6$& $49$& $632$&$3488$&$1221$&$1735$\\ \hline
\end{tabular}}
\end{table}
\begin{figure}[htbp]
  \centering
    \includegraphics[width=1.72in,height=1.52in]{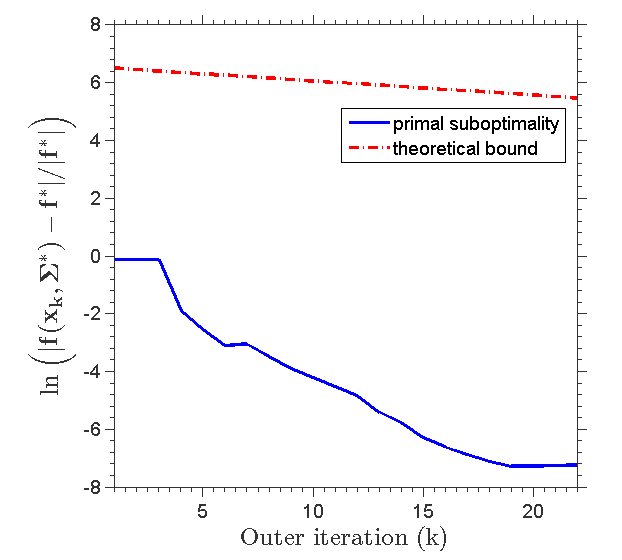}
    \includegraphics[width=1.72in,height=1.52in]{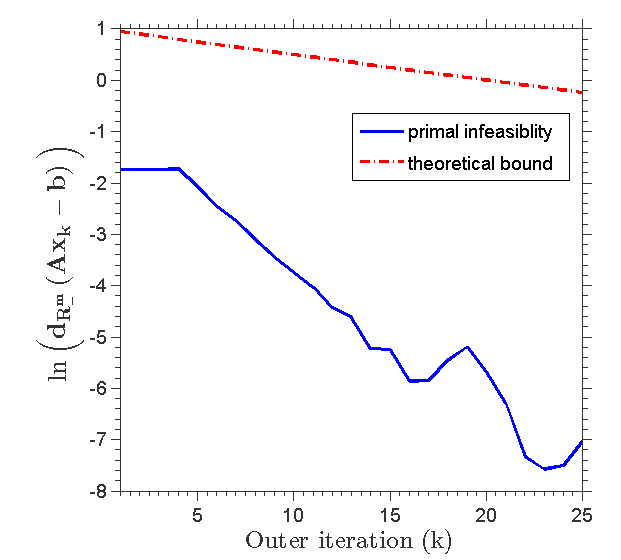}
 \caption{Empirical error vs theoretical bound using increasing $\rho_k$:
	 (left) Primal relative suboptimality $\ln\left(|f(x_k;\Sigma^*)-f^*|/|f^*|\right)$; and (right) Primal
		 infeasiblity
		 $\ln\left(d_{\mathbb{R}^m_-}\left(Ax_k-b\right)\right)$.
}
       \label{Fig:Primal_infeas_increas_rho}
       \vspace*{-10mm}
\end{figure}
\begin{figure}[hbtp]
    \vspace*{-5mm}
  \centering
    \includegraphics[width=2.5in,height=1.5in]{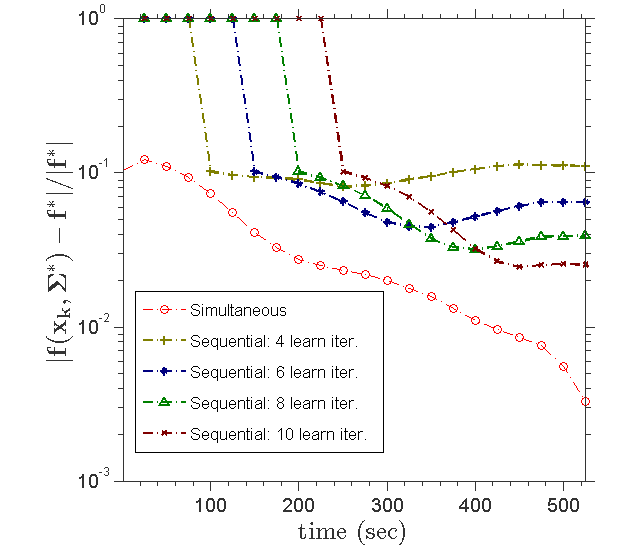}
      \caption{Simultaneous vs sequential approach -- Primal suboptimality for increasing $\rho_k$: $|f(x_k;\Sigma^*)-f^*|$ in $\log$-scale}
          \label{Fig:Simul_vs_seq}
\end{figure}
{\bf c. Sequential vs simultaneous schemes.}
Here we provide a graphical representation of the
benefits of simultaneous schemes and captures the overall effort/time in
one figure (Figure \ref{Fig:Simul_vs_seq}). To compare our proposed
scheme with standard sequential schemes, we incorporate the effort to
solve the learning problem {in sequential schemes 
 -- this possibly inexact solution is then used to resolve the misppecified problem.} For
instance, in Figure~\ref{Fig:Simul_vs_seq}, we consider 4 different
implementations of the sequential scheme, where the implementations
differ by the amount of effort (number of learning steps) employed for obtaining an approximation
to $\Sigma^*$. On the $y-$axis, we capture the sub-optimality error. Note while the sequential schemes are making an effort to get an
approximation of $\Sigma^*$, no improvement is being made in $x$.
Consequently, all of the graphs corresponding to the  sequential schemes
stay constant. Once an approximation {to $\Sigma^*$} is obtained, the sequential scheme
will obtain an approximate solution to {the misspecified problem}; but, the sub-optimality error never
diminishes to zero, since the sequential scheme never updates its $\Sigma^*$ approximation. In comparison, the simultaneous approach 
has several benefits: (i) it is
characterized by asymptotic convergence, a property that does not hold
for sequential schemes; (ii) one can provide
non-asymptotic rate bounds for the entire trajectory $\{x_k\}$; and
	(iii) when it is unclear as to the extent of accuracy required in
	solving the learning problem, sequential methods can prove to be
	quite poor while simultaneous schemes perform well.
\section{Conclusions}
This paper has been motivated by the question of resolving convex
	optimization {problems plagued by} parametric misspecification.
 We consider settings
		where this misspecification may be resolved by solving a
		suitably defined learning problem, {resulting in}
two	coupled optimization problems; of these, the first
is a misspecified optimization problem where the unknown parameters
appear both in the objective and  the constraints,
	   while the second is a learning problem that arises from having access to a learning
data set, collected a priori. One avenue for contending with
such a problem is through an inherently sequential approach
that solves the learning problem and subsequently utilizes this solution
in solving the 
{misspecified} problem. Unfortunately,
unless \emph{accurate} solutions of the learning problem
are available in a {practically reasonable} number of iterations,
sequential approaches may not be advisable {due to propagation of error}. Instead, we focus on a
\emph{simultaneous} approach that combines learning and 
optimization by adopting
inexact augmented Lagrangian (AL) scheme. Two classes of inexact AL
schemes have been investigated; {the} first uses {a} constant penalty
parameter 
while {the} second employs {an} increasing
sequence of penalty parameters. In this regard, we make the following contributions:
(i) Derivation of the convergence rate for dual optimality, primal infeasibility and primal suboptimality;
(ii) Quantification of the learning effect on the rate degradation;
(iii) Analysis of overall iteration complexity. Preliminary numerics
	suggest 
the proposed schemes perform well on a misspecified
		portfolio optimization problem while {sequential} approaches
		may perform poorly in practice.
\bibliography{ref2,ref2_revision}   
\bibliographystyle{unsrt}
\vspace*{-2cm}
\begin{IEEEbiographynophoto}{Necdet Serhat Aybat} received his B.S. in 2003
and M.S. in 2005 from Bogazici University, and Ph.D. in Operations Research
from Columbia University in 2011. Dr. Aybat joined the Department of Industrial
Engineering, Penn State University in August 2011. His current research focuses
on developing first-order algorithms for large-scale convex optimization
problems from diverse application areas, such as compressed sensing, matrix
completion, convex regression, and distributed optimization. Dr. Aybat is a
runner-up of the INFORMS Computing Society Student Paper Award in 2010, and the
recipient of the SIAM Student Paper Prize in 2011.
\end{IEEEbiographynophoto} \vspace*{-1in}
\begin{IEEEbiographynophoto}{Hesam Ahmadi} received his Ph.D. in Operations
Research from Penn State University in 2016 and currently works at Optym in
Gainseville, Florida. His research interests lie in the solution of
optimization problems plagued by misspecification with application interests in
power systems operation.
\end{IEEEbiographynophoto} \vspace*{-1in}
\begin{IEEEbiographynophoto}{Uday V. Shanbhag} has held the Gary and Sheila
Bello Chaired professorship in Industrial and Manufacturing Engineering at
Pennsylvania State University since Nov. 2017 and has been at Penn. State
University since Fall 2012, prior to which he was an assistant and subsequently
a tenured associate professor at University of Illinois at Urbana-Champaign
(between 2006--2012). His interests lie in the analysis and solution of
optimization problems, variational inequality problems, and Nash games
complicated by nonsmoothness and uncertainty. His research awards include the
A.W. Tucker prize from the Mathematical Optimization Society (2006), the best
paper award from Computational Optimization and Applications (2007, jointly
with W. Murray), and the Best theory paper award from the Winter Simulation
Conference in 2013 (with A. Nedi\'{c} and F. Yousefian). He holds undergraduate
and Masters degrees from IIT, Mumbai and MIT, Cambridge respectively and  a
Ph.D. in management science and engineering (Operations Research) from Stanford
University in 2006.
\end{IEEEbiographynophoto}
\appendices
\section{Proof of Lemma \ref{lip-pi}}
\label{sec:app-LipschitzI}
{Given $\lambda\in\cK^*$, for any $\theta\in\Theta$, the definition of $\pi_{\rho}(\lambda;\theta)$ in~\eqref{def-pi} implies that $\tfrac{1}{\rho} (\lambda - \pi_{\rho}(\lambda;{\theta})) \in \partial_w g_0(w;{\theta})|_{w=\pi_{\rho}(\lambda;{\theta})}$.}
Consequently, from the assumption in \eqref{lip-sub}, there exists 
$\xi \in {\kappa_d(\lambda)} \|\theta -\theta^*\|~\cB(\mathbf{0},1)$ such that
{
\begin{align*}
\tfrac{1}{\rho} (\lambda - \pi_{\rho}(\lambda;{\theta}))
&\in \partial_{w} g_0(w;{\theta})|_{w=\pi_{\rho}(\lambda;{\theta})}\\
&\subseteq \xi+ \partial_{w} g_0(w;\theta^*)|_{w=\pi_{\rho}(\lambda;\theta)}.
\end{align*}}%
Therefore, 
$\tfrac{1}{\rho} (\lambda - \pi_{\rho}(\lambda;{\theta})) -\xi \in \partial_w g_0(w;\theta^*)|_{w=\pi_{\rho}(\lambda;\theta)}$.
By the monotonicity of the map $\lambda\mapsto\partial_\lambda g_0(\lambda;\theta)$ for every $\theta \in \Theta$ and for nonnegative $\rho$, we have
{\small $0 \leq \fprod{\tfrac{1}{\rho} \big(\lambda -
		\pi_{\rho}(\lambda;{\theta})) -\xi -\tfrac{1}{\rho} (\lambda -
		\pi_{\rho}(\lambda;\theta^*)\big),~\pi_{\rho}(\lambda;{\theta})-\pi_{\rho}(\lambda;\theta^*)}$}.
Hence, we obtain $\frac{1}{\rho} \|\pi_{\rho}(\lambda;\theta)-\pi_{\rho}(\lambda;{\theta^*})\| \leq \|\xi\|$ by rearranging the terms.
Moreover, $\xi \in {\kappa_d(\lambda)} \|{\theta} -\theta^*\| \cB(\mathbf{0},1)$ implies that
$\|\xi\| \leq {\kappa_d(\lambda)} \|{\theta} -\theta^*\|$, which leads to \eqref{eq:lipschitz-MoreauMap}.\
\section{Proof of Lemma~\ref{lip-g0}}
\label{sec:app-LipschitzII}
Recall that
$g_0(\lambda;\theta) \triangleq \min_{x \in X} \ {\cal L}_0(x,\lambda;\theta)$.
Then {for any $\lambda\in\cK^*$ and $\theta\in\Theta$, Danskin's theorem implies}
$\partial_{\lambda} g_0(\lambda;\theta) = \mbox{conv} \left\{
{h(x;\theta)}: x \in X^*(\lambda;\theta)\right\}$. Hence,
\begin{align}
 \partial_{\lambda} g_0(\lambda;\theta) = \mbox{conv} \left\{
{h\left(X^*(\lambda;\theta);\theta\right)}\right\}=h(X^*(\lambda;\theta);\theta),\label{g_0_eq_h}
\end{align}
due to the {image
	$h(X^*(\lambda;\theta);\theta)$ being a convex set} -- since {$h(x;\theta)$ is an affine map in $x$ for every $\theta \in
	\Theta$} and $X^*(\lambda;\theta)$ is a convex set for any
	$\theta$ and $\lambda$. 
From {Assumption~\ref{Assump: Aug_1}.iii}, there exists {some $\kappa(\lambda)>0$ such that for all $\theta\in\Theta$,}
\begin{align}
\label{pseudo-L-X}
{X^* (\lambda;\theta) \subseteq X^*
(\lambda;\theta^*) + \kappa(\lambda) \| \theta - \theta^* \|~\cB(\mathbf{0},1).}
\end{align}
Since {$h(\cdot;\theta)$ is an affine map, from \eqref{pseudo-L-X}, for every $\theta \in
\Theta$},  
{\small
\begin{align}
{h(X^*(\lambda;\theta){;\theta})  \subseteq h\big(X^*
		(\lambda;\theta^*) 
		+ \kappa(\lambda) \| \theta - \theta^* \|~\cB(\mathbf{0},1){;\theta}\big).}
\label{lhs-final}
\end{align}}%
Define {$\bar{h}(x;\theta)\triangleq h(x;\theta)-b(\theta)=A(\theta)x$}, as the linear part of $h(x;\theta)$. Then, the image of the Minkowski sum of sets 
can be written as follows:
{\
\begin{align}
& \quad \bar{h}\left(X^* (\lambda;\theta^*) + \kappa(\lambda) \| \theta -
		\theta^* \|~\cB(\mathbf{0},1);{\theta}\right)	\nonumber  \\
& = \left\{ {\bar{h}(x;\theta)} + z:\
\begin{array}{l}
  x \in X^*(\lambda;\theta^*),\\
   z \in \kappa(\lambda) \| \theta-\theta^*\|~{\bar{h}\big(\cB(\mathbf{0},1);\theta\big)}
\end{array}
\right\} \cr
& \subseteq \bar{h}\big(X^*(\lambda;\theta^*){;\theta}\big) + \kappa(\lambda) {\norm{A(\theta)}} \| \theta-\theta^*\|~\cB(\mathbf{0},1) \cr
& \subseteq \bar{h}\big(X^*(\lambda;\theta^*){;\theta^*}\big) + {L_A D_x
\|\theta-\theta^*\|~\cB(\mathbf{0},1)}\nonumber \\
& \mbox{ } +  \kappa(\lambda) L_{h,x}  \| \theta-\theta^*\|~\cB(\mathbf{0},1),
\label{rhs}
\end{align}}%
{where \eqref{rhs} follows from Assumption~\ref{Assump: Aug_1}.i and the definitions of $\bar{h}$ and $L_{h,x}$.}
From $h(x;\theta)\triangleq\bar{h}(x;\theta) + b(\theta)$, it follows that
{\begin{align*}
& h\left(X^* (\lambda;\theta^*) + \kappa(\lambda) \| \theta -
		\theta^* \|~\cB(\mathbf{0},1);{\theta}\right)	\notag  \\
\subseteq & h(X^*(\lambda;\theta^*){;\theta^*} ) + {(L_A D_x+\kappa(\lambda) L_{h,x})
\|\theta-\theta^*\|~\cB(\mathbf{0},1)}  \nonumber \\
& +b(\theta)-b(\theta^*). 
\end{align*}}%
Since $b(\theta)$ is Lipschitz continuous in $\theta$ with constant $L_b$ {and $L_{h,\theta}=L_A D_x+L_b$}, 
\begin{align*}
&h \left(X^* (\lambda;\theta^*) + \kappa(\lambda) \| \theta -
		\theta^* \|~\cB(\mathbf{0},1);{\theta}\right)	\notag  \\
\subseteq & h(X^*(\lambda;\theta^*){;\theta^*} ) + {(L_{h,\theta}+\kappa(\lambda) L_{h,x})
\|\theta-\theta^*\|~\cB(\mathbf{0},1)}; 
\end{align*}
{hence, the result follows from \eqref{g_0_eq_h} and \eqref{lhs-final}.} 

{

}
\section{Proof of Theorem~\ref{thm:pseudoLipschitz}}
\label{sec:app-LipschitzThm}
First, note that since $X$ is compact, due to Weierstrass' theorem,
$X^*(\lambda;\theta)\neq\emptyset$ {for all $\lambda\in\cK^*$ and $\theta\in\Theta$.}
Moreover, for $\theta=\theta^*$, strict convexity of $f$ implies that
\begin{equation}
\label{eq:singleton}
{X^*(\lambda;\theta^*)=\{x^*(\lambda;\theta^*)\}}
\end{equation}
is a singleton. Throughout the sequel, let $F(x,\theta)\triangleq \grad_x f(x;\theta)$ and 
we define $F:\tilde{X} \times \Lambda \times \Theta\rightarrow\reals^n$ such that
\begin{align}
F(x,\lambda,\theta)\triangleq F(x,\theta)+A(\theta)^\top\lambda=\nabla_x \mathcal{L}_0(x,\lambda;\theta).
\end{align}

{Given any $\lambda\in\cK^*$ and $\theta\in\Theta$, since $f(\cdot;\theta)$ is convex and differentiable, computing $X^*(\lambda;\theta)$ in \eqref{lag-prob} is equivalent to solving} a variational inequality problem ${\rm VI}(X,F(\cdot,\lambda,\theta))$, of which solution set is denoted by ${\rm SOL}(X,F(\cdot,\lambda,\theta))$, {i.e., $\bar{x}\in {\rm SOL}(X,F(\cdot,\lambda,\theta))$ if and only if $\bar{x}\in X$ such that $\fprod{F(\sa{\bar{x}},\lambda,\theta),~x-\bar{x}}\geq 0$ for all $x\in X$}.
Since $f(\cdot;\theta)$ is assumed to be convex for all $\theta \in \Theta$, $F(\cdot,\lambda,\theta)$ is a monotone map 
on $X$ for all $\lambda\in\cK^*$ and $\theta \in \Theta$.

Next, \sa{given an arbitrary compact set $\Lambda\subset\cK^*$,} we prove a key lemma that will help us prove that \eqref{eq:main-assumption} holds for $\lambda\in\Lambda$.
\begin{lemma}
\label{lem:key}
Under the premise of Theorem~\ref{thm:pseudoLipschitz}, \sa{given a compact set $\Lambda\subset\cK^*$}, there exists $c>0$ such that for all $\lambda\in\Lambda$ and $q\in\reals^n$,
\begin{align}
\norm{x^*(\lambda;\theta^*)-\argmin_{x\in X}\{{\cL_0}(x,\lambda;\theta^*)+{q^\top x}\}}\leq c\norm{q}. 
\end{align}
\end{lemma}
\begin{proof}
For $q=\mathbf{0}$, the claim trivially holds for any $\lambda\in\Lambda$. Hence, in the rest of the proof, suppose that $q\neq\mathbf{0}$. For the sake of contradiction, assume that there exist $\{\lambda_k\}\subset\Lambda$, $\{q_k\}\subset\reals^n\setminus\{\mathbf{0}\}$ such that
\begin{align}
\label{eq:contradiction}
\norm{x_k-x^*_k}>k \norm{q_k},\quad \forall k\geq 1,
\end{align}
where $x_k\triangleq \argmin_{x\in X}\{{\cL_0}(x,\lambda_k;\theta^*)+{q_k^\top x}\}$ and $x_k^*\triangleq x^*(\lambda_k;\theta^*)$ -- see \eqref{eq:singleton}.

To simplify the notation used in the analysis below, let $F(x)\triangleq
	F(x,\theta^*)$ and, 
	we also define
$F_k:\tilde{X}\rightarrow\reals^n$ such that $F_k(x)\triangleq
F(x,{\lambda_k},\theta^*)=F(x)+A(\theta^*)^\top\lambda_k$ for $k\geq 1$.

Since $q_k\neq \mathbf{0}$, from strict convexity of $f(\cdot;\theta^*)$, it follows that $x_k\neq x_k^*$ for $k\geq 1$. Thus, \eqref{eq:contradiction} implies that
\begin{align}
\lim_{k\rightarrow\infty}\frac{\norm{q_k}}{\norm{x_k-x_k^*}}=0.
\end{align}
Moreover, since $x_k\in X$ and $x_k^*\in X$, it follows from \eqref{eq:contradiction} and compactness of $X$ that $\lim_{k\rightarrow\infty} q_k=\mathbf{0}$. Define $v_k { \ \triangleq \ } (x_k-x_k^*)/\norm{x_k-x_k^*}$ for all $k\geq 1$. Clearly $\norm{v_k}=1$ and {since $\{v_k\}$ is a bounded sequence}, there exists a  subsequence $\cI_1\subset\integers_+$ such that $\lim_{k\in\cI_1}v_k=\bar{v}\in\reals^n$ -- note that $\norm{\bar{v}}=1$; hence, $\bar{v}\neq \mathbf{0}$. Since both $\Lambda$ and $X$ are compact sets, there exists $\cI_2\subset\cI_1$ such that for some $\bar{\lambda}\in\Lambda$ and $\bar{x},x_{\infty}\in X$, we have $\bar{\lambda}=\lim_{k\in\cI_2}\lambda_k$, $\bar{x}=\lim_{k\in\cI_2}x_k$, and $x_{\infty}=\lim_{k\in\cI_2}x^*_k$. Thus, $x^*_k\in\mathrm{SOL}(X, F_k)$ and $x_k\in\mathrm{SOL}(X, F_k+q_k)$ imply
{\small
\begin{align}
& 
\fprod{x-x_k^*,~F_k(x_k^*)}\geq 0,\quad\forall~x\in X, \label{eq:sol_1}\\
&
\fprod{x-x_k,~F_k(x_k)+q_k}\geq 0,\quad\forall~x\in X, \label{eq:sol_2}
\end{align}}%
respectively, for $k\geq 1$. {Let $\bar{F}:\tilde{X}\rightarrow\reals^n$ be such that
$\bar{F}(x)\triangleq F(x,{\bar{\lambda}},\theta^*)$.} Therefore, {since $\lim_{k \to\infty} q_k = {\bf 0}$}, taking the limit along the subsequence $\cI_2$, we conclude that $\fprod{x-x_\infty,~\bar{F}(x_\infty)}\geq 0$ and $\fprod{x-\bar{x},~\bar{F}(\bar{x})}\geq 0$ hold for all $x\in X$; thus, $\bar{x}=x_\infty$ due to strict convexity of $f(\cdot;\theta^*)$.  Moreover, from \eqref{eq:sol_1} and \eqref{eq:sol_2},
{\small
\begin{align*}
0
&\leq \fprod{x_k^*-x_k,~F_k(x_k)+q_k},\nonumber\\
&= \fprod{x_k^*-x_k,~F_k(x_k^*)+\grad^2 f(x_k^*;\theta^*) (x_k-x_k^*)}\nonumber\\
&\quad+\fprod{x_k^*-x_k,~q_k+o(\norm{x_k-x_k^*})},\nonumber\\
&\leq \fprod{x_k^*-x_k,~\grad^2 f(x_k^*;\theta^*) (x_k-x_k^*)+q_k+o(\norm{x_k-x_k^*})}. 
\end{align*}}%
Now, dividing both sides of the above inequality by ${\norm{x_k-x_k^*}^2}$ and using the definition of $v_k$, we get
{\small
\begin{align*}
0
&\leq \fprod{-v_k,~\grad^2 f(x_k^*;\theta^*)v_k+\frac{q_k}{{\|x_k-x_k^*\|}}+\frac{o(\norm{x_k-x_k^*})}{{\|x_k-x_k^*\|}}}. 
\end{align*}}%
Taking the limit along the subsequence $\cI_2$, we get $\bar{v}^\top\grad^2 f(\bar{x};\theta^*)\bar{v}\leq 0$, which clearly contradicts our assumption {that $f(\cdot;\theta^*)$ is strictly convex on $X$.}
\end{proof}
 Now we are ready to prove the main claim in Theorem~\ref{thm:pseudoLipschitz}.
\sa{\bf Proof of Theorem~\ref{thm:pseudoLipschitz}.}
Given $\lambda\in\Lambda$ and $\theta\in\Theta$, let $\bar{x}\in X^*(\lambda;\theta)=\argmin_{x\in X} \lambda_0(x,\lambda;\theta)$ -- note that $X^*(\lambda;\theta)$ is \emph{not necessarily} a singleton.
\sa{It is easy to check using the first-order optimality conditions that}
\begin{align*}
\bar{x}\in\argmin_{x\in X}\{{\L_0}(x,\lambda;\theta^*)+\fprod{F(\bar{x},\lambda,\theta)-F(\bar{x},\lambda,\theta^*),~x}\}.
\end{align*}
\sa{Thus, given a compact set $\Lambda\subset\cK^*$, invoking Lemma~\ref{lem:key} 
for $q = F(\bar{x},\lambda,\theta)-F(\bar{x},\lambda,\theta^*)$ implies that there exists $c>0$ such that for all $\lambda\in\Lambda$,}
{\small
\begin{eqnarray*}
\lefteqn{\norm{\bar{x}-x^*(\lambda;\theta^*)}}\\
&&\leq c~\norm{F(\bar{x},\lambda,\theta)-F(\bar{x},\lambda,\theta^*)},\\
&&\leq c~\norm{\grad_x f(\bar{x};\theta)-\grad_x f(\bar{x};\theta^*)+(A(\theta)-A(\theta^*)){^\top}\lambda},\\
&&\leq c~(L_{F,\theta}+L_A\norm{\lambda})\norm{\theta-\theta^*},\quad \sa{\forall~\theta\in\Theta.}
\end{eqnarray*}}%
Since $\bar{x}$ is an arbitrary element of $X^*(\lambda;\theta)$, the claim in \eqref{eq:main-assumption} holds for $\kappa(\lambda)\triangleq c~(L_{F,\theta}+L_A\norm{\lambda})$ which is clearly continuous on $\Lambda$.\qed

\section{Proof of Theorem~\ref{Theorem:Aug_increas_rho_linear_rate} (dual convergence)}
\label{sec:app-dual}
For now, let $\lambda^*\in\Lambda^*$ be some dual optimal solution to the dual of $\cC(\theta^*)$. From \eqref{def-pi}, for all $k\geq 1$, we have
{\small
\begin{align}
g_0(\pi_{\rho_k}(\lambda_k;\theta^*);\theta^*)\geq g_0(\lambda^*;\theta^*)-\frac{1}{2\rho_k}\norm{\lambda^*-\lambda_k}^2;
\end{align}}%
hence, $\liminf_k g_0(\pi_{\rho_k}(\lambda_k;\theta^*);\theta^*)\geq f^*$ because $g_0(\lambda^*;\theta^*)=f^*$, $\rho_k\nearrow+\infty$ and $\{\lambda_k\}$ is a bounded sequence -- see Lemma~\ref{Theorem: Aug_bnd_lambda_increa_rho}. Moreover, from~\eqref{eq:lagrangian}, clearly $\pi_{\rho_k}(\lambda_k;\theta^*)\in\cK^*$; hence, from weak duality $\limsup_k g_0(\pi_{\rho_k}(\lambda_k;\theta^*);\theta^*)\leq f^*$. Thus,
{\small
\begin{align}
\lim_k g_0(\pi_{\rho_k}(\lambda_k;\theta^*);\theta^*)=f^*.
\end{align}}%
Note that writing \eqref{proof:Aug_bnd_lambda_1} for $\rho=\rho_k$, we get $\norm{\lambda_{k+1}-\pi_{\rho_k}(\lambda_k;\theta_k)}\leq\sqrt{2\rho_k\alpha_k}$ for $k\geq 1$; hence, $\lim_k \norm{\lambda_{k+1}-\pi_{\rho_k}(\lambda_k;\theta^*)}=0$ because $\sqrt{\alpha_k\rho_k}\to 0$ -- see Condition~\ref{Assump:rho_k}. From~\eqref{eq:lipschitz-MoreauMap}, we also have $\norm{\pi_{\rho_k}(\lambda_k;\theta_k)-\pi_{\rho_k}(\lambda_k;\theta^*)}\leq \rho_k \kappa_d(\lambda_k)\norm{\theta_k-\theta^*}$ for $k\geq 1$. Since $\kappa_d(\cdot)$ is continuous, $\{\lambda_k\}$ is bounded, and $\rho_k\norm{\theta_k-\theta^*}\leq \rho_0\norm{\theta_0-\theta^*}\delta^k\to 0$, we have $\lim_k \norm{\pi_{\rho_k}(\lambda_k;\theta_k)-\pi_{\rho_k}(\lambda_k;\theta^*)}=0$. Thus, from triangular inequality,
{\small
\begin{eqnarray}
\label{eq:lambda-relation-rho_k}
\lim_k\norm{\lambda_{k+1}-\pi_{\rho_k}(\lambda_k;\theta^*)}=0. 
\end{eqnarray}}%
Since $\{\lambda_k\}\subset\cK^*$ is a bounded sequence, there exist a subsequence $\cS\subset\integers_+$ 
such that $\lim_{k\in\cS}\lambda_{k}$ exists and
\begin{align}
\lim_{k\in\cS}\lambda_{k}=\lim_{k\in\cS}\pi_{\rho_{k-1}}(\lambda_{k-1};\theta^*)\in\cK^*,
\end{align}
where the second equation follow from \eqref{eq:lambda-relation-rho_k}. Moreover,
{\small
\begin{align*}
f^*&=\lim_k g_0(\pi_{\rho_{k-1}}(\lambda_{k-1};\theta^*);\theta^*)\\
&=\limsup_{k\in\cS} g_0(\pi_{\rho_{k-1}}(\lambda_{k-1};\theta^*);\theta^*)\leq g_0(\lim_{k\in\cS}\lambda_k;\theta^*)\leq f^*,
\end{align*}}%
where the first inequality follows from upper-semicontinuity of $g_0(\cdot;\theta^*)$ and the second one follows from weak duality as $\lim_{k\in\cS}\lambda_k\in\cK^*$. Therefore, $g_0(\lim_{k\in\cS}\lambda_k;\theta^*)=f^*$, i.e., $\lim_{k\in\cS}\lambda_k\in\Lambda^*$. Finally, we use the same argument we did in the proof of Theorem~\ref{Theorem:Aug_bnd_dual_sub} to conclude with dual convergence. Indeed, since \eqref{eq:SM-convergence} is true for any dual optimal point, setting $\lambda^*\triangleq\lim_{k\in\cS}\lambda_k$ within \eqref{eq:SM-convergence} implies that $\lambda^*=\lim_{k\rightarrow\infty}\bar{\lambda}_k$. To be more precise, \eqref{eq:SM-convergence} implies that for all $i\geq 1$ and $\ell\geq i+1$,
{\small
\begin{eqnarray*}
\lefteqn{\|\lambda_{\ell}-\lambda^*\|}\\ 
&&\leq \|\lambda_i-\lambda^*\| +\sum_{k\geq i}\Big(\sqrt{2\rho_k\alpha_k}+\rho_k {\kappa_d(\lambda^*)}\|\theta_{k}-\theta^*\|\Big). 
\end{eqnarray*}}%
According to Condition~\ref{Assump:rho_k}, we have $\sum_k\sqrt{\alpha_k\rho_k}<+\infty$; moreover, since we assume $\delta\triangleq\tau\beta<1$, we also have $\sum_k\rho_k \|\theta_{k}-\theta^*\|<+\infty$. Therefore, given $\epsilon>0$, there exist $N_1\in\integers_+$ such that $\sum_{k\geq N_1}\Big(\sqrt{2\rho_k\alpha_k}+\rho_k {\kappa_d(\lambda^*)}\|\theta_{k}-\theta^*\|\Big)\leq \tfrac{\epsilon}{2}$. Since $\lim_{k\in\cS}\lambda_k=\lambda^*$, given $\epsilon>0$, there exists $N_2\in\cS$ such that $N_2>N_1$ and $\|\lambda_{N_2}-\lambda^*\|\leq\tfrac{\epsilon}{2}$ for all $k\in\cS$. Thus, we conclude that $\norm{\lambda_{\ell}-\lambda^*}\leq\epsilon$ for all $\ell> N_2$, which implies $\lim_k\lambda_k=\lambda^*$.
\section{Continuity of $X^*(\cdot)$ and $f^*(\cdot)$}
\label{sec:app-set_continuity}
\sa{For each $\theta\in\Theta$, define 
\begin{align}
\label{eq:parametric_primal_problem}
X^*(\theta)=\argmin_{x\in X} \{f(x;\theta):\ A(\theta)x+b(\theta)\in-\cK\},
\end{align}
and $f^*(\theta)\triangleq f(x^*(\theta);\theta)$ for some $x^*(\theta)\in X^*(\theta)$.}

\sa{Since 
$\cK$ is closed and $X$ is a compact set, $\cS(\theta)\triangleq\{x\in X:\ A(\theta)x+b(\theta)\in-\cK\}$ is a compact set for all $\theta\in\Theta$. Next, we argue under some weak conditions that the multifunction $\cS:\Theta\rightarrow X$ is continuous on $\Theta$, i.e., both upper and lower-hemicontinuous.}

\sa{Let $\{\theta_k\}\subset \Theta$ and $x_k\in \cS(\theta_k)$ for $k\geq 0$ such that $\theta_k\to\bar{\theta}$ and $x_k\to\bar{x}$. Since $\Theta$ and $X$ are closed sets, $\bar{\theta}\in\Theta$ and $\bar{X}\in X$. Taking the limit of $A(\theta_k)x_k+b(\theta_k)\in-\cK$ implies that $A(\bar{\theta})\bar{x}+b(\bar{\theta})\in-\cK$; hence, $\bar{x}\in\cS(\bar{\theta})$. Therefore, $\cS:\Theta\rightarrow X$ is closed on $\Theta$; and since $X$ is compact, we conclude that $\cS:\Theta\rightarrow X$ is upper-hemicontinuous on $\Theta$. Moreover, according to~\cite[Theorem~4]{mackowiak2006some}, since $\Theta$ is compact and convex, if $\rm{int}~\Theta\neq\emptyset$ and {the condition in~\eqref{eq:local_condition}} holds for $\bar{\theta}=\theta^*$, then $\cS:\Theta\rightarrow X$ is lower-hemicontinuous at $\theta^*$.
\begin{eqnarray}
\label{eq:local_condition}
\lefteqn{\exists\epsilon>0\ \forall \theta\in\Theta\setminus\{\bar{\theta}\}\ \norm{\theta-\bar{\theta}}<\epsilon}\\
&& \hspace{-0.3in} \implies\ \exists t\in[0,1]\ \exists d\in \Theta\ \norm{d-\bar{\theta}}=\epsilon,\ \theta=t\bar{\theta}+(1-t) d.\nonumber
\end{eqnarray}
When $f(x;\theta)$ is continuous on $X\times \Theta$ and \eqref{eq:local_condition} holds for $\bar{\theta}=\theta^*$, we may claim from the previous sequence of arguments that $S: \Theta \to X$ is both upper and lower-hemicontinuous at $\theta^*$. Thus, invoking Berge's maximum theorem~\cite{ok2007real}, we can conclude that the function $f^*:\Theta\rightarrow \reals$ is continuous at $\theta^*$ and the multifunction $X^*:\Theta\rightarrow X$ is \emph{upper-hemicontinuous} at $\theta^*$.}

\begin{remark} \sa{A natural question is under what conditions \eqref{eq:local_condition} is expected to hold. We provide one such condition next. By \cite[Lemma~1]{mackowiak2006some}, if there exists a polytope $\cP\subset \Theta$ such that ${\rm int} \cP\neq \emptyset$ and $\theta^*\in \cP$, then \eqref{eq:local_condition} holds for all $\bar{\theta}\in\cP$ and $\cS$ is lower-hemicontinuous on $\cP$  -- hence, at $\theta^*$ as well.}
\end{remark}


%

\end{document}